\definecolor{mygreen}{RGB}{0,220,0}
\definecolor{myorange}{RGB}{255,160,0}
\newtheorem{theorem}{Theorem}[section]
\newtheorem{proposition}[theorem]{Proposition}
\newtheorem{lemma}[theorem]{Lemma}
\newtheorem{corollary}[theorem]{Corollary}
\newtheorem{question}[theorem]{Question}
\newtheorem{conjecture}[theorem]{Conjecture}
\newtheorem*{cor:pi1}{Corollary~\ref{cor:pi1}}
\newtheorem*{cor:L0c1}{Corollary~\ref{cor:L0c1}}
\newtheorem*{cor:even}{Corollary~\ref{cor:even}}
\newtheorem*{thm:connected}{Theorem~\ref{thm:connected}}
\newtheorem*{theorem:PtoLF}{Theorem~\ref{thm:PtoLF}}
\newtheorem*{theorem:WeinsteinHasPath}{Theorem~\ref{thm:WeinsteinHasPath}}
\newtheorem*{theorem:loopssmooth4viaALF}{Theorem~\ref{loopssmooth4viaALF}}
\theoremstyle{definition}
\newtheorem{definition}[theorem]{Definition}
\newtheorem{examples}[theorem]{Example}
\newtheorem{procedure}[theorem]{Procedure}
\theoremstyle{remark}
\newtheorem{remark}[theorem]{Remark}
\numberwithin{equation}{section}
\newcommand{\C}{\mathbb{C}}
\renewcommand{\phi}{\varphi}
\renewcommand{\epsilon}{\varepsilon}
\newcommand{\ob}{\mathcal{OB}}
\newcommand{\cL}{\mathcal{L}}
\renewcommand{\P}{\mathbb{P}}
\newcommand{\cptwo}{\C\P^2}
\newcommand{\cptwobar}{\overline{\C\P}\,\!^2}
\begin{document}
	\title{The contact cut graph and a Weinstein $\mathcal{L}$-invariant}
	
	\begin{abstract}
 We define and study the {\em contact cut graph} which is an analogue of Hatcher and Thurston's cut graph for contact geometry, inspired by contact Heegaard splittings~\cite{Tor00,Gir02}. We show how oriented paths in the contact cut graph correspond to Lefschetz fibrations and multisection with divides diagrams. We also give a correspondence for achiral Lefschetz fibrations. We use these correspondences to define a new invariant of Weinstein domains, the {\em Weinstein $\cL$-invariant}, that is a symplectic analogue of the Kirby-Thompson's $\cL$-invariant of smooth $4$-manifolds. We discuss the relation of Lefschetz stabilization with the Weinstein $\cL$-invariant. We present topological and geometric constraints of Weinstein domains with $\cL=0$. 
We also give two families of examples of multisections with divides that have arbitrarily large $\cL$-invariant.  
	\end{abstract}
	
	

\author[N.\ Castro]{Nickolas A. Castro}
\address{Rice University
Mathematics Department,
Houston, TX 77005-1892, USA}
\email{ncastro.math@rice.edu}\urladdr{https://nickcastromath.com}

\author[G.\ Islambouli]{Gabriel Islambouli}
\address{University of California Davis, Mathematics Department,
Davis, CA 95616, USA}
\email{islambouli@ucdavis.edu}\urladdr{https://sites.google.com/view/gabrielislambouli/}

\author[J.\ Min]{Jie Min}
\address{Department of Mathematics and Statistics,
University of Massachusetts Amherst, MA 01003-9305, USA}
\email{jmin@umass.edu}\urladdr{https://sites.google.com/site/jieminmath/home}

\author[S.\ Sakall{\i}]{S\"umeyra Sakall{\i}}
\address{Department of Mathematics and Statistics,
University of South Florida,
Tampa, FL, 33620, USA}
\email{smyrasa@gmail.com}\urladdr{https://sites.google.com/view/smyrsakalli/home}

\author[L.\ Starkston]{Laura Starkston}
\address{University of California Davis, Mathematics Department,
Davis, CA 95616, USA}
\email{lstarkston@math.ucdavis.edu}\urladdr{https://www.math.ucdavis.edu/~lstarkston/}

\author[A.\ Wu]{Angela Wu}
\address{Department of Mathematics,
Bucknell University,
Lewisburg, PA 17837,
USA}
\email{a.wu@bucknell.edu}\urladdr{https://angelamath.com}
	
	\maketitle
	

	\section{Introduction}
	
	There is a long history of using graphs associated to a surface in order to understand manifolds. Hempel utilized the curve complex to give criteria for the reducibility and weak reducibility of Heegaard splittings of 3-manifolds~\cite{Hem01}. Prior to this, Hatcher and Thurston \cite{HT20} used a related complex, which they called the cut complex, in order to show that the mapping class group is finitely presented. The key result they proved in order to produce a presentation for the mapping class group was that the cut graph is connected (and the cut complex is simply connected). Drawing on these prior works, Kirby and Thompson \cite{KT18} showed that every trisection of a smooth 4-manifold, and hence every smooth 4-manifold, could be described as a loop in the cut complex of a surface. Using this description, they defined an invariant of smooth orientable closed 4-manifolds, which roughly measured the shortest such loop corresponding to the given 4-manifold. Since their work, these ideas have been expanded to provide invariants of 4-manifolds with boundary \cite{CIMT21}, knotted surfaces in the 4-sphere and 4-ball~\cite{APTZ21,BCTT22}, and knotted surfaces in arbitrary 4-manifolds~\cite{ABGKKMP23}. 
	
	Our goal in this article is to define and study an analogue of this in the contact and symplectic setting. A cut system can encode a standard contact structure on the handlebody, provided the surface $\Sigma$ is endowed with the additional information of a dividing set $d$ which cuts $\Sigma$ into two homeomorphic surfaces with boundary. A cut system must intersect the dividing set in a particular way in order to realize a standard contact structure on the handlebody. Fixing $(\Sigma,d)$, we identify a certain subset of vertices of the cut graph which are ``contact'' as well as a subset of geometrically meaningful edges. This leads us to a contact analogue of the cut graph $CC(\Sigma,d)$ (see Section~\ref{s:ccdef} for the precise definition). Our first result is the analogue of the Hatcher-Thurston result in this contact setting.
	
	\begin{thm:connected}
		The contact cut graph $CC(\Sigma,d)$ is connected.
	\end{thm:connected}

	Our motivation for studying $CC(\Sigma,d)$ comes from an interest in defining and studying new invariants of Weinstein $4$-manifolds.
	Recently in \cite{IslambouliStarkston} the authors showed that every Weinstein 4-manifold admits a trisection-like structure called a \emph{multisection with divides}. The diagram for a multisection with divides lies on a surface $\Sigma$ with dividing set $d$ and consists of a sequence of vertices in $CC(\Sigma,d)$. 
	
	In Section~\ref{s:LF}, we develop a procedure to go from an oriented path $P$ in our contact cut graph $CC(\Sigma,d)$ to a multisection with divides diagram $\mathfrak{MD}(P)$ compatible with a Lefschetz fibration $LF(P)$ for a Weinstein domain $W(P)$.
	
	
	\begin{theorem:PtoLF}
		Let $P$ be an oriented (resp. unoriented) path in $CC(\Sigma, d)$. There exists an (achiral) allowable Lefschetz fibration $LF(P)$ with fiber $\Sigma^+$ such that $\mathfrak{MD}(P)$ is a diagram for the (achiral) multisection with divides compatible with $LF(P)$.
	\end{theorem:PtoLF}

	Next, we show that every Weinstein domain is obtained in this way.

	\begin{theorem:WeinsteinHasPath} 
		For any Weinstein domain $(W,\omega, V)$ (up to Weinstein homotopy), there exists a pair $(\Sigma,d)$ and path $P\in CC(\Sigma,d)$ such that $(W,\omega, V)=W(P)$.
	\end{theorem:WeinsteinHasPath}

	By considering unoriented paths, we get similar \emph{achiral} multisections with divides, which are no longer compatible with a global symplectic structure. Losing the symplectic structure allows us to understand all smooth 4-manifolds (in the complement of some embedded circles) through loops in a contact cut graph.
	
	\begin{theorem:loopssmooth4viaALF}
		For every closed, oriented smooth 4-manifold $X$, there exists a regular neighborhood $N$ of disjointly embedded circles such that $X\setminus N$ can be expressed as $X_C(L)$ for some unoriented loop $L$ in $CC(\Sigma, d)$.
	\end{theorem:loopssmooth4viaALF}
Here $X_C(L)$ denotes the closed 4-manifold obtained by capping off the boundary components of the $n$-section $X(L)$ associated to the loop $L$ with $\sqcup_g S^1 \times B^3$ (see Definition \ref{def:XCP}).	

	Using these correspondences, we define a new invariant of $4$-dimensional Weinstein domains, the \emph{Weinstein $\cL$-invariant}. This is a symplectic analogue of the Kirby-Thompson $\cL$-invariant of smooth $4$-manifolds. In the smooth setting, Kirby and Thompson proved that the only closed manifolds $X$ with $\cL(X)=0$ are connect sums of $\cptwo$, $\cptwobar$, $S^2\times S^2$, $S^1\times S^3$, and $S^4$~\cite[Theorem 12]{KT18}.  For Weinstein domains, there is no such decomposition of $\cL(W,\omega)=0$ domains into a finite list of basic pieces. Nevertheless, we can still prove significant constraints on their topology and geometry.
	\begin{cor:pi1}
		If $(W,\omega,V)$ is a Weinstein domain with $\cL = 0$, then $\pi_1(W)$ is a free group.
	\end{cor:pi1}
	\begin{cor:L0c1}
		If $(W,\omega,V)$ is a Weinstein domain with $\cL = 0$, then $c_1(\omega)=0$.
	\end{cor:L0c1}
	\begin{cor:even}
		If $(W,\omega,V)$ is a Weinstein domain with $\cL = 0$, then the intersection form on $H_2(W)$ is even.
	\end{cor:even}
	See Theorem~\ref{thm:L0Weinsteinhandles} for more precise constraints which imply these corollaries.
	
	We also look for Weinstein domains with arbitrarily large $\cL$-invariant. While it is difficult to consider all possible multisections with divides for a given Weinstein domain, we can get lower bounds on the length of a path compatible with a specific multisection with divides. In Section~\ref{ss:largeL}, we give two families of Weinstein domains $W_n$ and $W_n'$, with multisections with divides $\mathfrak{M}_n$ and $\mathfrak{M}_n'$ such that $\cL(\mathfrak{M}_n) = \cL(\mathfrak{M}_{n+1}')=n$. We suspect that these paths are actually minimal for the underlying Weinstein domains, not just the specific multisections with divides. See Section~\ref{ss:largeL}.

\subsection{Outline}
In Section \ref{s:Background} we review background on the classical cut graph, multisections, contact Heegaard splittings, and multisections with divides. In Section \ref{s:ccdef}, we give our definition of the contact cut graph and establish some important properties. In Section \ref{s:Conn}, we prove that our contact cut graph is connected. In Section~\ref{s:LF}, we explain how oriented paths in the contact cut graph correspond to Lefschetz fibrations and multisection with divides diagrams. We also give a correspondence for achiral Lefschetz fibrations. We use these correspondences to define a new invariant of Weinstein domains, the Weinstein $\cL$-invariant. In Section \ref{s:Stab}, we discuss how stabilization of a multisection with divides fits in with the contact cut graph and Weinstein $\cL$-invariant. In Section \ref{s:L0} we prove results about Weinstein domains with $\cL=0$, and finally in Section \ref{ss:largeL} we provide two families of examples that appear to have arbitrarily large $\cL$-invariant.

\subsection{Acknowledgements}
This project started at the Trisectors Workshop 2023: Connections with Symplectic Topology at the University of California, Davis in June, 2023, supported by NSF grant DMS 2308782. The authors would like to thank Jeff Meier, Maggie Miller, and Alex Zupan for organizing this workshop with LS and GI. The authors would also like to thank Austin Christian for helpful discussions. GI was supported by NSF DMS 1904074. LS was supported by NSF CAREER grant DMS 2042345 and a Sloan Fellowship. AW was supported by NSF grant DMS-2238131. 

\section{Background} \label{s:Background} 

The cut graph and cut complex $C(\Sigma)$ were defined by Hatcher and Thurston~\cite{HT80} for the purpose of studying the mapping class group of surfaces. The vertices correspond to cut systems: collections of curves in the surface which define a handlebody filling the surface. The edges correspond to intersection relations between two cut systems. Later Kirby and Thompson related paths in the cut complex to smooth 4-manifolds via trisections and used the length of a path in the cut complex to define the $\cL$-invariant of smooth 4-manifolds which can detect the 4-sphere among homology 4-spheres~\cite{KirTho18}. In this section, we review the relevant precise definitions which motivate our contact cut graph.

	\subsection{Classical cut graph and multisections}

	\begin{definition}
		A \emph{cut system} for a closed oriented genus $ g $ surface $ \Sigma_g $ is an unordered collection $ \{ \alpha_1,\dots,\alpha_g \} $ of $ g $ disjoint simple closed curves on $ \Sigma_g $ that cut $ \Sigma $ open into a $ 2g $-punctured sphere.
	\end{definition}

	A cut system defines a $3$-dimensional handlebody $H$ with boundary $\Sigma_g$, by specifying curves in $\Sigma_g$ which bound disks in $H$. If we take $\Sigma_g\times [0,\epsilon]$ and glue $D^2\times [-\delta,\delta]$ along a neighborhood of each curve in the cut system in $\Sigma_g\times\{\varepsilon\}$, we obtain a 3-dimensional compression body with two boundary components: $\Sigma_g\times\{0\}$ and a component homeomorphic to $S^2$ (using the condition on the cut system). Filling in $S^2$ by $B^3$ (which can be done in a unique way) we obtain the $3$-dimensional handlebody.

	\begin{figure}
	\centering
	\includegraphics[scale=0.2]{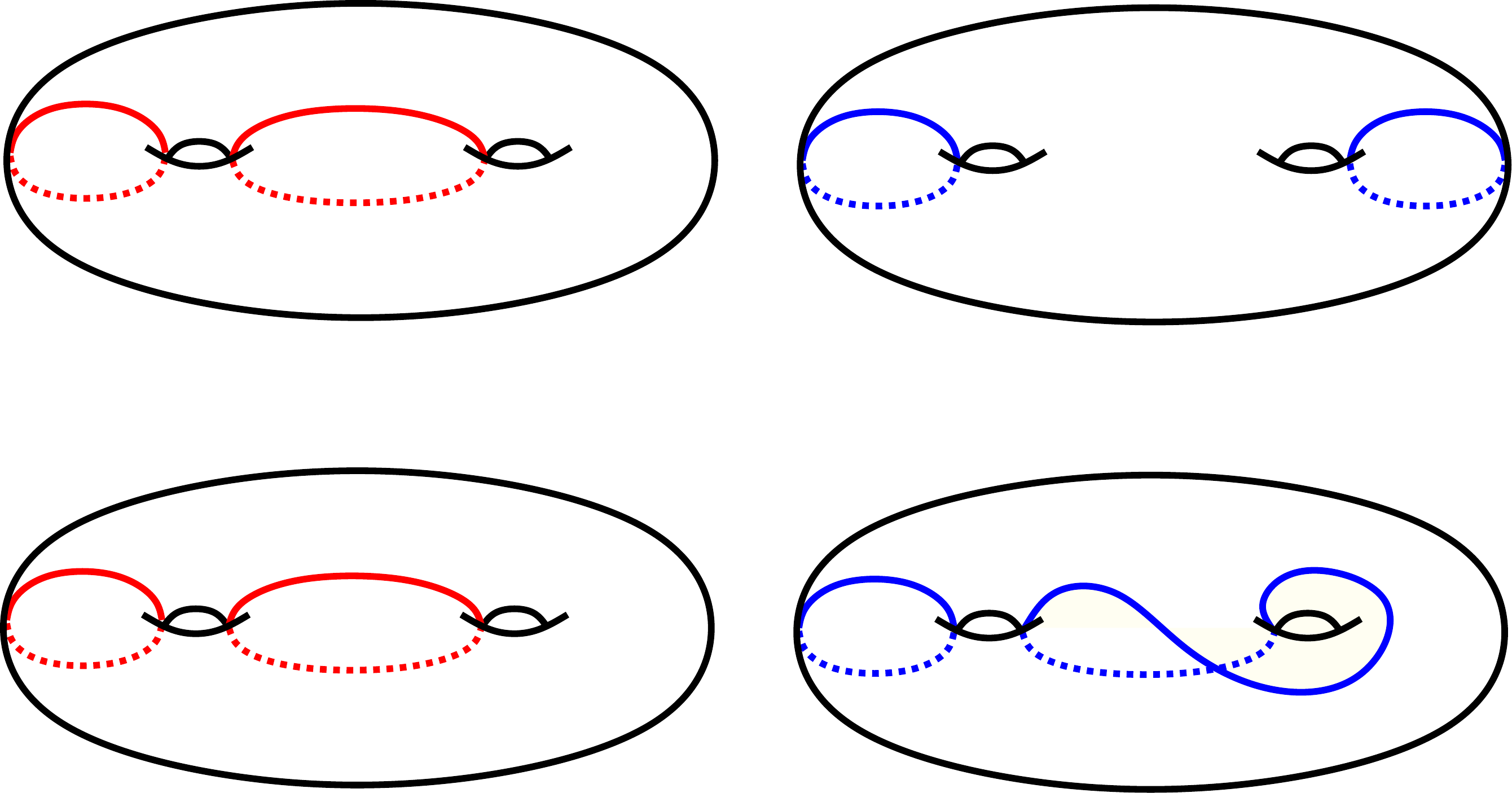}
	\caption{Top: the red and blue cut systems differ by a type 0 move. \\
	Bottom: the red and blue cut systems differ by a type 1 move.}
	\label{fig:cut_system_example}
	\end{figure}

	\begin{definition}
		Given a closed oriented genus $ g $ surface $ \Sigma_g $, the \emph{cut graph} $ C(\Sigma_g) $ is a $1$-complex, where the vertices correspond to isotopy classes of cut systems on $ \Sigma_g $. For two vertices $ v $ and $ v' $ with corresponding cut systems $ v=\{\alpha_1,\dots,\alpha_{g-1},\alpha_g \} $ and $ v'=\{ \alpha_1',\dots,\alpha_{g-1}',\alpha'_g \} $, they are connected by an \emph{edge of type $0$} if, up to reindexing, $\alpha_i=\alpha_i'$ for $i\le g-1$ and $ \alpha_g \cap \alpha_g'=\emptyset $. Similarly, they are connected by an \emph{edge of type $ 1 $} if, up to reindexing, $\alpha_i=\alpha_i'$ for $i\le g-1$ and $ \alpha_g $ intersects $ \alpha_g' $ transversely at a single point. See Figure~\ref{fig:cut_system_example} for an example.
	\end{definition}
	If two cut systems are related by a type $0$ move, their corresponding 3-dimensional handlebodies coincide, because $\alpha_g$ and $\alpha_g'$ are related by handleslides. Conversely, if two cut systems define the same handlebody, there is a sequence of type $0$ moves relating them. Type $1$ moves change the underlying handlebody.
	
	A multisection is a decomposition of a $4$-manifold into a union of sectors $X_1,\dots, X_n$ such that each $X_i \cong \natural_{k_i} S^1\times D^3$, consecutive sectors meet along $3$-dimensional handlebodies $X_i\cap X_{i+1} = H_i\cong \natural_g S^1\times D^2$, and all the sectors intersect along a surface which is the boundary of these handlebodies $X_1\cap \dots \cap X_n   = \Sigma_g = \partial H_i$ for all $i$. When the $4$-manifold has no boundary, the first and last sectors intersect along a 3-dimensional handlebody as well $X_n\cap X_1 = H_n$, whereas when the $4$-manifold has boundary, the first and last sectors only intersect along $\Sigma$, and we require $\partial X_1\setminus H_1 =: H_0$ together with $\partial X_n\setminus H_{n-1}=:H_n$ to be a Heegaard splitting of the boundary of the 4-manifold. Multisections without boundary generalize the trisections of Gay and Kirby~\cite{GayKir16}. Multisections were defined and interpreted as paths and loops in the cut graph in~\cite{IslNay20}.

	A vertex in the cut graph, $v$, represents a handlebody $H_v$. 
	If $v$ and $w$ are related by a type 0 move in $C(\Sigma_g)$, then $H_v \cup_{\Sigma_g} H_w$ is Heegaard splitting of $\#^{g} S^1 \times S^2$. We denote $X_{v,w}$ to be $\natural^{g} S^1 \times B^3$, whose boundary is identified with $H_v \cup H_w$. Similarly, if they are related by a type 1 move, then $H_v \cup_{\Sigma_g} H_w$ is Heegaard splitting of $\#^{g-1} S^1 \times S^2$, so we glue in $X_{v,w}\cong \natural^{g-1} S^1 \times B^3$.
	
	\begin{definition}\label{def:XCP}
		Let $P = (v_1, v_2... v_n)$ be a path in $C(\Sigma_g)$. The \emph{$n$-section associated to $P$} is the $4$-manifold with the decomposition $X(P) = \cup_{i=1}^{n-1} X_{v_i, v_{i+1}}$ where $X_{v_{i-1}, v_{i}}$ is glued to $X_{v_{i}, v_{i+1}}$ along $H_{v_i}$. If $v_1 = v_n$ so that $P$ is a loop, then we define $X_C(P)$ to be the closed 4-manifold obtained by capping off the boundary components of $X(P)$ with $\sqcup_g S^1 \times B^3$.
	\end{definition}
	
	\subsection{Multisections with divides and contact handlebodies}
	
	In this paper, we will look at an analogue of the cut graph which takes into account contact geometry. To set this up, we will need to define the analog of a cut system for a surface with boundary.
	\begin{definition}
	An \emph{arc system} for a genus $p$ surface $F$ with $b>0$ boundary components is a collection of $g=2p+b-1$ disjoint, properly embedded arcs $\mathcal{A} = \{a_1, \ldots, a_g\}$ such that $\overline{F\setminus \mathcal{A}}$ is a disc.
	\end{definition}
	One way to get from one arc system to another for a fixed surface with boundary is via arc slides.

	\begin{figure}
		\centering
		\includegraphics[scale=0.2]{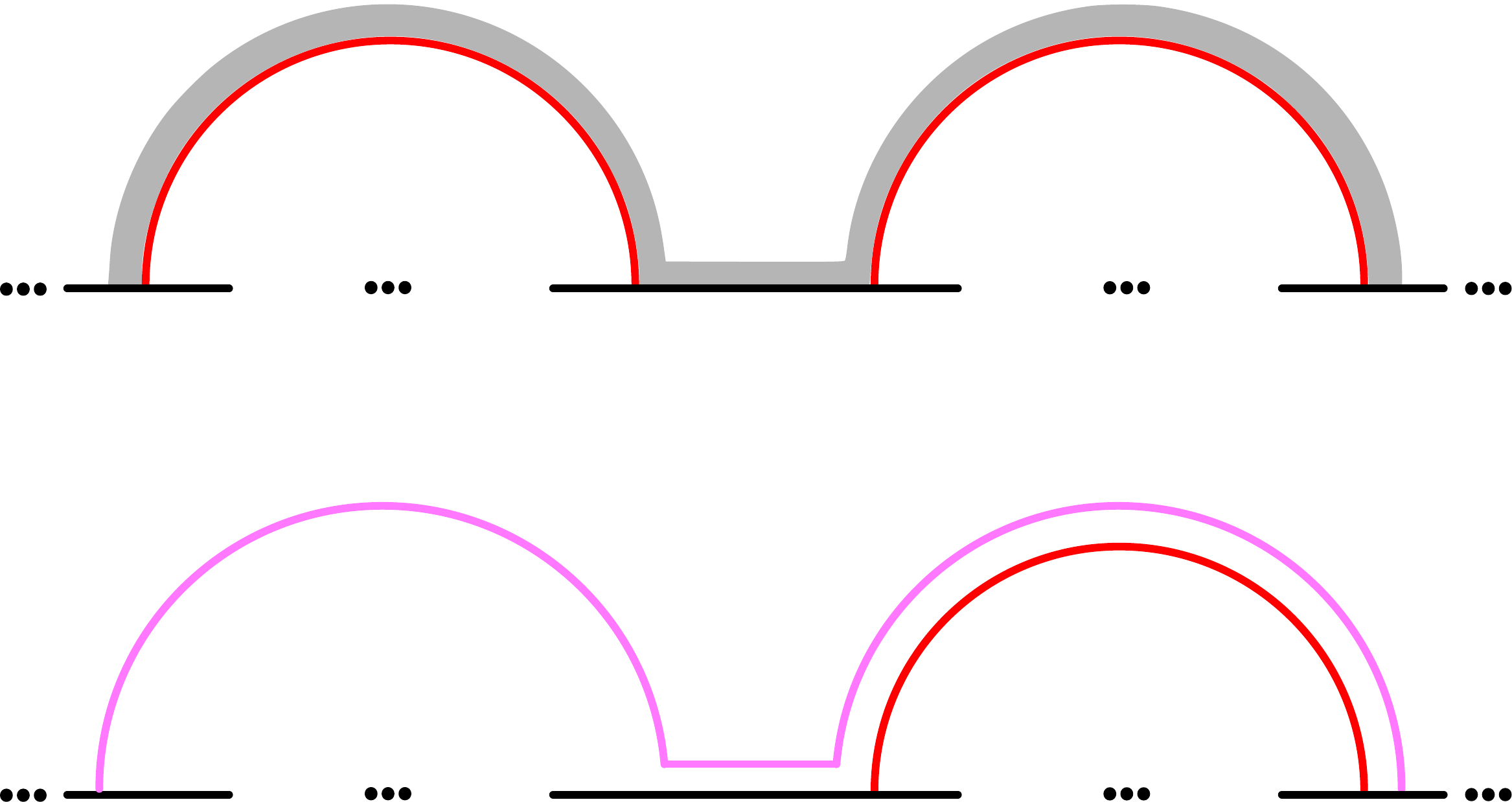}
		\caption{An arcslide.}
		\label{fig:arcslide1}
	\end{figure}
	
	\begin{definition}
		Let $\mathcal{A} = \{a_1, \ldots, a_g\}$ be a complete cut sytem of arcs for $S$. Let $\gamma$ be an embedded path contained in the boundary of $S$ starting at an end point, $a_i^+$ of $a_i$ and ending at an end point $a_j^-$ of $a_j$ and such that the interior of $\gamma$ is disjoint from $\mathcal{A}$. Let $N$ be a small regular neighborhood of $a_i\cup\gamma\cup a_j$ which is disjoint from the rest of $\mathcal{A}$. Let $a_i^-$ and $a_j^+$ be the other endpoints of $a_i$ and $a_j$ which are disjoint from $\gamma$. The \emph{arc slide} of $a_i$ over $a_j$ along $\gamma$ is a new complete cut system of arcs $\mathcal{A}'=\{a_1,\ldots, a_g\}\setminus \{a_i\} \cup \{a_i'\}$ where $a_i'$ is the component of $\partial N$ which is isotopic to the concatenation of $a_i$, $\gamma$ and $a_j$. See Figure~\ref{fig:arcslide1}.
	\end{definition}

	If $\mathcal{A}$ is an arc system, we can check that $\mathcal{A}'$ is also an arc system as follows. $\overline{F\setminus \mathcal{A}'}$ is obtained by cutting $\overline{F\setminus \mathcal{A}}$ along $a_i'$ and gluing along $a_i$. By assumption $\overline{F\setminus \mathcal{A}}$ is a disk. The subset $N$ is a subdisk cobounded by $a_i\cup \gamma\cup a_j$ and $a_i'$. Since $a_i\cup \gamma\cup a_j$ is in the boundary of $\overline{F\setminus\mathcal{A}}$, cutting along $a_i'$ creates two disks. Since each of these two disks has a copy of $a_i$ in its boundary, gluing along $a_i$ yields a disk again. See Figure~\ref{fig:arcslide2}.
	
	\begin{figure}
		\centering
		\includegraphics[scale=0.2]{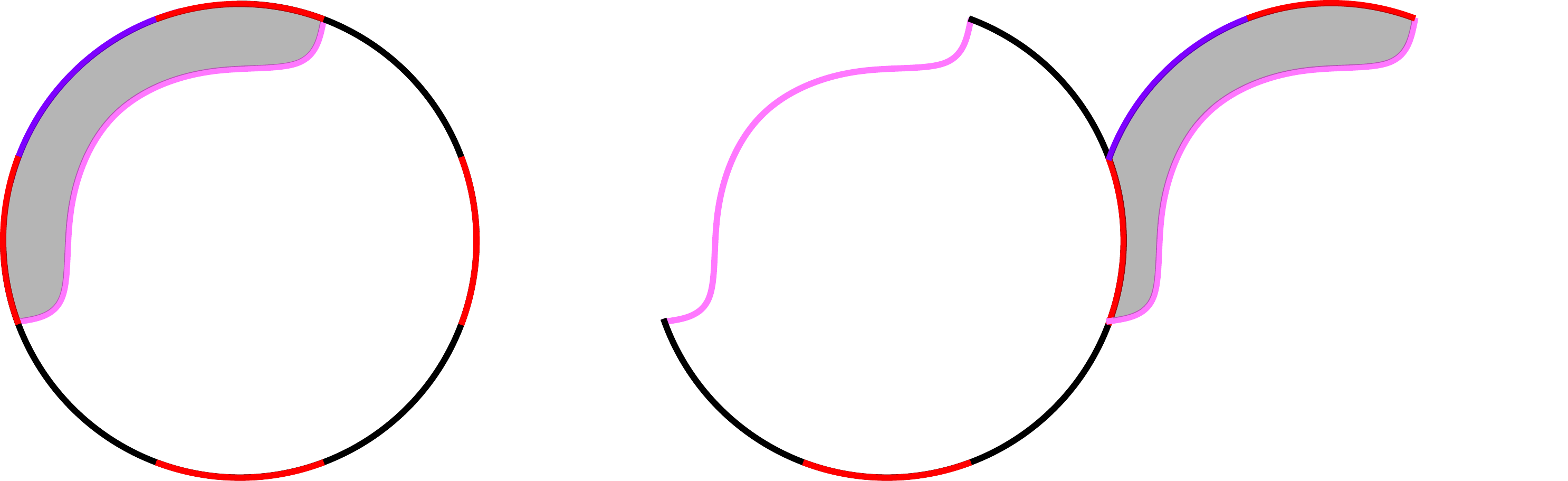}
		\caption{The result of performing an arcslide on an arc system is another arc system.}
		\label{fig:arcslide2}
	\end{figure}

	A Heegaard splitting $H_1\cup H_2$ of a contact 3-manifold $(Y,\xi)$ is a \emph{contact Heegaard splitting}~\cite{Tor00,Gir02} if $(H_i,\xi)$ is a standard contact handlebody for $i=1,2$. There are a few equivalent criteria to define/recognize when $(H_i,\xi_i)$ is a standard contact handlebody. The most useful characterization for our purposes is, $\Sigma :=\partial H_i$ is a convex surface with dividing set $d$ cutting $\Sigma$ into two pieces $\Sigma^+$ and $\Sigma^-$ which are diffeomorphic surfaces with boundary, and there exists a collection of compressing disks for $H_i$ such that their boundaries give a cut system $C^i$ for $\Sigma$ such that $C^i\cap \Sigma^\pm$ is an arc systems for $\Sigma^\pm$. In particular, each $C^i$ must intersect the dividing set in exactly two points (the end points of the arc).

	\emph{Multisections with divides} were introduced in~\cite{IslambouliStarkston} to capture the symplectic structure of a Weinstein domain diagrammatically via multisections. They are multisections of Weinstein domains (which necessarily have boundary), where each of the pieces ($X_i$ and $H_i$) of the multisection has a standard symplectic or contact structure. A standard symplectic structure on $\natural_k S^1\times D^3$ comes from Weinstein's handle construction~\cite{Wei91} attaching $4$-dimensional $1$-handles to a single $0$-handle. This comes with a Liouville structure which yields a contact structure on the boundary $\#_k S^1\times S^2$. The symplectic $\natural_k S^1\times D^3$ is a symplectic filling of this contact boundary, so in particular the contact structure is tight. Note there is a unique tight contact structure on $\#_k S^1\times S^2$. To make each of the 3-dimensional pieces standard, we also want this contact structure to be compatible with the Heegaard splitting. There are three equivalent characterizations of contact Heegaard splittings:
	\begin{enumerate}
		\item Each contact handlebody in the Heegaard splitting is a standard neighborhood of a Legendrian graph. 
		\item The Heegaard splitting comes from an open book decomposition. 
		\item The Heegaard surface is a convex surface with dividing set $d$ which cuts it into two homeomorphic pieces $\Sigma^+$ and $\Sigma^-$, and there exists a cut system for each handlebody which intersects $\Sigma^\pm$ in an arc system.
	\end{enumerate}
	The last characterization is the one which we will use in diagrams.

	\begin{definition}
		\label{def:MultisectionDivides}
		A \emph{multisection with divides diagram} is a tuple $(\Sigma,d,C^1,\dots, C^n)$ such that
		\begin{itemize}
			\item $d$ cuts $\Sigma$ into two pieces $\Sigma^+$ and $\Sigma^-$ which are homeomorphic
			\item $C^i\cap \Sigma^\pm$ is an arc system for $i=1,\dots, n$.
			\item $(\Sigma,d,C^i,C^{i+1})$ is a contact Heegaard splitting diagram for the standard \emph{tight} contact structure on $\#_{k_i}S^1\times S^2$.
		\end{itemize}
	\end{definition}
	
	It was shown in~\cite{IslambouliStarkston} that every $4$-dimensional Weinstein domain admits a multisection with divides which can be represented by such a diagram.

	\section{Contact cut graph} \label{s:ccdef}
	
	In this section we define of the analogue of the cut graph in the contact handlebody setting.

	Given a closed oriented surface $ \Sigma_g $, a multicurve $ d $ is called a \emph{standard contact dividing set} if it separates $ \Sigma_g $ into two surfaces with boundary $\Sigma^+$ and $\Sigma^-$ which are orientation reversing homeomorphic to each other.	

	\begin{definition}
		Given a closed oriented surface $ \Sigma_g $ with standard contact dividing set $ d $ separating $\Sigma = \Sigma^-\cup_d \Sigma^+$, a \emph{contact cut system} is a cut system such that its intersection with each of $\Sigma^-$ and $\Sigma^+$ is an arc system.
	\end{definition}

	See Figure~\ref{fig:contact_cut_example} for an example. 

	\begin{figure}
		\centering
		\includegraphics[scale=0.2]{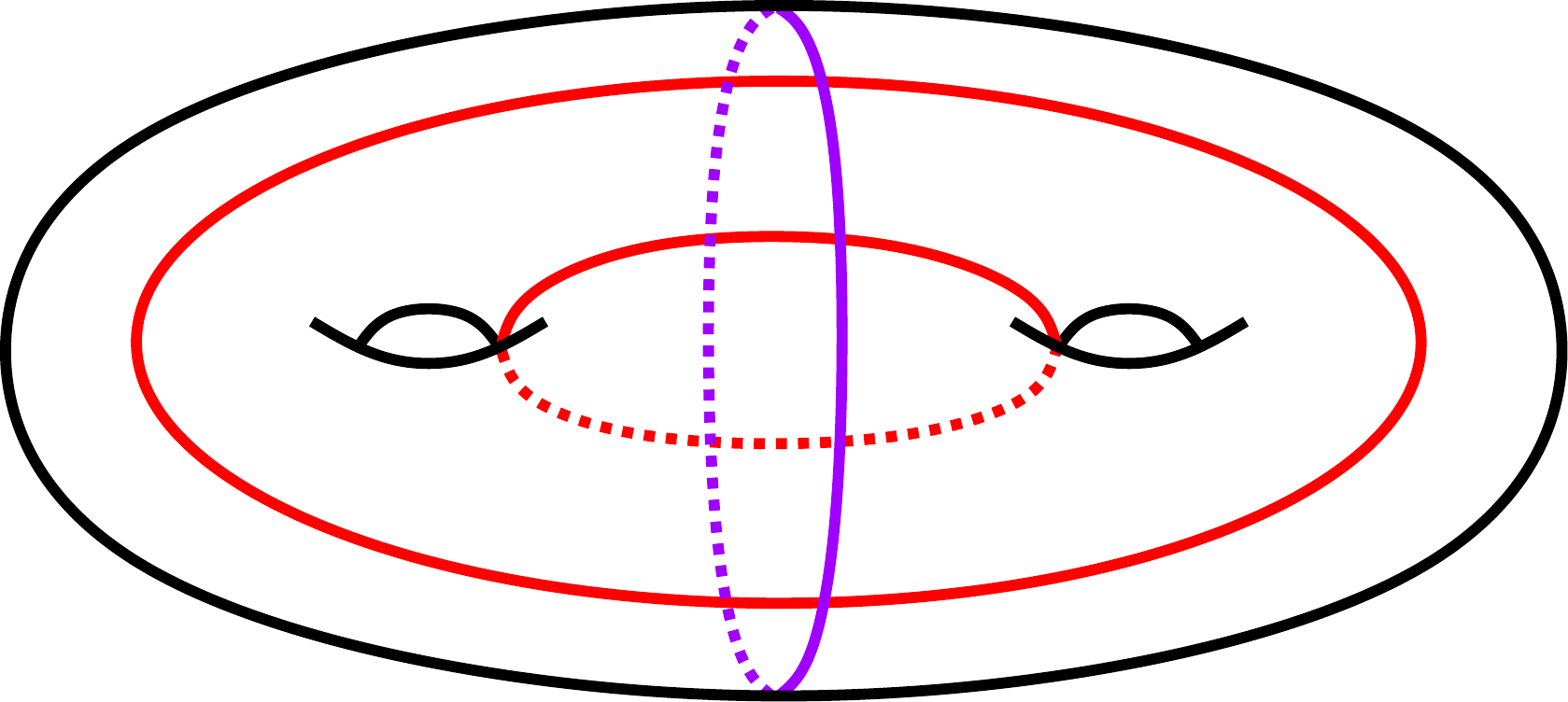}
		\caption{A contact cut system. The red curves form an complete arc system on either side of the purple dividing set.}
		\label{fig:contact_cut_example}
	\end{figure}

	In general, there are many different cut systems which define the same topological handlebody, related by a sequence of type $0$ moves (handleslides). We would like to understand which of these moves will preserve the contact condition with respect to a fixed dividing set $d$. One type of handleslide which clearly preserves the requirement of being a contact cut system comes from performing arc slides in both $\Sigma^-$ and $\Sigma^+$ simultaneously. This motivates the following definition.	
	\begin{definition}	
		A \emph{contact handleslide} or \emph{contact type $0$ move} is a handleslide of one curve within a contact cut system over a sequence of other curves such that for each slide, the ribbon along which the slide is a regular neighborhood of an arc contained in $d$.
	\end{definition}
	Observe that in this definition, the arc contained in $d$ provides the path $\gamma$ in $\partial \Sigma^\pm$ along which we are performing arc slides in $\Sigma^+$ and $\Sigma^-$ simultaneously.
	
	In the classical setting, two cut systems $C_1$ and $C_2$ for $\Sigma$ are related by a type $1$ move if they intersect in a single point. When this occurs, one can show (by looking in a neighborhood of the two curves that intersect) that there exists a curve $V\subset \Sigma$ such that $C_2 = \tau_V^\pm(C_1)$ where $\tau_V^\pm$ is a Dehn twist about $V$. In general, $V$ need not respect the dividing set $d$. To be geometrically compatible with the contact and symplectic structure, we ask $V$ to be disjoint from the dividing set.
	
	\begin{definition}
		There is an \emph{oriented contact type $1$ move} from contact cut system $C_1$ to contact cut system $C_2$ (for $(\Sigma_g,d)$), if there exists a curve $V\subset \Sigma_g\backslash d$, such that $C_2$ is obtained from $C_1$ by performing a single right-handed Dehn twist along $V$. If the Dehn twist is left- or right-handed, we say $C_1$ and $C_2$ are related by an (unoriented) contact type $1$ move.
	\end{definition}

	\begin{definition}		
		The \emph{contact cut graph}, $ CC(\Sigma_g,d) $, is a subgraph of $ C(\Sigma_g) $ whose vertices correspond to isotopy classes of contact cut systems for $ (\Sigma_g,d) $ and whose edges correspond to contact type $0$ and contact type $1$ moves, where type $1$ edges are oriented from $v_i$ to $v_{j}$ where $v_{j}$ is obtained from $v_i$ by a right-handed Dehn twist.
	\end{definition}

	Next, we will explore some preliminary properties of contact type $0$ and contact type $1$ moves. To do this, we first need a lemma about arc systems.

	\begin{lemma}\label{lem:arcslides}
		Let $F$ be a surface with boundary, and let $\mathcal{A}$ and $\mathcal{A}'$ be two arc systems for $F$. Then $\mathcal{A}$ and $\mathcal{A}'$ can be related by a sequence of arc slides.
	\end{lemma}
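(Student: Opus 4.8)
The plan is to reduce this to a statement about the mapping class group of $F$ and the transitivity of its action on arc systems. First I would recall that the complex of arc systems (the ``cut complex'' for the surface $F$ with boundary, whose vertices are isotopy classes of complete arc systems $\mathcal{A}$ with $\overline{F\setminus\mathcal{A}}$ a disk) is known to be connected — this is the analogue for surfaces with boundary of the Hatcher–Thurston connectivity result, and in the bounded case it can be proved more directly. Concretely, I would observe that since $\overline{F\setminus\mathcal{A}}$ is a disk, the surface $F$ is recovered from this disk by gluing boundary arcs in pairs; equivalently, $\mathcal{A}$ determines and is determined by a way of presenting $F$ as a polygon with edge identifications. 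Two such presentations give the same surface, so they differ by a sequence of elementary moves, and the task is to check that each elementary move is realized by an arc slide.

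The cleanest route: pick a single ``reference'' arc system $\mathcal{A}_0$ and show that an arbitrary $\mathcal{A}$ can be carried to $\mathcal{A}_0$ by arc slides. I would argue by induction on the geometric intersection number $\sum_{i,j} |a_i \cap a'_j|$ between $\mathcal{A} = \{a_i\}$ and $\mathcal{A}_0 = \{a'_j\}$. If this number is zero, then (since both systems cut $F$ into a disk) the two systems are isotopic — a maximal collection of disjoint arcs cutting $F$ into a disk is unique up to isotopy once it is disjoint from another such collection, because in the disk $\overline{F\setminus \mathcal{A}_0}$ the arcs of $\mathcal{A}$ are disjoint properly embedded arcs that must each be boundary-parallel, forcing them to coincide with the gluing arcs. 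If the intersection number is positive, choose an innermost intersection: an arc of $\mathcal{A}_0$, say $a'_j$, and a subarc of it cobounding with a subarc of some $a_i$ an embedded bigon (or half-bigon meeting $\partial F$) in $F$ whose interior is disjoint from $\mathcal{A}$. An arc slide of $a_i$ over the neighboring arcs of $\mathcal{A}$, guided by a path along $\partial F$ or pushing across this bigon, strictly decreases the total intersection with $\mathcal{A}_0$ while keeping $\mathcal{A}$ an arc system (which is exactly the content of the discussion preceding Lemma~\ref{lem:arcslides} in the excerpt, verifying that arc slides preserve the arc-system property). Then induction finishes the argument, and since arc slides are invertible, $\mathcal{A}$ and $\mathcal{A}'$ are related by a sequence of them in either direction.

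The step I expect to be the main obstacle is the inductive reduction move: producing, from a positive intersection, a specific arc slide (or bounded sequence of arc slides) that provably decreases total intersection with the reference system without creating new intersections elsewhere. The subtlety is bookkeeping — an innermost bigon for the pair $(a_i, a'_j)$ need not have interior disjoint from the \emph{other} arcs of $\mathcal{A}$, so one must first slide those obstructing arcs out of the way, or choose the innermost disk more carefully (e.g., innermost among all components of $F$ cut along $\mathcal{A}\cup\mathcal{A}_0$ that is a bigon or boundary-bigon). Handling the arcs that share endpoints on $\partial F$, and the case where the relevant bigon touches $\partial F$, requires care so that the slide path $\gamma\subset\partial F$ in the definition of arc slide is available and disjoint from the rest of $\mathcal{A}$. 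Once the single-move lemma is set up correctly, the rest is a routine induction; alternatively, one can cite the known connectivity of the arc complex for surfaces with boundary and translate ``edge of the arc complex'' into ``arc slide,'' but I would prefer to give the self-contained intersection-number argument since the paper's later sections lean on understanding arc slides explicitly.
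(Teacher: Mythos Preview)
Your approach is correct in outline but takes a genuinely different route from the paper. The paper gives a short conceptual argument: an arc system is precisely the set of co-cores of the $1$-handles in a handle decomposition of $F$ (built on a single $0$-handle), and turning the decomposition upside down makes these the cores. Two handle decompositions of the same surface relative to the boundary are related by handle slides, and handle slides of $1$-handles translate exactly to arc slides on the co-cores. So the paper outsources the work to the standard fact about handle decompositions and is done in three lines.

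Your argument, by contrast, is a self-contained intersection-number induction. This is also a valid and well-known strategy (essentially the direct proof of connectivity of the arc complex), and you correctly flag the delicate point: producing an arc slide that strictly reduces intersection with the reference system requires controlling what happens to the other arcs, and the relevant bigon must be innermost for all of $\mathcal{A}$, not just one arc. Your base case (disjoint implies isotopic) also deserves a sentence of justification beyond what you wrote. The trade-off is that your argument is more elementary and explicit about what the arc slides actually look like --- which, as you note, is useful given how the later sections manipulate arc slides directly --- whereas the paper's proof is quicker but leans on a result that, if unpacked, is of comparable difficulty.
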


	\begin{proof}
		Note that $\mathcal{A}$ or $\mathcal{A}'$ can be interpreted as the collection of co-cores of the $1$--handles for a handle decomposition of the surface. By turning this handle decomposition upside down, $\mathcal{A}$ becomes the collection of cores of the $1$-handles. There is a collection of handleslides taking the cores of one handle decomposition to the cores of another handle decomposition of a surface with boundary. These handleslides for the relative handle decomposition are exactly arc slides. Thus, $\mathcal{A}$ and $\mathcal{A}'$ are related by arc slides.
	\end{proof}

	A priori, it could be possible to have different equivalence classes of contact handlebodies in the same smooth handlebody equivalence class. However, we show that this is not the case.
	
	\begin{proposition}\label{prop:type0}
		If two contact vertices $v, v' \in CC(\Sigma_g,d)$ are related through a smooth type $0$ move, then they are related by a contact type $0$ move.
	\end{proposition}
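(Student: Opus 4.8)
The plan is to reduce the proposition to the following reformulation: any two contact cut systems $C, C'$ for $(\Sigma_g,d)$ defining the same topological handlebody $H$ are connected by a sequence of contact type $0$ moves. This is equivalent to the stated proposition, since $v$ and $v'$ related by a smooth type $0$ move have $H_v = H_{v'}$, and conversely cut systems defining the same handlebody are connected by smooth type $0$ moves. The argument proceeds in two stages: first use contact type $0$ moves to arrange that $C$ and $C'$ restrict to the \emph{same} arc system on $\Sigma^+$, and then show that, once the $\Sigma^+$-parts agree, the handlebody condition forces the $\Sigma^-$-parts to agree as well.

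For the first stage I would record the elementary observation (already implicit in the definition of a contact handleslide) that a contact type $0$ move along an arc $\delta \subset d$ is exactly the simultaneous performance of the arc slide of $C\cap\Sigma^+$ along $\delta$ in $\Sigma^+$ and the arc slide of $C\cap\Sigma^-$ along $\delta$ in $\Sigma^-$; conversely, any arc slide of the arc system $C\cap\Sigma^+$ along a boundary arc $\delta\subset\partial\Sigma^+=d$ lifts to a contact type $0$ move of $C$. The only thing to check for the converse is that $\delta$ is also an admissible arc-slide path on the $\Sigma^-$ side, and this is automatic: the endpoints of $\delta$ lie in $C\cap d = \partial(C\cap\Sigma^+) = \partial(C\cap\Sigma^-)$, and if the interior of $\delta$ misses $C\cap\Sigma^+$ then, being contained in $d$, it misses $C\cap d$ and hence $C\cap\Sigma^-$. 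Applying Lemma~\ref{lem:arcslides} to the two arc systems $C\cap\Sigma^+$ and $C'\cap\Sigma^+$ of the surface $\Sigma^+$, they are related by a finite sequence of arc slides, and lifting these to contact type $0$ moves produces a contact cut system $\widetilde C$ with $\widetilde C\cap\Sigma^+ = C'\cap\Sigma^+$ (up to isotopy) and $H_{\widetilde C} = H$.

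For the second stage I must show $\widetilde C = C'$ as vertices. First note that, because a cut system has exactly $g$ disjoint curves while an arc system of $\Sigma^+$ has exactly $g$ arcs, every curve of a contact cut system meets $d$ in exactly two points, so it is the union of one arc of $C\cap\Sigma^+$ and one arc of $C\cap\Sigma^-$ with the same two endpoints; in particular the $\Sigma^+$-arcs and the $\Sigma^-$-arcs induce the same pairing on the $2g$ points of $C\cap d$. Writing $\widetilde C\cap\Sigma^+ = C'\cap\Sigma^+ = \{a_1,\dots,a_g\}$ we may therefore label $\widetilde C\cap\Sigma^- = \{b_1,\dots,b_g\}$ and $C'\cap\Sigma^- = \{b_1',\dots,b_g'\}$ with $\partial a_i = \partial b_i = \partial b_i'$, and it remains to see that $b_i$ is isotopic rel endpoints to $b_i'$ in $\Sigma^-$. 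Here I would use that the compressing disks $D_i, D_i'$ of $H$ bounded by $a_i\cup b_i$ and $a_i\cup b_i'$ share the boundary arc $a_i$: after arranging that they intersect only along $a_i$ (an innermost-circle and innermost-arc argument, together with the fact that a compressing disk of a handlebody is determined up to isotopy by its boundary), their union $E_i := D_i \cup_{a_i} D_i'$ is a properly embedded disk in $H$ with $\partial E_i = b_i\cup b_i' \subset \Sigma^-$, and one must show $E_i$ is boundary parallel, equivalently that $b_i\cup b_i'$ is inessential in $\partial H$, equivalently that $b_i\simeq b_i'$ rel endpoints. Ruling out the possibility that $b_i\cup b_i'$ is an essential compressing curve --- using that $\widetilde C$ and $C'$ are both \emph{complete} cut systems for the same $H$ and the disjointness of $b_i\cup b_i'$ from the remaining curves --- is the crux, and I expect this rigidity of the $\Sigma^-$-side, once the $\Sigma^+$-side and the handlebody are fixed, to be the main obstacle; managing this (possibly by an induction on $g$, cutting $H$ along an already-matched compressing disk and keeping track of $d$, $\Sigma^\pm$, disjointness, and essentiality) is the technical heart. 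Should the direct rigidity argument prove unwieldy, a fallback is to not insist the $\Sigma^+$-parts agree on the nose but instead to choose the sequence of $\Sigma^-$-arc slides carrying $\widetilde C\cap\Sigma^-$ to $C'\cap\Sigma^-$ whose mirror sequence on $\Sigma^+$ is a loop at $C'\cap\Sigma^+$; this again yields a path of contact type $0$ moves, at the cost of having to understand which arc-slide loops on $\Sigma^+$ realize a prescribed change of arc system on $\Sigma^-$.
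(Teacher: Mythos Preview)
Your two-stage plan (align one side by arc slides, then argue the other side is forced) is exactly the skeleton of the paper's proof. The problem is that by passing to the reformulation ``any two contact cut systems for the same handlebody are connected by contact type $0$ moves'' you discard the one piece of structure that makes Stage~2 easy, and you then cannot close the argument.

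A \emph{single} smooth type $0$ edge means $v=\{\alpha_1,\dots,\alpha_{g-1},\alpha_g\}$ and $v'=\{\alpha_1,\dots,\alpha_{g-1},\tilde\alpha_g\}$ share $g-1$ curves. The paper keeps this: after using Lemma~\ref{lem:arcslides} and contact handleslides to make the $\Sigma^-$-arcs of $\alpha_g$ and $\tilde\alpha_g$ agree, the two $\Sigma^+$-arcs $\overline a_g^+$ and $\tilde a_g^+$ have the same endpoints, are disjoint from $a_1^+,\dots,a_{g-1}^+$, and $\overline a_g^+$ can be taken disjoint from $\tilde a_g^+$ (since contact handleslides do not create new intersections). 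But $a_1^+,\dots,a_{g-1}^+,\tilde a_g^+$ is a full arc system, so its complement in $\Sigma^+$ is a single disk; an arc in a disk is determined rel endpoints, hence $\overline a_g^+=\tilde a_g^+$. That is the entire Stage~2.

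By contrast, your Stage~2 asks for the rigidity statement ``same handlebody $+$ same $\Sigma^+$-arc system $\Rightarrow$ same $\Sigma^-$-arc system'' for \emph{all} $g$ arcs simultaneously, with no shared curves to cut down to a disk. You correctly identify this as the crux, sketch an innermost-arc/disk argument for each $E_i=D_i\cup_{a_i}D_i'$, but do not rule out that $b_i\cup b_i'$ is an essential compressing curve; your proposed induction and your fallback are both left as outlines. This is a genuine gap. The fix is not to strengthen the rigidity argument but to \emph{not} reformulate: work directly with the single type $0$ move, keep the $g-1$ common curves, and the uniqueness on the second side becomes the one-line disk argument above.
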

	
	\begin{proof}
		Let $v=(\alpha_1,\dots, \alpha_{g-1},\alpha_g)$ and $v'=(\alpha_1,\dots, \alpha_{g-1},\tilde \alpha_g)$. The dividing set $d$ cuts the surface $\Sigma$ into two homeomorphic surfaces with boundary, $\Sigma^-$ and $\Sigma^+$. Let $a_i^\pm = \alpha_i\cap \Sigma^\pm$ and $\tilde a_g^\pm = \tilde\alpha_g\cap \Sigma^\pm$. Since $(a_1^-,\dots,a_{g-1}^-, a_g^-)$ and $(a_1^-,\dots, a_{g-1}^-,\tilde a_g^-)$ are both complete cut systems of arcs for $\Sigma^-$, there exists a sequence of arc slides between them. Apply the corresponding contact handleslides to $\alpha_g$ to obtain the contact cut system $(\alpha_1,\dots, \alpha_{g-1},\overline\alpha_g)$. By assumption, $\tilde \alpha_g$ was disjoint from $\alpha_1,\dots, \alpha_g$. Note that performing contact handleslides cannot create new intersections because the sliding ribbon is disjoint from $\alpha_1,\dots, \alpha_g$. Therefore $\overline\alpha_g$ is also disjoint from $\alpha_1,\dots, \alpha_g$. We claim that $\overline \alpha_g$ is isotopic to $\tilde \alpha_g$. Indeed, by construction $\overline a_g^-:=\overline \alpha_g \cap \Sigma^-$ is isotopic to $\tilde a_g^-$, and after an isotopy we can assume that these coincide. Let $\overline a_g^+= \overline \alpha_g \cap \Sigma^+$. Then $(a_1^+,\dots, a_{g-1}^+,\tilde a_g^+)$ and $(a_1^+,\dots, a_{g-1}^+,\overline a_g^+)$ are both complete cut systems of arcs for $\Sigma^+$, and $\overline a_g^+$ is disjoint from $a_1^+,\dots, a_{g-1}^+,\tilde a_g^+$ away from its end points. Moreover, up to isotopy, the end points of $\overline a_g^+$ and $\tilde a_g^+$ agree because of the contact handleslides lining up the arc systems on $\Sigma^-$. There is a unique way up to isotopy to connect two end-points disjointly from $a_1^+,\dots, a_{g-1}^+,\tilde a_g^+$ (since these arcs cut $\Sigma^+$ into a disk), therefore $\overline a_g^+$ must be isotopic to $\tilde a_g^+$ through an isotopy fixing the end points.
	\end{proof}

	\begin{proposition}\label{prop:type1}
		If two contact vertices $v, v' \in C(\Sigma_g)$ are related by a smooth type 1 move, then after some sequence of contact handleslides, they are related by a contact type 1 move.
	\end{proposition}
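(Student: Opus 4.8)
The plan is to mimic the structure of the proof of Proposition~\ref{prop:type0}, but working one dividing-set region at a time and tracking a single Dehn twist curve. Suppose $v=(\alpha_1,\dots,\alpha_{g-1},\alpha_g)$ and $v'=(\alpha_1,\dots,\alpha_{g-1},\tilde\alpha_g)$, where $\alpha_g$ meets $\tilde\alpha_g$ transversely in one point. As in the classical setting, there is a curve $V\subset\Sigma_g$ with $\tilde\alpha_g=\tau_V^{\pm1}(\alpha_g)$; concretely $V$ is the boundary of a regular neighborhood of $\alpha_g\cup\tilde\alpha_g$, so $V$ is disjoint from $\alpha_g$ but in general crosses $d$. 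The goal is to replace $v$ by a contact cut system $\overline v=(\alpha_1,\dots,\alpha_{g-1},\overline\alpha_g)$, obtained from $v$ by contact handleslides, so that $\overline\alpha_g$ still meets $\tilde\alpha_g$ in a single point and the new twist curve $\overline V$ (the boundary of a neighborhood of $\overline\alpha_g\cup\tilde\alpha_g$) is disjoint from $d$.

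First I would set up the local picture. Isotope so that the single intersection point $p=\alpha_g\cap\tilde\alpha_g$ lies in the interior of, say, $\Sigma^+$, and restrict attention to a neighborhood of $p$; outside a disk around $p$, the arcs $a_g^\pm=\alpha_g\cap\Sigma^\pm$ and $\tilde a_g^\pm=\tilde\alpha_g\cap\Sigma^\pm$ run through the two sides. Now apply Lemma~\ref{lem:arcslides} on $\Sigma^-$: since $(a_1^-,\dots,a_{g-1}^-,a_g^-)$ and $(a_1^-,\dots,a_{g-1}^-,\tilde a_g^-)$ are both arc systems for $\Sigma^-$, they are related by arc slides, and performing the corresponding contact handleslides to $\alpha_g$ produces $\overline\alpha_g$ with $\overline a_g^-$ isotopic to $\tilde a_g^-$ — so after isotopy we may assume $\overline\alpha_g$ and $\tilde\alpha_g$ coincide on the entire region $\Sigma^-$. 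As in Proposition~\ref{prop:type0}, these contact handleslides create no new intersections among the first $g-1$ curves, and they do not change the fact that $\overline\alpha_g\cap\tilde\alpha_g$ is contained in $\Sigma^+$. On $\Sigma^+$, the arcs $\overline a_g^+$ and $\tilde a_g^+$ now share endpoints on $d$, are each disjoint from $a_1^+,\dots,a_{g-1}^+$, and meet transversely in one interior point $p$.

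At this point $\overline\alpha_g$ and $\tilde\alpha_g$ agree on $\Sigma^-$ and form a ``figure-eight'' differing by a single crossing inside $\Sigma^+$, so the twist curve $\overline V=\partial N(\overline\alpha_g\cup\tilde\alpha_g)$ can be isotoped to lie entirely in $\Sigma^+$: near the agreeing portion in $\Sigma^-$ the boundary of the neighborhood runs parallel to the common arc and can be pushed across $d$ into $\Sigma^+$, and near $p$ it is already in $\Sigma^+$. Hence $\overline V\cap d=\emptyset$, and $\tilde\alpha_g=\tau_{\overline V}^{\pm1}(\overline\alpha_g)$, which is exactly a contact type~$1$ move from $\overline v$ to $v'$. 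The main obstacle is the geometric claim in this last paragraph: one must verify carefully — using that $\overline a_g^-$ and $\tilde a_g^-$ are honestly \emph{equal} after isotopy, not merely isotopic rel nothing — that the regular neighborhood $N(\overline\alpha_g\cup\tilde\alpha_g)$ can be chosen so that the part of $\partial N$ lying over $\Sigma^-$ is pushed off $d$; the subtlety is that $d$ may wind around, so one wants the isotopy making the two arcs coincide on $\Sigma^-$ to be chosen first, and only then take the neighborhood. A secondary technical point is confirming that the resulting contact type~$1$ move has the correct handedness (right-handed) to be an \emph{oriented} move when $v,v'$ were, which follows since the contact handleslides are ambient isotopies and thus preserve the sign of the twist.
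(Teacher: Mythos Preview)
Your overall strategy matches the paper's: assume the unique intersection lies in $\Sigma^+$, use Lemma~\ref{lem:arcslides} on $\Sigma^-$ to perform contact handleslides making $\overline a_g^-$ agree with $\tilde a_g^-$, and then locate the Dehn twist curve inside $\Sigma^+$. The gap is in the last step, and it is a genuine one: your identification of the twist curve as $\overline V=\partial N(\overline\alpha_g\cup\tilde\alpha_g)$ is incorrect. Since $\overline\alpha_g$ and $\tilde\alpha_g$ both lie in the interior of $N$, they are disjoint from $\partial N$, and hence $\tau_{\partial N}^{\pm1}$ fixes each of them up to isotopy. In particular $\tau_{\partial N}^{\pm1}(\overline\alpha_g)=\overline\alpha_g\neq\tilde\alpha_g$. (Concretely, on a torus with $\overline\alpha_g=(1,0)$ and $\tilde\alpha_g=(1,1)$ the correct twist curve is the $(0,1)$ curve, while $\partial N$ is null-homotopic.) Thus your ``push $\partial N$ into $\Sigma^+$'' argument, while geometrically plausible for $\partial N$, proves nothing about the actual curve $V$ realizing $\tilde\alpha_g=\tau_V^{\pm1}(\overline\alpha_g)$; that curve necessarily meets $\overline\alpha_g$ and is not the boundary of the figure-eight neighborhood.

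The paper completes the argument by a different (and cleaner) identification of $V$: cut $\Sigma^+$ along $a_1^+,\dots,a_{g-1}^+$ to obtain an annulus in which $\overline a_g^+$ is the vertical arc and $\tilde a_g^+$ is an arc with (nearly) the same endpoints meeting it once; then $V$ is the core of this annulus, which visibly lies in $\Sigma^+$. This replaces your neighborhood-boundary description and makes the disjointness from $d$ immediate. Your remark about handedness is unnecessary here, since the statement only asks for an (unoriented) contact type~$1$ move.
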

	\begin{proof}
		Let us use the same notation as in Proposition~\ref{prop:type0}, except now $\alpha_g$ and $\tilde \alpha_g$ have a single transverse intersection.  Without loss of generality, we may assume that the intersection occurs on $\Sigma^+$. Thus, $(a_1^-,\dots, a_{g-1}^-,a_g^-)$ and $(a_1^-,\dots, a_{g-1}^-,\tilde a_g^-)$ are complete cut systems of arcs for $\Sigma^-$. By Lemma~\ref{lem:arcslides}, there is a sequence of arc slides relating them. We perform the corresponding contact handleslides to $\alpha_g$, and denote the resulting curve by $\overline\alpha_g$. Since contact handleslides do not change the number of intersection points, there is a unique intersection point of $\tilde \alpha_g$ with $\overline \alpha_g$. We now cut $\Sigma^+$ along $a_1^+,\dots, a_{g-1}^+$,  which results in an annulus. Each boundary component of the annulus alternates between the ``scars" left by cutting along $a_1^+,\dots, a_{g-1}^+$ and arcs which came from $\partial \Sigma^+$. Since $\overline a_g^+$ cuts the annulus into a disk, we can assume that it is the vertical arc as shown in Figure~\ref{fig:Annuli}. We have arranged that the end-points of $\tilde a_g^+$ are small push-offs of the end-points of $\overline a_g^+$. By assumption, $\overline a_g^+$ and $\tilde a_g^+$ intersect exactly once, thus they are related by a Dehn twist about a curve $c\subset \Sigma^+$, as in Figure~\ref{fig:Annuli}. Therefore, $v$ and $v'$ are related by potentially a contact type $0$ move followed by a contact type $1$ move.
		
		\begin{figure}
			\includegraphics[scale=0.2]{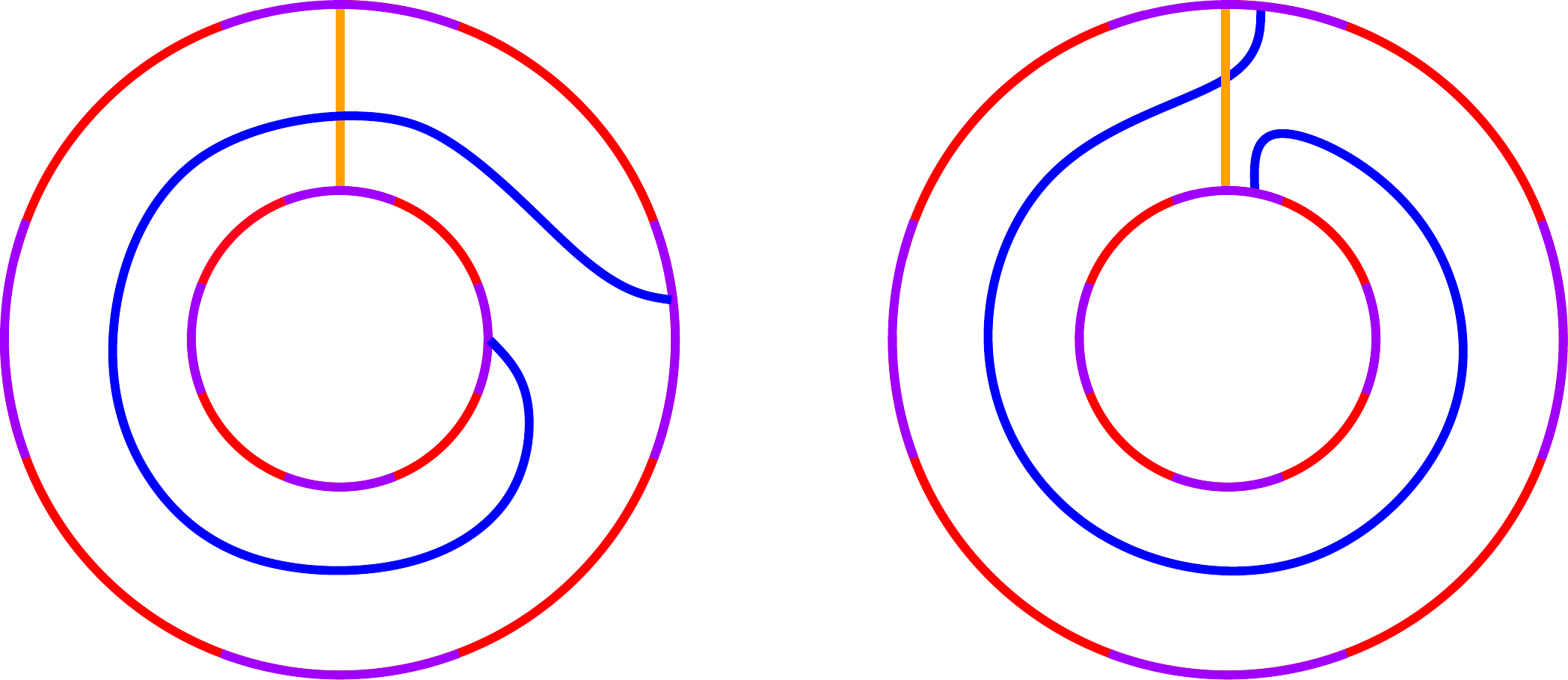}
			\caption{The two blue arcs are related through a sequence of arc slides, hence the contact vertices they represent are related by type 0 moves. The blue arc in the second picture is related to the orange arc by a Dehn twist, and therefore the contact vertices they represent are related by a type 1 move.}
			\label{fig:Annuli}
		\end{figure}
	\end{proof}

	\section{Connectedness of Contact Cut Graph}
	\label{s:Conn}

	A central feature of the cut graph is its connectedness, which allows one to deduce generators for any group acting transitively on the graph. In the course of providing a presentation for the mapping class group of a closed orientable surface, Hatcher and Thurston  \cite{HT80} showed that any two cut systems in the cut complex could be connected using only type-1 edges. A theorem of Reidemeister and Singer \cite{Rei33} \cite{Sin33} implies that any two cut systems defining the same handlebody can be connected by a sequence of only type-0 edges. This was later utilized by Wajnyrb \cite{Waj98} to provide a presentation for the mapping class group of a handlebody.
	  In the present work, we are studying a subgraph of the standard cut graph made up of only contact type vertices and contact type edges. While the larger cut graph is connected, the contact cut subgraph could in principle have multiple connected components. However, we will prove that the contact cut graph is in fact connected as well.
	
	\begin{theorem}\label{thm:connected}
		The (unoriented) contact cut graph $CC(\Sigma_g,d)$ is connected.
	\end{theorem}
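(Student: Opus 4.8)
The plan is to reduce connectedness of $CC(\Sigma_g,d)$ to the already-known connectedness of the classical cut graph $C(\Sigma_g)$ together with the Reidemeister–Singer theorem, using Propositions~\ref{prop:type0} and~\ref{prop:type1} as the bridge. First, fix two contact cut systems $v,v'\in CC(\Sigma_g,d)$. Viewing them as vertices of the ambient cut graph $C(\Sigma_g)$, which is connected by Hatcher–Thurston, there is a path $v=w_0,w_1,\dots,w_N=v'$ in $C(\Sigma_g)$ along type $0$ and type $1$ edges. The difficulty is that the intermediate vertices $w_i$ need not be \emph{contact} cut systems for $(\Sigma_g,d)$, so this path does not a priori lie in the subgraph $CC(\Sigma_g,d)$. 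I would therefore not try to correct an arbitrary path, but instead take a path that passes through contact vertices at prescribed stages and upgrade the moves between them.

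Concretely, the key step is to choose the Hatcher–Thurston path so that it factors through a sequence of contact cut systems $v=u_0,u_1,\dots,u_M=v'$ in which consecutive $u_j$ and $u_{j+1}$ differ by a \emph{single} smooth type $0$ or type $1$ move. One way to arrange this: Hatcher–Thurston show any two cut systems are connected by type $1$ edges alone; moreover, given a contact cut system, performing a single smooth type $1$ move produces a new cut system that, after an isotopy, can be taken to intersect the dividing set correctly, i.e.\ is again a contact cut system (one has freedom in choosing the twisting curve and the representatives of the new curve). Granting such a path of contact vertices connected by single smooth moves, Proposition~\ref{prop:type0} converts each smooth type $0$ move between contact vertices into a contact type $0$ move (a sequence of contact handleslides), and Proposition~\ref{prop:type1} converts each smooth type $1$ move between contact vertices into a contact type $0$ move followed by a contact type $1$ move. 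Concatenating these, we obtain a path from $v$ to $v'$ entirely within $CC(\Sigma_g,d)$, proving connectedness. Note the theorem is stated for the \emph{unoriented} contact cut graph, so we do not need to control the handedness of the Dehn twists — this is exactly why Proposition~\ref{prop:type1} suffices as stated.

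The main obstacle is the highlighted step: producing a path in $C(\Sigma_g)$ all of whose vertices are contact cut systems for the fixed $(\Sigma_g,d)$, with consecutive vertices a single smooth move apart. The cleanest route is to set up an auxiliary ``arc cut graph'' for a surface with boundary — whose vertices are arc systems and whose edges are arc slides and arc-twist moves — prove it is connected (Lemma~\ref{lem:arcslides} already gives connectedness via arc slides alone, analogous to Reidemeister–Singer), and then observe that a contact cut system is determined by its restriction to $\Sigma^+$ together with the matching condition along $d$. Lifting a path in the arc cut graph of $\Sigma^+$ to a path of contact cut systems of $\Sigma_g$ (by extending each arc system across $d$ using the homeomorphism $\Sigma^+\cong\Sigma^-$) then yields the desired path, with arc slides lifting to contact type $0$ moves and arc-twists lifting to contact type $1$ moves. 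If this lifting is carried out carefully, the theorem follows; I expect the bookkeeping about endpoints on $d$ and the compatibility of the two sides to be the only genuinely delicate part.
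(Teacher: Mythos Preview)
Your proposal has a real gap, and it lies precisely where you flag it. The claim that ``performing a single smooth type $1$ move produces a new cut system that, after an isotopy, can be taken to intersect the dividing set correctly'' is false: isotopy classes are already fixed, and a generic smooth type $1$ move from a contact vertex lands on a cut system whose curves cross $d$ in more than two points. So you cannot simply choose a Hatcher--Thurston path whose vertices happen to be contact. Your fallback via an ``arc cut graph of $\Sigma^+$'' also does not close the gap: a contact cut system is \emph{not} determined by its restriction to $\Sigma^+$ together with endpoint data on $d$. Two contact cut systems can have identical arc systems on $\Sigma^+$ and completely different arc systems on $\Sigma^-$. Your lift $\mathcal{A}^+\mapsto \mathcal{A}^+\cup\phi(\mathcal{A}^+)$ only reaches the ``doubled'' contact cut systems (those with trivial monodromy), and there is no reason $v$ or $v'$ is of this form. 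Arc slides in your arc cut graph lift to contact type $0$ moves, which change \emph{both} sides simultaneously, so they never adjust the discrepancy between the $\Sigma^+$ and $\Sigma^-$ arc systems.

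The missing idea, which the paper supplies, is exactly to measure that discrepancy: after using contact type $0$ moves to align the $\Sigma^-$ arc systems of $v$ and $v'$, the remaining difference on $\Sigma^+$ is encoded by a mapping class $m$ of $\Sigma^+$ (the monodromy of the open book determined by the pair $v,v'$). One then writes $m=\tau_{c_n}^{\pm}\circ\cdots\circ\tau_{c_1}^{\pm}$ as a product of Dehn twists and, for each factor, chooses an arc system dual to $c_i$ so that the twist is realized by a single contact type $1$ move; Lemma~\ref{lem:arcslides} and Lemma~\ref{lem:arcslidediffeo} supply the contact type $0$ moves between consecutive such choices. This bypasses the Hatcher--Thurston path entirely and never needs Propositions~\ref{prop:type0} or~\ref{prop:type1} (those show the stronger statement that $CC$ and $CC^*$ have the same connectivity, but the direct monodromy argument already produces contact edges).
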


	Observe that there is an intermediary graph between the smooth cut graph $C(\Sigma)$ and the contact cut graph $CC(\Sigma,d)$, where we include only the contact type vertices, but include all edges from $C(\Sigma)$ which connect pairs of contact vertices. We will denote this graph by $CC^*(\Sigma,d)$. A consequence of Propositions~\ref{prop:type0} and~\ref{prop:type1} is that if $CC^*(\Sigma,d)$ is connected, then $CC(\Sigma,d)$ is connected as well. We know that the full cut graph $C(\Sigma)$ is connected \cite{HT80}. However, this does not immediately imply that $CC^*(\Sigma,d)$ is connected because in principle there may be two contact vertices which are connected in $C(\Sigma)$ only by paths which pass through non-contact vertices. Thus, we need to make further arguments to prove that $CC^*(\Sigma,d)$ and $CC(\Sigma,d)$ are connected graphs. (Note that since they have the same set of vertices, and the edges of $CC(\Sigma,d)$ are a subset of that of $CC^*(\Sigma,d)$, proving that $CC(\Sigma,d)$ is a connected graph is a priori strictly stronger, but based on Propositions~\ref{prop:type0} and~\ref{prop:type1}, proving connectedness of $CC^*(\Sigma,d)$ or of $CC(\Sigma,d)$ are equivalent.)
	
	We need one more useful lemma, before we can prove the main theorem.
	
	Note that a sequence of arc slides between complete arc systems $\mathcal{A}$ and $\mathcal{A}'$ for a surface $F$ is specified by a sequence of intervals $\gamma_1,\dots, \gamma_n \subset \partial F$. This allows us to compare sequences of arc slides after a diffeomorphism on $F$ which acts as the identity on a neighborhood of $\partial F$.
	
	\begin{lemma}\label{lem:arcslidediffeo}
		Let $F$ be a surface with boundary, and let $\mathcal{A}$ and $\mathcal{A}'$ be complete arc systems for $F$ which are related by a sequence of arc slides specified by the intervals $\gamma_1,\dots, \gamma_n\subset \partial F$. Let $\phi:F\to F$ be a diffeomorphism of $F$ which is the identity on a neighborhood of $\partial F$. Then $\phi(\mathcal{A})$ and $\phi(\mathcal{A}')$ are related by the sequence of arcslides specified by $\gamma_1,\dots, \gamma_n$.
	\end{lemma}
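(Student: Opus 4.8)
The plan is to treat this as a straightforward naturality statement: an arc slide is a local operation whose defining data — the interval $\gamma \subset \partial F$, the two arc endpoints it connects, and an auxiliary regular neighborhood — is carried along by any diffeomorphism, and when that diffeomorphism is supported away from $\partial F$ the intervals themselves are left completely unchanged. First I would unwind the hypothesis by writing the sequence of arc slides as a chain of intermediate arc systems $\mathcal{A} = \mathcal{A}_0, \mathcal{A}_1, \ldots, \mathcal{A}_n = \mathcal{A}'$, where $\mathcal{A}_k$ is obtained from $\mathcal{A}_{k-1}$ by an arc slide of some $a_i \in \mathcal{A}_{k-1}$ over some $a_j \in \mathcal{A}_{k-1}$ along the interval $\gamma_k$, using a regular neighborhood $N_k$ of $a_i \cup \gamma_k \cup a_j$. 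The goal is then to show that $\phi(\mathcal{A}_0), \phi(\mathcal{A}_1), \ldots, \phi(\mathcal{A}_n)$ is a valid sequence of arc slides from $\phi(\mathcal{A})$ to $\phi(\mathcal{A}')$ specified by the \emph{same} intervals $\gamma_1, \ldots, \gamma_n$.

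The crux is the single–arc-slide step: if $\mathcal{A}_k$ is the arc slide of $\mathcal{A}_{k-1}$ along $\gamma_k$, then $\phi(\mathcal{A}_k)$ is the arc slide of $\phi(\mathcal{A}_{k-1})$ along $\gamma_k$. Since $\phi$ is the identity on a neighborhood $U$ of $\partial F$ and $\gamma_k \subset \partial F \subset U$, we have $\phi(\gamma_k) = \gamma_k$ pointwise; in particular $\phi$ fixes the endpoints of $\gamma_k$, which are the endpoints $a_i^+$ of $a_i$ and $a_j^-$ of $a_j$, as well as the other endpoints $a_i^-, a_j^+$. Hence $\phi(a_i)$ and $\phi(a_j)$ have exactly the same endpoints as $a_i, a_j$, so $\gamma_k$ still runs from an endpoint of $\phi(a_i)$ to an endpoint of $\phi(a_j)$. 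The remaining conditions in the definition of an arc slide are preserved because $\phi$ is a diffeomorphism: the interior of $\gamma_k$ remains disjoint from $\phi(\mathcal{A}_{k-1})$ (it was disjoint from $\mathcal{A}_{k-1}$ and $\phi|_{\partial F} = \mathrm{id}$), and $\phi(N_k)$ is a regular neighborhood of $\phi(a_i) \cup \gamma_k \cup \phi(a_j)$ disjoint from $\phi(\mathcal{A}_{k-1}) \setminus \{\phi(a_i), \phi(a_j)\}$. Finally, as $a_i'$ is by definition the boundary component of $N_k$ isotopic rel endpoints to the concatenation $a_i \ast \gamma_k \ast a_j$, its image $\phi(a_i')$ is the boundary component of $\phi(N_k)$ isotopic to $\phi(a_i) \ast \gamma_k \ast \phi(a_j)$. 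Therefore $\phi(\mathcal{A}_k) = \bigl(\phi(\mathcal{A}_{k-1}) \setminus \{\phi(a_i)\}\bigr) \cup \{\phi(a_i')\}$ is precisely the arc slide of $\phi(\mathcal{A}_{k-1})$ over $\phi(a_j)$ along $\gamma_k$.

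Chaining this step for $k = 1, \ldots, n$ by induction on $n$ yields the desired sequence of arc slides from $\phi(\mathcal{A})$ to $\phi(\mathcal{A}')$ specified by $\gamma_1, \ldots, \gamma_n$, completing the proof. There is no real obstacle here; the only thing that needs genuine care is the bookkeeping — checking that every ingredient of the arc-slide definition (which arc slides over which, along which interval, with which auxiliary neighborhood, and with which isotopy class of the output arc) is transported correctly by $\phi$, and in particular emphasizing that the intervals are literally the same because $\phi$ is supported in the interior of $F$. I would therefore isolate the single-slide assertion as a displayed claim and then dispatch the general statement with a one-line induction.
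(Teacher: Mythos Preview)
Your proposal is correct and follows essentially the same approach as the paper: reduce to a single arc slide, use that $\phi$ fixes a neighborhood of $\partial F$ to see $\gamma$ and the endpoints are preserved, observe that $\phi(N)$ is a regular neighborhood of $\phi(a_i)\cup\gamma\cup\phi(a_j)$, and conclude that $\phi(a_i')$ is the new arc. The paper's write-up is slightly terser but the argument is the same.
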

	
	\begin{proof}
		It suffices to check this for a single arc slide along the interval $\gamma$. (Then repeatedly apply the result for a finite sequence of arc slides.)
		
		Denote by $a_1$ and $a_2$ the arcs in $\mathcal{A}$ with endpoints $a_1^+$ and $a_2^-$ on $\gamma$. The arc slide from $\mathcal{A}$ to $\mathcal{A}'$ is obtained by replacing $a_1$ by $a_1'$, the component of the boundary of a closed regular neighborhood $N$ of $a_1\cup \gamma \cup a_2$ whose end points are closest to $a_1^-$ and $a_2^+$. Since $\phi$ fixes a neighborhood of $\partial F$, $\phi$ fixes $\gamma$ (and the portion of $N$ which is a regular neighborhood of $\gamma$). Additionally, $\phi(a_1)$ and $\phi(a_2)$ are the arcs of $\phi(\mathcal{A})$ whose endpoints lie on $\gamma$. Therefore $\phi(N)$ is a regular neighborhood of $\phi(a_1)\cup \gamma \cup \phi(a_2)$. Therefore the arc slide of $\phi(\mathcal{A})$ along $\gamma$ is the result of replacing $\phi(a_1)$ with the component of $\partial(\phi(N))$ which is isotopic to a concatenation of $\phi(a_1)$, $\gamma$, and $\phi(a_2)$. Since $\phi(\partial N)=\partial \phi(N)$, and $\phi$ fixes a neighborhood of $\partial F$, this component of $\partial(\phi(N))$ is exactly $\phi(a_1')$. Thus, the result of the arc slide of $\phi(\mathcal{A})$ along $\gamma$ is precisely $\phi(\mathcal{A}')$ as claimed.
	\end{proof}

Now we are equipped to prove our main theorem that the contact cut graph is connected. The main idea is to extract the \emph{monodromy} of a pair of contact vertices.

	\begin{proof}[Proof of Theorem~\ref{thm:connected}]
		Let $v$ and $v'$ be vertices in the contact cut graph $CC(\Sigma,d)$. Denote the cut system corresponding to $v$ by $\alpha=\{\alpha_1,\dots, \alpha_g\}$, and the cut system corresponding to $v'$ by $\beta=\{\beta_1,\dots, \beta_g\}$. Together, $v$ and $v'$ define a contact Heegaard splitting of some $3$-manifold. This contact Heegaard splitting corresponds to an open book decomposition with page $\Sigma^-$. 
		
		To understand the monodromy of this open book decomposition, we look at the cut systems $\alpha$ and $\beta$. Note that $\alpha$ intersects each of $\Sigma^-$ and $\Sigma^+$ in an arc system. These arc systems meet along their endpoints in $d=-\partial \Sigma^-=\partial\Sigma^+$. Since $\Sigma^-$ and $\Sigma^+$ are diffeomorphic surfaces, and $\alpha^-$ and $\alpha^+$ are corresponding complete arc systems whose endpoints agree, there exists a diffeomorphism $\phi: \Sigma^+\to \Sigma^-$ fixing the boundary which sends $\alpha^+$ to $\alpha^-$. Thus, we can think of $\Sigma$ as the ``double'' of $\Sigma^-$ and $\alpha$ as the ``double'' of $\alpha^-$ ($\Sigma=\Sigma^-\cup_\partial \Sigma^-$ and $\alpha = \alpha^-\cup_\partial \alpha^-$). Next, consider the analogous arc systems $\beta^-$ and $\beta^+$ in $\Sigma^-$ and $\Sigma^+$. Since $\alpha^-$ and $\beta^-$ are both arc systems for $\Sigma^-$, there exists a sequence of arc slides relating them by Lemma~\ref{lem:arcslides}. Let $\tilde\beta$ be the cut system obtained from $\beta$ by performing the sequence of contact handleslides corresponding to this sequence of arc slides. Then $\tilde\beta^+:=\tilde\beta\cap\Sigma^+$ is an arc system for $\Sigma^+$ whose end-points agree with those of $\beta^-=\alpha^-$, and thus agree with the end-points of $\alpha^+$. Thus there exists a diffeomorphism $\psi:\Sigma^+\to \Sigma^+$ which takes $\tilde\beta^+$ to $\alpha^+$. The monodromy of the open book is then $m:=\phi\circ\psi\circ\phi^{-1}:\Sigma^-\to \Sigma^-$.
		
		The mapping class group of a surface (with boundary) is generated by Dehn twists about essential simple closed curves. Thus $m$ can be written as a product of (left- and right-handed) Dehn twists.
		$$m=\tau_{c_n}^\pm\circ \dots \circ \tau_{c_1}^\pm.$$
		
		Let $\mathcal{A}^i$ be an arc system for $\Sigma^\pm$ (where we identify $\Sigma^+$ with $\Sigma^-$ via $\phi$), such that $c_i$ intersects $\tau_{c_{i-1}}^\pm\circ \dots \circ \tau_{c_1}^\pm(\mathcal{A}^i)$ in exactly one point (so one of the arcs intersects $c_i$ at one point, and the other arcs are disjoint from $c_i$). Let $v_i$ be the contact cut system for $(\Sigma,d)$ which is given by $\mathcal{A}^i$ on the $\Sigma^-$ side, and by $\tau_{c_{i-1}}^\pm\circ \dots \circ \tau_{c_1}^\pm(\mathcal{A}^i)$ on the $\Sigma^+$ side. Let $v_i'$ be the contact cut system for $(\Sigma,d)$ which is given by $\mathcal{A}^i$ on the $\Sigma^-$ side, and by $\tau_{c_i}^\pm\circ\tau_{c_{i-1}}^\pm\circ \dots \circ \tau_{c_1}^\pm(\mathcal{A}^i)$ on the $\Sigma^+$ side. 
		
		We make the following observations/claims:
		\begin{enumerate}
			\item \label{item:t1} For $i=1,\dots, n$, $v_i$ and $v_i'$ are related by a contact type 1 move.
			\item \label{item:t0a} $v$ and $v_1$ are related by contact type $0$ moves.
			\item \label{item:t0b} For $i=1,\dots, n-1$, $v_i'$ and $v_{i+1}$ are related by contact type $0$ moves.
			\item \label{item:t0c} $v_n'$ and $v'$ are related by contact type $0$ moves.
		\end{enumerate}
	
		Item (\ref{item:t1}) follows from the definitions of $v_i$ and $v_i'$.
		Items (\ref{item:t0a}) and (\ref{item:t0b}) follow from the definitions of $v$, $v_i$, $v_i'$, and $v'$ together with Lemma~\ref{lem:arcslides} and Lemma~\ref{lem:arcslidediffeo}. For item (\ref{item:t0c}), note that the cut system representing $v_n'$ is related to the cut system $\tilde{\beta}$ by contact type $0$ moves by Lemma~\ref{lem:arcslides} and Lemma~\ref{lem:arcslidediffeo}, and $\tilde{\beta}$ is related to cut system $\beta$ by contact type $0$ moves by definition of $\tilde{\beta}$. Since $\beta$ represents vertex $v'$, claim (\ref{item:t0c}) follows.
		
		Thus we obtain a path $(v,v_1,v_1',\dots, v_n,v_n',v')$ from $v$ to $v'$ in $CC^*(\Sigma,d)$ of contact type 0 moves and contact type 1 moves.
	\end{proof}

\section{Defining the Weinstein $\cL$-invariant and Lefschetz fibrations}
\label{s:LF}
%
%
%
	The primary goal of this section is to define our new complexity measure for Weinstein domains, the Weinstein $\cL$-invariant. This requires developing a process to obtain a Weinstein domain from an oriented path in a contact cut graph, and showing that every Weinstein domain can be obtained in this way. There are two intermediary stages in this process: multisections with divides and their diagrams and Lefschetz fibrations. Building off of these connections, in Section~\ref{ss:folded}, we will give a correspondence for smooth 4-manifolds via \emph{achiral} Lefschetz fibrations.
		
	Kirby and Thompson defined the $\cL$-invariant for closed smooth $4$-manifolds in terms of trisections and distances in the cut-graph \cite{KT18}. In the years following their work, the $\mathcal{L}$-invariant has been extended to various settings including for smooth manifolds with boundary \cite{CIMT21}, knotted surfaces in $B^4$ \cite{BCTT22}, and knotted surfaces in closed manifolds \cite{APTZ21}. These invariants utilize relative trisections, relative bridge trisection, and bridge trisections, respectively, which give rise to paths in an appropriately defined graph from which a length can be extracted.
	
	In Section~\ref{ss:LF}, from an oriented path $P$ in $CC(\Sigma,d)$ we will define a multisection with divides $\mathfrak{M}(P)$ supporting a Weinstein domain $W(P)$. Using this, we can define our measure of complexity for Weinstein domains.
	
	For any path $P$ in $CC(\Sigma,d)$, let $N_0(P)$ denote the number of type $0$ moves in the path.
	
	\begin{definition}
		Let $\mathfrak{M}$ be a multisection with divides with core $(\Sigma,d)$. We define the \emph{Weinstein $\cL$-invariant of $\mathfrak{M}$} to be
		$$\cL_{(\Sigma,d)}(\mathfrak{M}) := \min \{ N_0(P) \mid P \textrm{ is a path in }CC(\Sigma,d), \textrm{ such that } \mathfrak{M}(P) = \mathfrak{M} \}.$$
		If there is no path $P$ with $\mathfrak{M}(P)=\mathfrak{M}$ we set $\cL_{(\Sigma,d)}(\mathfrak{M})=\infty$.	
		
		Let $(W,\omega, V)$ be a Weinstein domain. We define the \emph{Weinstein $L$-invariant of $(W,\omega, V)$} to be
		$$\cL(W,\omega,V) := \min \{\cL_{(\Sigma,d)}(\mathfrak{M}) \mid \mathfrak{M} \textrm{ is a multisection with divides supporting } (W,\omega, V) \}.$$
		We also define the \emph{genus $ g$ $ L$-invariant of $ (W,\omega,V)$} to be \begin{align*}
			\cL_g(W,\omega,V):=\min \{\cL_{(\Sigma,d)}(\mathfrak{M}) \mid  & \mathfrak{M} \textrm{ is a multisection with divides}\\ &\textrm{supporting } (W,\omega, V), g(\Sigma)=g 	
		\}.\end{align*}
	\end{definition}
	
	\begin{remark}
		\begin{enumerate}
			\item It is possible to find a multisection with divides $\mathfrak{M}$ with $\cL_{(\Sigma,d)}(\mathfrak{M})=\infty$. This is because there are contact Heegaard splittings of $\#_k S^1\times S^2$ corresponding to open book decompositions whose monodromy has no factorization into right-handed Dehn twists~\cite{Wand,BEVHM,BW}. This means that the sector cannot be symplectically represented by an oriented path of contact type 1 moves.
			\item On the other hand, we will show in Theorem~\ref{thm:WeinsteinHasPath} that every Weinstein domain $(W,\omega, V)$ has \emph{finite} $\cL(W,\omega,V)$, by proving that every Weinstein domain is obtained from an oriented path in $CC(\Sigma,d)$ for some $(\Sigma,d)$ by our procedure.
		\end{enumerate}
	\end{remark}
	

\subsection{Lefschetz fibrations, Multisections with divides, and Paths in $CC(\Sigma,d)$} \label{ss:LF}

A Lefschetz fibration on a $4$-manifold (possibly with boundary) is a map $\pi:X\to S$ where $S$ is a surface such that for all critical points of $\pi$ there exist (orientation preserving) local complex coordinates $(z_1,z_2)$ on $X$ such that $\pi(z_1,z_2) = z_1^2+z_2^2$. When $X$ has boundary and $S$ is a disk, the Lefschetz fibration induces an open book decomposition on $\partial X$. Lefschetz fibrations and pencils have close connections to symplectic structures through work of Donaldson~\cite{Don99} and Gompf~\cite{GompfStipsicz,GompfLF}, which showed every symplectic manifold has a Lefschetz pencil and every Lefschetz pencil has a symplectic structure. For Weinstein domains, the correspondence is with ``allowable'' Lefschetz fibrations over a disk. Allowable means the vanishing cycles represent a non-zero class in $H_1(F)$ where $F$ is the fiber.

\begin{theorem}[\cite{LoiPie01,AkbulutOzbagci}] \label{thm:LFWeinsteincorrespondence}
	Every $4$-dimensional Weinstein domain $(W,\omega,V)$ admits a compatible allowable Lefschetz fibration $\pi:W\to D^2$ over the disk. Conversely, every 4-manifold $W$ admitting an allowable Lefschetz fibration $\pi:W\to D^2$ admits a compatible Weinstein structure which is unique up to Weinstein homotopy.
\end{theorem}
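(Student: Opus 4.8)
The statement splits into a forward direction (Weinstein domain $\Rightarrow$ allowable Lefschetz fibration over $D^2$) and a converse (allowable Lefschetz fibration over $D^2$ $\Rightarrow$ Weinstein structure, unique up to Weinstein homotopy), and I would treat these separately, following \cite{LoiPie01,AkbulutOzbagci}.

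\textbf{Forward direction.} The plan is to read a Lefschetz fibration off a Weinstein handle decomposition. Write $(W,\omega,V) = W_0 \cup (\text{critical }2\text{-handles along a Legendrian link }\Lambda)$, where $W_0$ is the subcritical part, built from one $0$-handle and $k$ $1$-handles, so $W_0 = \natural_k S^1\times B^3$ with convex boundary $\#_k S^1\times S^2$ carrying its unique tight contact structure. That contact structure is supported by a planar open book with page $P_0$ a disk with $k$ bands and identity monodromy, which exhibits $W_0$ itself as a critical-point-free Lefschetz fibration over $D^2$ with fiber $P_0$. Next I would Legendrian-isotope $\Lambda$ onto a single page; this is not possible in general without first positively stabilizing the open book several times, but positive stabilizations only add bands to $P_0$ and leave the supported contact structure unchanged, so after enough of them it can be done. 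Each component of $\Lambda$ now lies on the stabilized page $F$, and since the $2$-handles are attached with framing one less than the page framing, attaching them adds Lefschetz critical points with these components as vanishing cycles, producing a Lefschetz fibration $\pi\colon W\to D^2$ with fiber $F$. Allowability — each vanishing cycle non-nullhomologous in $F$ — is arranged by care in the stabilization/isotopy step, or by one extra positive stabilization per offending curve. Finally I would check that the Weinstein structure induced by $\pi$ agrees up to Weinstein homotopy with the original one, since both are determined by the Legendrian-isotopy class of $\Lambda$.

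\textbf{Converse direction.} Given an allowable $\pi\colon W\to D^2$ with fiber a compact surface $F$ with $\partial F\neq\emptyset$, the plan is the Thurston--Gompf patching construction as in \cite{GompfLF}: $F$ carries an exact area form $\eta = d\lambda_F$, and over the regular values one glues the vertical forms by a partition of unity, using the standard holomorphic model $z_1^2+z_2^2$ near the critical values; adding a large multiple of the pullback of an area form on $D^2$ gives a symplectic form on $W$ positive on fibers. To promote this to a \emph{Weinstein} structure I would make the form exact and build a Liouville vector field that is simultaneously gradient-like for a Morse function adapted to $\pi$ (with fiber-direction Morse function from a Weinstein structure on $F$) and outward-pointing along both the vertical boundary $\pi^{-1}(\partial D^2)$ and the horizontal boundary $\partial F\times D^2$; allowability is exactly what lets this be done near the critical fibers. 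Uniqueness up to Weinstein homotopy then follows by extracting from $\pi$ a Weinstein handle decomposition ($0$- and $1$-handles from a handle decomposition of $F$, a critical $2$-handle per vanishing cycle attached along its Legendrian realization), noting this Legendrian link depends on the Lefschetz data only up to Legendrian isotopy, and checking that the auxiliary choices (area forms, partition of unity, Morse functions) move the Weinstein structure only within its homotopy class.

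\textbf{Main obstacle.} In the forward direction the delicate balance is between positively stabilizing enough to place $\Lambda$ on a page and retaining enough control of the homology classes of the vanishing cycles and of the induced contact structure to guarantee allowability without changing $W$. In the converse direction the analogous difficulty is producing a single Liouville vector field that is gradient-like and outward-pointing at the critical fibers and along both boundary pieces at once, and then verifying that all the auxiliary choices are absorbed by Weinstein homotopy. These are the technical hearts of \cite{LoiPie01,AkbulutOzbagci}, which I would cite rather than reprove.
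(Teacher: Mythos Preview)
The paper does not prove this theorem: it is quoted from \cite{LoiPie01,AkbulutOzbagci} as background and used as a black box (see the sentence ``We obtain (\ref{i:LFtoW}) from (\ref{i:PtoLF}) by Theorem~\ref{thm:LFWeinsteincorrespondence}''). So there is no ``paper's own proof'' to compare against.

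That said, your outline is a faithful sketch of the cited arguments. The forward direction is essentially the Akbulut--Ozbagci proof: realize the subcritical part as a trivial Lefschetz fibration, positively stabilize the boundary open book until the Legendrian attaching link of the $2$-handles can be placed on a page, and then each $(-1)$-framed $2$-handle becomes a Lefschetz critical point with homologically essential vanishing cycle. The converse is likewise standard; in fact the cleanest way to phrase it (and the way the paper implicitly uses it) is exactly your last paragraph: read off from $\pi$ a Weinstein handle decomposition --- $0$- and $1$-handles from $F$, a critical $2$-handle per vanishing cycle attached along its Legendrian realization in the page --- and invoke that a Weinstein domain is determined up to Weinstein homotopy by the Legendrian isotopy class of this attaching data. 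Your Thurston--Gompf patching description is correct but more analytic than necessary here; citing \cite{LoiPie01,AkbulutOzbagci} as you do at the end is the right move.
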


\begin{remark}
While symplectic geometry comes with natural orientations, and thus requires the local model coordinates for the Lefschetz critical points to be orientation preserving, in the smooth category one can relax the orientation preserving condition. The resulting maps $\pi:X\to S$ where critical points have the same models but are allowed to use orientation reversing coordinate charts are called \emph{achiral Lefschetz fibrations}.
\end{remark}

Given a Lefschetz fibration over $D^2$, with an order of the critical values, there is a natural multisection with divides associated to it. We build this by choosing two points $x_0$ and $x_1$ on the boundary of $D^2$, and a sequence of paths $\gamma_i$ in the disk connecting these two points such that the paths cut the disk into bigons where each bigon contains a unique critical value ordered left to right as in Figure~\ref{fig:LF}. Let $F=F_0=\pi^{-1}(x_0)$ 
and $F_1=\pi^{-1}(x_1)$. Then $W$ is supported by a multisection with divides whose sectors $X_1,\dots, X_n$ are the preimages of the bigon regions and the boundary contact handlebodies $H_0,H_1,\dots, H_n$ are the preimages of the paths connecting $x_0$ with $x_1$ as shown in Figure~\ref{fig:LF}. This was shown to be a multisection with divides in~\cite[Theorem 4.2]{IslambouliStarkston}. We will say that this is the \emph{multisection with divides compatible with the Lefschetz fibration}.

\begin{figure}
	\centering
	\includegraphics[scale=0.2]{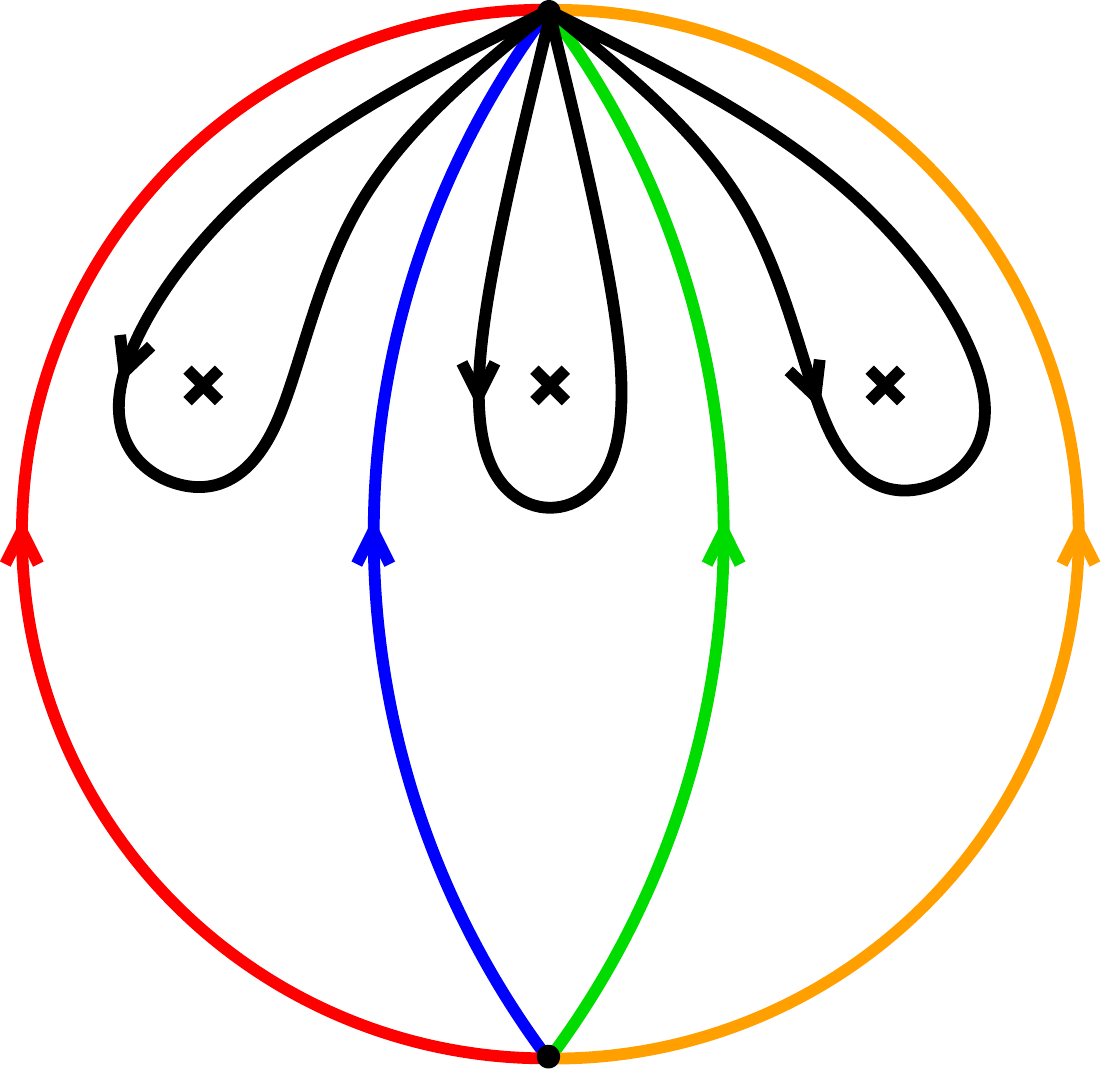}
	\caption{Building a multisection with divides compatible with a Lefschetz fibration.}
	\label{fig:LF}
\end{figure}

\begin{remark}
	If one does not care about the symplectic structure, the same construction produces a multisection with boundary from an achiral Lefschetz fibration. The cut systems for the handlebodies are still compatible with the dividing set (contact cut systems), but the contact structures on $\#_k S^1\times S^2=H_i\cup H_{i+1}$ will not necessarily be tight, and the 4-dimensional sectors will not generally support compatibly gluing symplectic structures. We will call such a multisection an \emph{achiral multisection with divides}.
\end{remark}

To pass between multisections with divides and paths in the contact cut graph, we need to use \emph{diagrams} for the multisections with divides. The following lemma provides a description of a collection of multisection with divides {diagrams} representing the multisection with divides compatible with a Lefschetz fibration.

\begin{lemma}\label{l:LFMDdiagrams}
	Consider a Lefschetz fibration over a disk with fiber $F$ and vanishing cycles $V_1,\dots, V_n$. Then any of the following is a {diagram} for the multisections with divides compatible with this Lefschetz fibration:
	
	Let $\Sigma = F\cup_{\partial F} \overline{F}$ with dividing set $d=\partial F$. Choose an arc system $\mathcal{A}$ for $F$. Let $C^0=\mathcal{A}\cup \overline{\mathcal{A}}$. For $i=1,\dots, n$, let $C^i = \tau_{T_i}C^{i-1}$ where $T_i$ is one of the following two options:
	\begin{enumerate}
		\item $T_i$ lies in $F$ and $T_i=V_i$ or
		\item $T_i$ lies in $\overline{F}$  and $T_i=(\tau_{V_{i-1}}\circ \cdots \tau_{V_1})^{-1}(V_i)=\tau_{V_1}^{-1}\circ \cdots \circ \tau_{V_{i-1}}^{-1}(V_i)$ with respect to the $F$ orientation. 
		
		Equivalently,
		$$T_i = \tau_{\tau_{V_1}\circ \cdots \circ \tau_{V_{i-2}}(V_{i-1})}\circ \tau_{\tau_{V_1}\circ \cdots \circ \tau_{V_{i-3}}(V_{i-2})} \circ \cdots \circ \tau_{\tau_{V_1}(V_2)}\circ \tau_{V_1}(V_i)$$
		with respect to the $\overline{F}$ orientation.
	\end{enumerate}
\end{lemma}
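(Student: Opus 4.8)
The plan is to verify that each proposed tuple $(\Sigma, d, C^0, C^1, \dots, C^n)$ satisfies the three bullet conditions of Definition~\ref{def:MultisectionDivides}, and that it represents the \emph{same} multisection with divides as the one built directly from the Lefschetz fibration in Figure~\ref{fig:LF}. First I would set up notation: the fiber over $x_0$ is $F$, so $\Sigma = F \cup_{\partial F}\overline F$ is the double with dividing set $d = \partial F$, and this matches the construction where the handlebody $H_i = \pi^{-1}(\text{path } \gamma_i)$ has boundary the double of the fiber. The handlebody $H_i$ is built from the fiber $F$ (or $\overline F$, depending on which side of $x_0$ versus $x_1$ one reads the path) together with the thimbles of the critical points lying between consecutive paths. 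Concretely, a compressing system for $H_i$ is obtained from a compressing system for the trivial handlebody $F \times [0,1]$ (whose belt circles are a chosen arc system $\mathcal A$, doubled) by surgering along the vanishing cycles that the path $\gamma_i$ sweeps past. This is exactly the content of \cite[Theorem 4.2]{IslambouliStarkston}, which I would cite for the underlying geometric fact that these preimages assemble into a multisection with divides; my job is the diagrammatic bookkeeping of \emph{which curves} appear.

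The key computation is tracking the monodromy. Reading the critical values left to right, the page $F_1 = \pi^{-1}(x_1)$ is identified with $F_0 = F$ via the total monodromy $\tau_{V_n}\circ\cdots\circ\tau_{V_1}$ (in the appropriate convention). The handlebody $H_i$ sits ``between'' $H_{i-1}$ and $H_{i+1}$, and the difference between the $C^{i-1}$ and $C^i$ cut systems should be precisely a Dehn twist about the $i$-th vanishing cycle $V_i$ — but expressed in whichever page coordinate one is using to draw the diagram on the fixed surface $\Sigma$. Option (1) is the description where one transports everything to the $F$-side (the $x_0$ page), so the twisting curve is literally $V_i \subset F$. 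Option (2) is the description where one instead records the curve on the $\overline F$-side; pushing $V_i$ across via the partial monodromy $\tau_{V_1}\circ\cdots\circ\tau_{V_{i-1}}$ and using the conjugation identity $\tau_{\phi(c)} = \phi\circ\tau_c\circ\phi^{-1}$ gives the two displayed equivalent formulas for $T_i$. I would verify the equivalence of the two formulas in Option (2) by repeatedly applying this conjugation identity: $\tau_{V_1}^{-1}\circ\cdots\circ\tau_{V_{i-1}}^{-1}(V_i)$, when one conjugates the whole twist $\tau_{T_i}$ back and forth, telescopes into the stated product of twists about the forward-pushed curves $\tau_{V_1}\circ\cdots\circ\tau_{V_{k-1}}(V_k)$. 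Then I would check that $\tau_{T_i}C^{i-1}$ really is a contact cut system: since $T_i$ lies in $F$ (resp. $\overline F$), it is disjoint from the dividing set $d = \partial F$, so the twist restricts to each of $\Sigma^\pm$ as a diffeomorphism fixing the boundary, and by Lemma~\ref{lem:arcslidediffeo} (or simply because a boundary-fixing diffeomorphism carries arc systems to arc systems) $C^i \cap \Sigma^\pm$ is again an arc system. Finally, each consecutive pair $(\Sigma, d, C^{i-1}, C^i)$ is a contact Heegaard splitting for the standard tight contact structure on $\#_{k_i}S^1\times S^2$ because it arises as the boundary of the symplectic sector $X_i = \pi^{-1}(\text{bigon})$, which by \cite[Theorem 4.2]{IslambouliStarkston} is a Weinstein handlebody $\natural_{k_i}S^1\times D^3$ whose convex boundary carries this tight structure.

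I expect the main obstacle to be the orientation/convention bookkeeping: tracking whether the monodromy composes as $\tau_{V_n}\circ\cdots\circ\tau_{V_1}$ or the reverse, whether the relevant page is $F$ or $\overline F$ for a given handlebody $H_i$, and getting the signs right so that the curve recorded on the $\overline F$-side is $(\tau_{V_{i-1}}\circ\cdots\circ\tau_{V_1})^{-1}(V_i)$ rather than its inverse-direction pushoff — the ``with respect to the $F$ orientation'' versus ``with respect to the $\overline F$ orientation'' clauses are doing real work and must be pinned down carefully against Figure~\ref{fig:LF}. A secondary point requiring care is that the two options for $T_i$ can be mixed from step to step (the lemma says ``$T_i$ is one of the following two options'' independently for each $i$), so I must confirm that the contact Heegaard splitting condition for the pair $(C^{i-1}, C^i)$ depends only on the fact that $C^i = \tau_{V_i\text{-equivalent curve}}C^{i-1}$ and not on which option was chosen at steps $j \neq i$; this follows because the \emph{isotopy class on $\Sigma$} of the resulting cut system is the same regardless — moving a twisting curve from the $F$-side to the $\overline F$-side via the monodromy diffeomorphism does not change the isotopy class of the image cut system, only the formula by which we name it. Once these conventions are fixed, the proof is a direct unwinding of the construction in Figure~\ref{fig:LF} together with the conjugation identity for Dehn twists.
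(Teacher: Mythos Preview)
Your overall approach matches the paper's: set up the parallel transport/monodromy relation $\psi_i = \tau_{V_i}\circ\psi_{i-1}$ between the paths $\gamma_i$, verify that each $C^i$ defines the handlebody $H_i = \pi^{-1}(\gamma_i)$, and handle the equivalence of the two formulas in option (2) via the conjugation identity $\tau_{\phi(c)} = \phi\tau_c\phi^{-1}$. The paper does exactly this, introducing explicit maps $\psi_i:F_1\to F$ and checking inductively that $\psi_i(\mathcal{B}^i)=\mathcal{A}^i$ (where $\mathcal{A}^i=C^i\cap F_0$, $\mathcal{B}^i=C^i\cap F_1$) in each of the two cases.

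There is one genuine error in your handling of the ``mixing'' issue. You write that the choice of option at step $j\neq i$ does not matter because ``the isotopy class on $\Sigma$ of the resulting cut system is the same regardless.'' This is false: options (1) and (2) produce \emph{different} cut systems on $\Sigma$. In option (1) the twist is supported in $F$, so $C^i\cap\overline{F}=C^{i-1}\cap\overline{F}$; in option (2) the twist is supported in $\overline{F}$, so $C^i\cap F = C^{i-1}\cap F$. These are not isotopic in general. What is true---and what must actually be proved---is that both cut systems define the \emph{same handlebody} $H_i$. The paper establishes this by the inductive verification $\psi_i(\mathcal{B}^i)=\mathcal{A}^i$, carried out separately in each case; the Case 2 computation genuinely uses the inductive hypothesis (that $C^{i-1}$ already defines $H_{i-1}$) together with the specific formula for $T_i$ in terms of the \emph{previous} vanishing cycles. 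Your proposed shortcut does not work, and the dependence on prior choices is real at the level of cut systems---it only disappears at the level of handlebodies after doing the calculation.
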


\begin{proof}
	We let $F_0$ be the base fiber over the point in the boundary of the disk where the vanishing cycles are measured as the monodromy around a loop based at $\pi(F_0)$ going counterclockwise around the $i^{th}$ critical value as in Figure~\ref{fig:LF}, such that the concatentation of the ordered loops is homotopic to the boundary of the disk.
	
	Let $F=F_0$. Let $\gamma_i$ be the path such that $H_i=\pi^{-1}(\gamma_i)$. Then there is a diffeomorphism $\pi^{-1}(\gamma_i)\cong F\times [0,1]$ such that $F_0$ is identified with $F$ by the identity. Restricting this diffeomorphism to $F_1$ gives a diffeomorphism $\psi_i:F_1\to F$ (by identifying $F\times\{1\}$ with $F$ by the projection), namely parallel transport along $\gamma_i$.
	
	Choosing any arc system $\mathcal{A}$ for $F$, $\mathcal{A}\times[0,1]$ is a system of compressing disks for $F\times[0,1]$, which is a contact cut system for $d=\partial F\times\{\frac{1}{2}\}$. Conversely, any contact cut system for $(F\times[0,1],d)$ has the form $\mathcal{A}\times[0,1]$ for some arc system $\mathcal{A}$. 
	
	By definition, for any contact cut system $C^i$ for $(\Sigma=F_0\cup_\partial F_1,d=\partial F_0)$, $\mathcal{A}^i := C^i\cap F_0$ is an arc system for $F_0$ and $\mathcal{B}^i = C^i\cap F_1$ is an arc system for $F_1$. Identifying $H_i$ with $F\times[0,1]$, we see that the contact cut system defines $H_i$ if and only if $\psi_i(\mathcal{B}^i) = \mathcal{A}^i$.
	
	The monodromy about a loop based at $x_0$, contained in $X_i$, which goes once counter-clockwise around the critical value is $\tau_{V_i}$. (If the Lefschetz fibration were achiral this could be $\tau_{V_i}^{-1}$.) Since the concatenation of the path on the left side of the $i^{th}$ bigon with such a loop is homotopic to the path on the right side of the $i^{th}$ bigon, we obtain
	\begin{equation}\label{eqn:monodromy}
		\psi_i = \tau_{V_i}\circ \psi_{i-1} = \tau_{V_i} \circ \cdots \circ \tau_{V_1} \circ \psi_0
	\end{equation}
	We will use this formula to verify that the multisection with divides diagrams in the statement of the lemma have the property that $\psi_i(\mathcal{B}^i)=\mathcal{A}^i$ for $i=1,\dots, n$, for some identification of $\Sigma$ with $F_0\cup_\partial F_1$, and thus these diagrams represent the multisection with divides compatible with the Lefschetz fibration.
	
	We use the identification $id\cup \psi_0: F_0\cup_\partial F_1 \to \Sigma = F\cup_\partial \overline{F}$ to identify the diagrams in the statement of the lemma with the core surface of the multisection with divides compatible with the Lefschetz fibration. Note that the orientation on $F_1$ disagrees with the orientation of $\overline{F}\subset \Sigma$ under this identification. 
	
	Since $C^0 = \mathcal{A}\cup \overline{\mathcal{A}}$, $\mathcal{A}^0 = \mathcal{A}$ and $\mathcal{B}^0\subset F_1$ is given by $\psi_0^{-1}(\mathcal{A})$. Thus, by definition $\psi_0(\mathcal{B}_0) = \mathcal{A}_0$. This establishes a base case for induction on $i$. 
	
	We inductively assume that $C^{i-1}$ is a cut system for $H^{i-1}$. Thus $\psi_{i-1}(\mathcal{B}_{i-1}) = \mathcal{A}_{i-1}$. From the statement of the lemma $C^i = \tau_{T_i}(C^{i-1})$ for some $T_i$ disjoint from the dividing set.

	\textbf{Case 1:} $T_i$ lies on the $F$ side and $T_i=V_i$. 
	
	Then
	$$\mathcal{B}^i=\mathcal{B}^{i-1}=\psi_{i-1}^{-1}(\mathcal{A}^{i-1})$$
	and 
	$$\mathcal{A}^i = \tau_{V_i}(\mathcal{A}^{i-1}).$$
	Thus, using equation~\ref{eqn:monodromy}, we obtain:
	$$\mathcal{A}^i = \tau_{V_i}\circ \psi_{i-1}(\mathcal{B}^{i-1}) = \tau_{V_i}\circ \psi_{i-1}(\mathcal{B}^i) = \psi_i(\mathcal{B}^i).$$
	So $C^i$ defines the contact handlebody $H_i$.
	
	\textbf{Case 2:} $T_i$ lies on the $\overline{F}$ side and $T_i=(\tau_{V_{i-1}}\circ \cdots \tau_{V_1})^{-1}(V_i)$ (where the Dehn twists specified here are considered right-handed with respect to the $F$ orientation).
	
	In this case,
	$$\mathcal{A}^i= \mathcal{A}^{i-1}$$
	and
	$$\mathcal{B}^i = \tau_{\psi_0^{-1}(T_i)}^{-1}(\mathcal{B}^{i-1})$$
	
	
	Since $T_i = (\tau_{V_{i-1}}\circ \cdots \tau_{V_1})^{-1}(V_i)$, we have that
	$$V_i = (\tau_{V_{i-1}}\circ \cdots \tau_{V_1})\circ\psi_0 (\psi_0^{-1}(T_i)) = \psi_{i-1}(\psi_0^{-1}(T_i))$$
	by formula~\ref{eqn:monodromy}. Therefore
	$$\psi_i(\mathcal{B}^i) = \tau_{V_i}\circ \psi_{i-1}(\mathcal{B}^i) = \tau_{V_i}\circ \psi_{i-1} \circ \tau_{\psi_0^{-1}(T_i)}^{-1}(\mathcal{B}^{i-1}) = \tau_{V_i}\circ \psi_{i-1} \circ \tau_{\psi_0^{-1}(T_i)}^{-1} \circ \psi_{i-1}^{-1}(\mathcal{A}^{i-1}) $$
	$$ = \tau_{V_i}\circ \tau_{\psi_{i-1}\circ \psi_0^{-1}(T_i)}^{-1} (\mathcal{A}^{i})= \tau_{V_i}\circ \tau_{V_i}^{-1}(\mathcal{A}^i) = \mathcal{A}^i.$$
	So $C^i$ defines the contact handlebody $H^i$ in this case as well.
		
	
	If $T_i\subset \overline{F}$ satisfies 
	$$T_i=(\tau_{V_{i-1}}\circ \cdots \tau_{V_1})^{-1}(V_i)=\tau_{V_1}^{-1}\circ \cdots \circ \tau_{V_{i-1}}^{-1}(V_i)$$
	with respect to the $F$ orientation, then
	$$T_i = \tau_{V_1}\circ \cdots \circ \tau_{V_{i-1}}(V_i)$$
	with respect to the $\overline{F}$ orientation. 
	
	To complete the last part of the lemma, we will show that
	$$\tau_{V_1}\circ \cdots \circ \tau_{V_{i-1}} = \tau_{\tau_{V_1}\circ \cdots \circ \tau_{V_{i-2}}(V_{i-1})}\circ \tau_{\tau_{V_1}\circ \cdots \circ \tau_{V_{i-3}}(V_{i-2})} \circ \cdots \circ \tau_{\tau_{V_1}(V_2)}\circ \tau_{V_1}.$$
	
	
	This is implied by the conjugation formula for Dehn twists as follows.
	$$\tau_{V_1}\circ \cdots \circ \tau_{V_{i-1}} =  \tau_{V_1}\circ \tau_{V_2}\circ  \cdots \circ \tau_{V_{i-1}}\circ \tau_{V_1}^{-1}\circ \tau_{V_1}=$$
	$$(\tau_{V_1}\circ \cdots \circ \tau_{V_{i-2}} \circ \tau_{V_{i-1}} \circ \tau_{V_{i-2}}^{-1} \circ \cdots \circ \tau_{V_1}^{-1}) \circ (\tau_{V_1} \circ \cdots \circ \tau_{V_{i-3}}\circ \tau_{V_{i-2}}\circ \tau_{V_{i-3}}^{-1}\circ \cdots \circ \tau_{V_1}^{-1}) \circ \cdots \circ (\tau_{V_1}\circ \tau_{V_2}\circ \tau_{V_1}^{-1}) \circ  \tau_{V_1} $$
	$$=\tau_{\tau_{V_1}\circ \cdots \circ \tau_{V_{i-2}}(V_{i-1})}\circ \tau_{\tau_{V_1}\circ \cdots \circ \tau_{V_{i-3}}(V_{i-2})} \circ \cdots \circ \tau_{\tau_{V_1}^{-1}(V_2)}\circ \tau_{V_1}.$$
	
\end{proof}

\begin{remark}\label{rk:achiral}
	If some of the Dehn twists relating $\mathcal{C}^{i-1}$ to $\mathcal{C}^i$ are left-handed, then we can consider the achiral multisection with divides compatible with the achiral Lefschetz fibration. It will be supported by analogous diagrams following the same proof, replacing $\tau_A$ with $\tau_A^{-1}$ for the negatively oriented Lefschetz singularities.
\end{remark}

Now we outline our key procedure.

\begin{procedure} \label{mainprocedure}
	Given an oriented path $P$ in $CC(\Sigma,d)$, we obtain
	\begin{enumerate}
		\item \label{i:PtoDiag} \textbf{a multisection with divides} $\mathfrak{M}(P)$ with a particular \textbf{diagram}, $\mathfrak{MD}(P)$, by interpreting the vertices of $P$ as contact handlebody diagrams,
		\item \label{i:PtoLF} \textbf{a Lefschetz fibration}, $LF(P)$, (unique up to Lefschetz fibration isomorphism) such that the multisection with divides compatible with $LF(P)$ can be represented by the diagram $\mathfrak{MD}(P)$, and
		\item \label{i:LFtoW} \textbf{a Weinstein domain}, $W(P)$, (unique up to Weinstein homotopy) supported by the Lefschetz fibration $L(P)$.
	\end{enumerate}
\end{procedure}

We obtain (\ref{i:LFtoW}) from (\ref{i:PtoLF}) by Theorem~\ref{thm:LFWeinsteincorrespondence}. Step (\ref{i:PtoLF}) will be justified in Theorem~\ref{thm:PtoLF} below. To justify Step (\ref{i:PtoDiag}), we want to see how the vertices of an oriented path $P$ in $CC(\Sigma,d)$ actually give rise to a multisection with divides diagram. We need to check that adjacent pairs of vertices represent a contact Heegaard splitting of $\#_k S^1\times S^2$ with the \emph{tight} contact structure. When the vertices are related by a contact type $0$ move, the two underlying contact handlebodies are the same so no new sector is needed. 
When the vertices are related by an oriented contact type $1$ move, the contact Heegaard splitting supports an open book decomposition whose monodromy is a single right handed Dehn twist about a curve. Thus the supported contact structure is Weinstein fillable so the contact structure is tight. (That vertices related by a (general) type 1 move give a Heegaard splitting of $\#_{g-1} S^1\times S^2$ is based on classical results of Waldhausen~\cite{Wald68} and Haken~\cite{Hak68}, see~\cite[Section 2.1]{Isl21}.) This verifies that $\mathfrak{MD}(P)$ is a multisection with divides diagram. By~\cite[Proposition 3.4]{IslambouliStarkston}, this diagram represents a multisection with divides $\mathfrak{M}(P)$ on a Weinstein domain $W(P)$ (note that the definition of standard Weinstein cobordant of~\cite[Proposition 3.4]{IslambouliStarkston} is precisely a contact type $1$ move).
 
\begin{theorem}\label{thm:PtoLF}
	Let $P$ be an oriented (resp. unoriented) path in $CC(\Sigma, d)$. There exists an (achiral) allowable Lefschetz fibration $LF(P)$ with fiber $\Sigma^+$ such that $\mathfrak{MD}(P)$ is a diagram for the (achiral) multisection with divides compatible with $LF(P)$.
\end{theorem}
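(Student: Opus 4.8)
The plan is to run the recipe of Lemma~\ref{l:LFMDdiagrams} in reverse, constructing $LF(P)$ by induction on the number of edges of $P = (v_0, v_1, \dots, v_m)$. Write $F := \Sigma^+$, so that $\overline{F} \cong \Sigma^-$ by definition of a standard contact dividing set and $d = \partial F$. For the base case $m = 0$, set $\mathcal{A} := v_0 \cap \Sigma^+$ and use a boundary-fixing diffeomorphism carrying $v_0 \cap \Sigma^-$ onto $\overline{\mathcal{A}}$ — exactly the doubling identification from the proof of Theorem~\ref{thm:connected} — to identify $(\Sigma, d)$ with $(F \cup_{\partial F} \overline{F}, \partial F)$ and $v_0$ with $C^0 = \mathcal{A} \cup \overline{\mathcal{A}}$; then $LF(P)$ is the trivial fibration $\Sigma^+ \times D^2 \to D^2$ with no critical values, whose compatible multisection with divides is diagrammed by $(\Sigma, d, v_0)$ via Lemma~\ref{l:LFMDdiagrams} with $n = 0$. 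We carry the inductive hypothesis in the strengthened form that $\mathfrak{MD}(P')$ is realized \emph{concretely}: there is a system of cutting paths $\gamma_0, \dots, \gamma_\ell$ for the base disk, with each bigon region containing at most one critical value, with $\pi^{-1}(\gamma_i)$ the handlebody cut out by $v_i$, and with the parallel-transport maps satisfying the monodromy identity~\eqref{eqn:monodromy}.

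For the inductive step, examine the last edge $v_{m-1} \to v_m$ of $P$. If it is a contact type $0$ move, then $v_{m-1}$ and $v_m$ present the same contact handlebody: their intersections with $\Sigma^+$ and with $\Sigma^-$ differ by a common sequence of arc slides (Lemmas~\ref{lem:arcslides} and~\ref{lem:arcslidediffeo}). In that case keep $LF(P) := LF(P')$ and append one more cutting path $\gamma_m$, cobounding with $\gamma_{m-1}$ a bigon containing no critical value; the preimage of that bigon is $\cong \Sigma^+ \times D^2 \cong \natural_g S^1 \times B^3$, the trivial sector a type $0$ move contributes, the new boundary handlebody $\pi^{-1}(\gamma_m) \cong \Sigma^+ \times [0,1]$ admits $v_m$ as a valid cut system (being type-$0$-related to $v_{m-1}$), and $\psi_m = \psi_{m-1}$. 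This uses a harmless extension of Lemma~\ref{l:LFMDdiagrams} permitting a contact type $0$ move between consecutive $C^i$.

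If instead the last edge is a contact type $1$ move along a curve $W \subset \Sigma \setminus d$, append a bigon with exactly one new critical value, bounded by $\gamma_{m-1}$ and a new path $\gamma_m$. When $W \subset \Sigma^+ = F$, declare the new vanishing cycle to be $V := W$ (positive if the twist is right-handed, negative otherwise): this is option (1) of Lemma~\ref{l:LFMDdiagrams}, and it makes $\pi^{-1}(\gamma_m)$ the handlebody cut out by $\tau_W(v_{m-1}) = v_m$. When $W \subset \Sigma^- \cong \overline{F}$, let $\psi = \tau_{V_n} \circ \cdots \circ \tau_{V_1}$ be the monodromy accumulated from the earlier type $1$ edges (empty bigons contribute nothing, by~\eqref{eqn:monodromy}), and declare $V := \psi(W)$, so that $W = \psi^{-1}(V) = (\tau_{V_n} \circ \cdots \circ \tau_{V_1})^{-1}(V)$: this is option (2) of the lemma, and again $\pi^{-1}(\gamma_m)$ is cut out by $v_m$. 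In either case $LF(P)$ is $LF(P')$ with this critical value appended, the underlying diagram is $(\Sigma, d, v_0, \dots, v_m) = \mathfrak{MD}(P)$, and the strengthened hypothesis persists; so by induction $\mathfrak{MD}(P)$ is a diagram for the multisection with divides compatible with $LF(P)$. (Different choices of the initial doubling identification change all the vanishing cycles by a common diffeomorphism, so $LF(P)$ is well-defined up to isomorphism.) For the unoriented statement, run the identical induction using the achiral form of Lemma~\ref{l:LFMDdiagrams} from Remark~\ref{rk:achiral}, recording a negative Lefschetz singularity for each left-handed twist and producing an achiral Lefschetz fibration.

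Finally, $LF(P)$ is (achiral) allowable: the fiber $\Sigma^+$ has nonempty boundary $d$, and every twisting curve $W$ of a contact type $1$ move meets some cut-system curve geometrically — hence algebraically — once, so meets the corresponding arc of $\Sigma^\pm$ once and therefore has $[W] \neq 0$ in $H_1(\Sigma^\pm) \cong H_1(\Sigma^+)$; since each vanishing cycle is a diffeomorphic image of such a $W$, all vanishing cycles are homologically nontrivial. The part I expect to be the main obstacle is the bookkeeping: keeping $\Sigma^+$, $\Sigma^- = \overline{\Sigma^+}$, the double $\Sigma$, and the right/left-handedness conventions for Dehn twists coherently aligned, and confirming that inserting the empty bigons produced by type $0$ moves really leaves the monodromy identity~\eqref{eqn:monodromy} — and hence the case analysis of Lemma~\ref{l:LFMDdiagrams} — valid at every type $1$ step. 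This is the same monodromy-tracking carried out in the proof of Theorem~\ref{thm:connected}, so I expect it to be technical rather than conceptually difficult.
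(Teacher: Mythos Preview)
Your proposal is correct and follows essentially the same approach as the paper: both extract the twisting curve $T_i$ from each contact type $1$ edge, set $V_i=T_i$ when $T_i\subset\Sigma^+$ and $V_i=\tau_{V_{i-1}}\circ\cdots\circ\tau_{V_1}(T_i)$ when $T_i\subset\Sigma^-$, invoke Lemma~\ref{l:LFMDdiagrams} to verify that the resulting Lefschetz fibration has compatible multisection-with-divides diagram $\mathfrak{MD}(P)$, and argue allowability from the single intersection of $T_i$ with the cut system. The only cosmetic difference is that the paper first collapses type $0$ subpaths to a sequence of handlebodies $H_0,\dots,H_n$, whereas you carry them along inductively as empty bigons; these are equivalent since type $0$ moves contribute trivially to the monodromy.
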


\begin{proof}
	Given a path $P$ in the contact cut graph, vertices related by contact type $0$ moves correspond to the same contact handlebody, whereas contact type $1$ moves change the underlying contact handlebody. Let $H_0,H_1,\dots, H_n$ denote the sequence of contact handlebodies induced by the path $P$ by collapsing subpaths connected only by contact type $0$ moves. We will construct an (achiral) {allowable} Lefschetz fibration, such that the compatible (achiral) multisection with divides has contact handlebodies $H_0,H_1,\dots, H_n$. To do this, it suffices for each contact type $1$ move connecting $(H_{i-1},\xi_{i-1})$ to $(H_i,\xi_i)$, to identify a vanishing cycle $V_i$ {which is homologically essential}, such that the Lefschetz fibration over a disk with a single (achiral) Lefschetz singularity with vanishing cycle $V_i$ has induced contact Heegaard splitting on the boundary $(H_{i-1},\xi_{i-1})\cup_{\Sigma} (H_i,\xi_i)$. For this, we know that the contact cut systems $C_{i-1}$ and $C_i$ are related by a Dehn twist about a curve $T_i$ disjoint from $d$. If $T_i$ is contained in $\Sigma^+$, we let $V_i=T_i$, and if $T_i$ is contained in $\Sigma^-$ we let $V_i=\tau_{V_{i-1}}^\pm\circ \cdots \circ \tau_{V_1}^\pm(T_i)$ (here the signs depends on whether the $j^{th}$ contact type $1$ move represented a positive or negative Dehn twist about $T_j$ for $1\leq j \leq i-1$). Observe that $V_i$ is homologically essential because it is a dual curve in some arc system.
	By Lemma~\ref{l:LFMDdiagrams}, this path gives a diagram representing the (achiral) multisection with divides compatible with the (achiral) Lefschetz fibration.
\end{proof}


%
%

Finally, we show that every Weinstein domain arises as $W(P)$ for some path $P$.
This path is not necessarily unique.

\begin{theorem} \label{thm:WeinsteinHasPath}
	For any Weinstein domain $(W,\omega, V)$ (up to Weinstein homotopy), there exists a pair $(\Sigma,d)$ and path $P\in CC(\Sigma,d)$ such that $(W,\omega, V)=W(P)$.
\end{theorem}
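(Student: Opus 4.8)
The plan is to reverse‑engineer Procedure~\ref{mainprocedure}: take a Lefschetz fibration for $W$ supplied by Theorem~\ref{thm:LFWeinsteincorrespondence}, exhibit a path $P$ in a suitable $CC(\Sigma,d)$ whose associated Lefschetz fibration $LF(P)$ recovers it, and then quote the uniqueness half of Theorem~\ref{thm:LFWeinsteincorrespondence}. First I would apply Theorem~\ref{thm:LFWeinsteincorrespondence} to fix a compatible allowable Lefschetz fibration $\pi\colon W\to D^2$ with fiber $F$ and vanishing cycles $V_1,\dots,V_n$, each homologically essential in $F$. Then set $\Sigma=F\cup_{\partial F}\overline{F}$ and $d=\partial F$, so that $(\Sigma,d)$ carries a standard contact dividing set and $\Sigma^+$ is identified with $F$.

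Next I would build an oriented path $P$ in $CC(\Sigma,d)$ realizing each Dehn twist $\tau_{V_i}$ as a \emph{single} oriented contact type $1$ move along a curve sitting in $\Sigma^+=F$ --- this is option (1) of Lemma~\ref{l:LFMDdiagrams}, read as a sequence of moves rather than as a static diagram. Start from the contact cut system $C^0=\mathcal{A}_0\cup\overline{\mathcal{A}_0}$ for some arc system $\mathcal{A}_0$ of $F$. Inductively, given the current contact cut system whose $\Sigma^+$–part is an arc system $\mathcal{A}$, first run a sequence of contact handleslides (contact type $0$ moves) to pass to an arc system $\mathcal{A}'$ of $F$ for which $V_i$ is a \emph{dual curve} --- i.e. $V_i$ meets exactly one arc of $\mathcal{A}'$ transversely once and is disjoint from the rest --- and then perform the right‑handed twist $\tau_{V_i}$, which is now an honest oriented contact type $1$ move since $V_i\subset\Sigma^+\setminus d$. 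Iterate over $i=1,\dots,n$. Two inputs make this legitimate: Lemma~\ref{lem:arcslides}, which guarantees that any two arc systems of $F$ (in particular the current one and one exhibiting $V_i$ as a dual curve) are connected by arc slides, hence by contact type $0$ moves (arc slides along subarcs of $\partial F=d$ lift to contact handleslides); and the elementary surface fact that every homologically essential simple closed curve on a compact surface with boundary is a dual curve for some arc system (a homologically essential curve is nonseparating or separates off pieces of $\partial F$ on both sides, and in either case a transverse arc extends to a complete arc system meeting the curve only in that one point). I would also note that these interpolating contact type $0$ moves do not change the $\Sigma^-$–side's status as an arc system, so all intermediate vertices are genuine contact cut systems.

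Finally I would assemble the pieces. The path $P$ lies in $CC(\Sigma,d)$, and collapsing its maximal contact type $0$ subpaths leaves precisely the $n$ contact type $1$ moves along $V_1,\dots,V_n\subset\Sigma^+$. By Theorem~\ref{thm:PtoLF} (and the discussion following Procedure~\ref{mainprocedure}), $LF(P)$ is then an allowable Lefschetz fibration with fiber $\Sigma^+\cong F$ whose ordered tuple of vanishing cycles is $V_1,\dots,V_n$; hence $LF(P)$ is isomorphic to $\pi$ as a Lefschetz fibration. Since $W(P)$ is by definition the Weinstein domain supported by $LF(P)$ via the uniqueness statement of Theorem~\ref{thm:LFWeinsteincorrespondence}, and $\pi$ supports $(W,\omega,V)$, uniqueness up to Weinstein homotopy gives $W(P)=(W,\omega,V)$ (up to Weinstein homotopy).

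The main obstacle will be the middle step: one must realize each $\tau_{V_i}$ by a \emph{single} contact type $1$ edge rather than a product of smaller twists --- a product would introduce extra critical points and change the total space --- and this is exactly what forces the interleaving with contact type $0$ moves together with the dual‑curve positioning. One must also verify that these interpolating contact type $0$ moves are harmless: they stay inside $CC(\Sigma,d)$ and are collapsed when $LF(P)$ is extracted, so they affect neither the validity of the path nor the resulting Lefschetz fibration. (If one preferred to allow left‑handed twists the same argument applies verbatim via Remark~\ref{rk:achiral}, but this is unnecessary here since the $V_i$ are genuine vanishing cycles of a symplectic Lefschetz fibration and yield right‑handed twists.)
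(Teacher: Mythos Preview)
Your proposal is correct and follows essentially the same route as the paper: invoke Theorem~\ref{thm:LFWeinsteincorrespondence} to obtain an allowable Lefschetz fibration, set $(\Sigma,d)=(F\cup_\partial\overline{F},\partial F)$, and then realize each vanishing cycle $V_i\subset\Sigma^+$ by a single oriented contact type~$1$ move after interpolating with contact type~$0$ moves (via Lemma~\ref{lem:arcslides}) to an arc system for which $V_i$ is dual. The paper packages this last step as the separate Theorem~\ref{thm:AlfsArePaths} and explicitly appeals to Lemma~\ref{lem:arcslidediffeo} to confirm that the handleslid cut systems are still related by $\tau_{V_i}$, a point you use implicitly when asserting that collapsing the type~$0$ subpaths leaves exactly the twists along $V_1,\dots,V_n$.
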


\begin{proof}
	By Theorem~\ref{thm:LFWeinsteincorrespondence}, every Weinstein domain is supported by some allowable Lefschetz fibration with fiber $F$. Let $(\Sigma,d)= (F\cup_{\partial F} \overline{F},\partial F)$.  By the more general Theorem~\ref{thm:AlfsArePaths} below, there is a path $P\in CC(\Sigma,d)$ such that $LF(P)$ is this Lefschetz fibration.
\end{proof}

\begin{theorem}
	\label{thm:AlfsArePaths}
	For any allowable achiral Lefschetz fibration over $D^2$ with fiber $F$ there exists a (generally non-unique) path $P$ in $CC(\Sigma_F, d)$ such that $LF(P)$ is the given achiral Lefschetz fibration. (Here $(\Sigma_F,d)= (F\cup_{\partial F} \overline{F},\partial F)$.)
\end{theorem}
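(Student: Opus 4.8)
The plan is to reverse-engineer a path in $CC(\Sigma_F,d)$ from the data of an allowable achiral Lefschetz fibration, essentially by ``running Lemma~\ref{l:LFMDdiagrams} backwards.'' An allowable achiral Lefschetz fibration over $D^2$ with fiber $F$ is determined up to isomorphism by an ordered tuple of vanishing cycles $V_1,\dots,V_n$ in $F$, each representing a nonzero class in $H_1(F)$, together with a sign $\epsilon_i\in\{+,-\}$ recording whether the $i$-th singularity is positive or negative. Fix any arc system $\mathcal{A}$ for $F$, and set $C^0 = \mathcal{A}\cup\overline{\mathcal{A}}$, a contact cut system for $(\Sigma_F = F\cup_{\partial F}\overline F,\, d=\partial F)$. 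Following the recipe in Lemma~\ref{l:LFMDdiagrams} (and Remark~\ref{rk:achiral} for the achiral case), define inductively $C^i = \tau_{T_i}^{\epsilon_i}(C^{i-1})$ where $T_i = (\tau_{V_{i-1}}^{\epsilon_{i-1}}\circ\cdots\circ\tau_{V_1}^{\epsilon_1})^{-1}(V_i)\subset\overline F$ (pushed to the $\overline F$ side). By the lemma, $(\Sigma_F,d,C^0,C^1,\dots,C^n)$ is a multisection with divides diagram compatible with the given achiral Lefschetz fibration.

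The remaining point is to exhibit these $C^i$ as the vertices of an honest path $P$ in $CC(\Sigma_F,d)$, i.e., to interpolate between consecutive $C^{i-1}$ and $C^i$ by contact type $0$ and contact type $1$ moves. Each $C^i$ differs from $C^{i-1}$ by a single Dehn twist $\tau_{T_i}^{\epsilon_i}$ about a curve $T_i$ disjoint from $d$, but this twist applied to the whole cut system $C^{i-1}$ need not move only one curve, so it is not literally a single contact type $1$ move. To fix this, before applying the twist I would first perform a sequence of contact type $0$ moves (contact handleslides) to replace $C^{i-1}$ by a contact cut system $C^{i-1}_{\mathrm{adapted}}$ that is ``adapted to $T_i$'': exactly one of its curves meets $T_i$ transversely once and all the others are disjoint from $T_i$. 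This is possible because $T_i$ is a nonseparating curve disjoint from $d$, hence is dual to some arc in an arc system for $\Sigma^+$ (equivalently $\overline F$), and Lemma~\ref{lem:arcslides} together with Lemma~\ref{lem:arcslidediffeo} let us realize the required change of arc system on both sides of $d$ simultaneously by contact handleslides. Then $\tau_{T_i}^{\epsilon_i}$ moves only that one distinguished curve, so it is a single (oriented if $\epsilon_i=+$) contact type $1$ move, producing a contact cut system that is related to $C^i$ by another sequence of contact type $0$ moves. Concatenating over $i=1,\dots,n$ yields the desired path $P$, and since each step was chosen to match the data of Lemma~\ref{l:LFMDdiagrams}, $\mathfrak{MD}(P)$ is a diagram for the achiral multisection with divides compatible with the given fibration, so $LF(P)$ is that fibration. (Non-uniqueness is built in: the choices of $\mathcal{A}$, of the adapting handleslides, and of how to order/collapse the type $0$ subpaths are all free.)

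The main obstacle I anticipate is the ``adapted'' step: carefully arguing that for a curve $T$ disjoint from $d$ one can always reach, by contact handleslides alone, a contact cut system meeting $T$ in the geometrically minimal way (once, on a single curve), while controlling that the handleslides genuinely respect the dividing set. Concretely, one wants to see $T$ as a ``dual curve in some arc system'' on the $\Sigma^+$ side — this is exactly the observation already invoked in the proof of Theorem~\ref{thm:PtoLF} (``$V_i$ is homologically essential because it is a dual curve in some arc system''), so the key input is: every nonseparating simple closed curve $T\subset\Sigma^+$ disjoint from $d$ is the dual curve to one arc in some arc system $\mathcal{A}^+$ for $\Sigma^+$; then double $\mathcal{A}^+$ across $d$ and invoke Lemmas~\ref{lem:arcslides} and~\ref{lem:arcslidediffeo} to move $C^{i-1}$ to this doubled system by contact handleslides. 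This is a standard surface-topology fact (complete an essential arc to an arc system, then its co-core curve is $T$), and once it is in hand the rest of the argument is bookkeeping matching the formulas in Lemma~\ref{l:LFMDdiagrams}. A secondary but purely cosmetic subtlety is tracking the orientation conventions between the $F$ and $\overline F$ descriptions of $T_i$ and the sign $\epsilon_i$, which is already handled by the displayed conjugation identity at the end of the proof of Lemma~\ref{l:LFMDdiagrams} and by Remark~\ref{rk:achiral}.
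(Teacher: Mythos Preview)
Your proposal is correct and follows essentially the same route as the paper's proof: obtain the compatible multisection-with-divides diagram from Lemma~\ref{l:LFMDdiagrams}, then for each consecutive pair $(C^{i-1},C^i)$ use arc slides (contact type~$0$ moves) to adapt the cut system so that $T_i$ meets it in a single point, apply the Dehn twist as a single contact type~$1$ move, and undo the slides (via Lemma~\ref{lem:arcslidediffeo}). The only cosmetic difference is that the paper elects to place all the $T_i$ on the $F$ side (option~(1) of Lemma~\ref{l:LFMDdiagrams}), so that $T_i=V_i$ directly, whereas you chose option~(2) on the $\overline F$ side and carry the conjugated expression $T_i=(\tau_{V_{i-1}}^{\epsilon_{i-1}}\cdots\tau_{V_1}^{\epsilon_1})^{-1}(V_i)$; either works, but the paper's choice avoids the extra bookkeeping you flag in your last paragraph.
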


\begin{proof}
	First we obtain a multisection diagram compatible with the achiral Lefschetz fibration from Lemma~\ref{l:LFMDdiagrams}. We have choices for this diagram, but since we are not claiming uniqueness, we can make any choice. For simplicity, let's choose the diagram where all the $T_i$ occur on the $F$ side. For each $i=0,\dots, n$, we have a cut system $C^i$ corresponding to a vertex in the contact cut graph $CC(\Sigma_F,d)$. We claim that for each $i=1,\dots, n$, $C^{i-1}$ is related to $C^i$ by a sequence of contact type $0$ moves and a single contact type $1$ move. To see this, recall that $C^i$ is obtained from $C^{i-1}$ by performing a Dehn twist about $T_i\subset F$. If $T_i$ intersects $C^{i-1}$ in a single point, then this is precisely a contact type $1$ move. In general though, $T_i$ may intersect $C^{i-1}$ in more than one point. However, since $T_i$ is essential in $F$, there exists an arc system $\mathcal{A}$ for $F$ which intersects $T_i$ transversally in a single point. The arc system $C^{i-1}\cap F$ is related to $\mathcal{A}$ by a sequence of arc slides. If we perform the contact type $0$ moves corresponding to these arc slides to $C^{i-1}$, we will obtain a new contact cut system $\widetilde{C}^{i-1}$ which defines the same contact handlebody as $C^{i-1}$, but intersects $T_i$ transversally in a single point. Note that since $C^{i-1}$ and $C^i$ are related by a diffeomorphism $\phi$ of $F$ supported away from the boundary, it makes sense to talk about performing the corresponding sequence of arc slides and contact type $0$ moves on $C^i$ to obtain $\widetilde{C}^i$. By Lemma~\ref{lem:arcslidediffeo}, $\widetilde{C}^{i-1}$ and $\widetilde{C}^i$ are related by the Dehn twist about $T_i$. Since $T_i$ intersects $\widetilde{C}^{i-1}$ in a single point and $T_i$ is disjoint from $d$, $\widetilde{C}^{i}$, there is a contact type $1$ move relating $\widetilde{C}^{i-1}$ with $\widetilde{C}^i$. Thus, we obtain a path of contact type $0$ moves from $C^{i-1}$ to $\widetilde{C}^{i-1}$, a contact type $1$ move to $\widetilde{C}^i$, and another sequence of contact type $0$ moves to $C^i$. Concatenating these paths for $i=1,\dots, n$ gives a path in $CC(\Sigma_F,d)$ supporting the multisection with divides compatible with the achiral Lefschetz fibration on $X$.
\end{proof}

\subsection{Folded Lefschetz fibrations} \label{ss:folded}

We recall a definition.

\begin{definition}
Let $X$ be a closed smooth 4-manifold, and $N$ be a regular neighborhood of a disjoint collection of embedded circles in $X$. Let $S^2$ be decomposed as hemispheres $D^\pm$ meeting along the equator $C$. A smooth map $f: X-N \to S^2$ is called a \textbf{folded Lefschetz fibration} if $f$ restricts to a (positive) Lefschetz fibration on $D^+$, a negative Lefschetz fibration on $D^-$ and an open book decomposition on $C$. 
\end{definition}

\begin{theorem}\label{loopssmooth4viaALF}
For every closed, oriented smooth 4-manifold $X$, there exists a regular neighborhood $N$ of disjointly embedded circles such that $X\setminus N$ can be expressed as $X_C(L)$ for some unoriented loop $L$ in $CC(\Sigma, d)$.
\end{theorem}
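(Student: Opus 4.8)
The plan is to reduce the statement to the existence of a folded Lefschetz fibration, invoke the achiral machinery of Theorem~\ref{thm:AlfsArePaths}, and then glue the two halves along their common open book. First I would recall that by work of Gay and Kirby (and also Baykur, Lekili), every closed oriented smooth $4$-manifold $X$ admits a folded Lefschetz fibration $f: X\setminus N\to S^2$ for a suitable regular neighborhood $N$ of a link of circles; moreover one can arrange both the positive piece $f^{-1}(D^+)$ and the negative piece $f^{-1}(D^-)$ to be \emph{allowable}, i.e.\ with homologically essential vanishing cycles, by stabilizing with extra cancelling handle pairs if necessary. Fix the common fiber $F$ over a point of the equator $C$, set $(\Sigma,d)=(F\cup_{\partial F}\overline F,\partial F)$, and order the vanishing cycles $V_1,\dots,V_k$ of the positive part and $V_{k+1},\dots,V_n$ of the negative part as one traverses the equator.

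Next I would apply Theorem~\ref{thm:AlfsArePaths} (or rather its proof) to the positive allowable Lefschetz fibration $f^{-1}(D^+)$ to produce a path $P^+$ in $CC(\Sigma,d)$ from a base vertex $v_0$ (given by a doubled arc system $\mathcal A\cup\overline{\mathcal A}$) to a vertex $v_k$, using only right-handed contact type $1$ moves and contact type $0$ moves, such that $LF(P^+)$ is $f^{-1}(D^+)$. Then I would run the same argument on the negative allowable Lefschetz fibration $f^{-1}(D^-)$, but traversing it in the reverse direction so that the construction starts at $v_k$; here the singularities are negatively oriented, so the path $P^-$ from $v_k$ to some vertex $v_n$ uses left-handed contact type $1$ moves (this is exactly the unoriented, achiral case of Lemma~\ref{l:LFMDdiagrams} and Remark~\ref{rk:achiral}). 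Concatenating, $P=P^+\ast P^-$ is an unoriented path in $CC(\Sigma,d)$ from $v_0$ to $v_n$ whose associated achiral Lefschetz fibration $LF(P)$ over $D^2$ is precisely $f$ restricted over, say, $D^+\cup D^-$ read as a single disk --- equivalently, the $n$-section $X(P)$ is $f^{-1}(S^2\setminus(\text{small disk}))$, a smooth $4$-manifold with boundary an open book on $\#_g S^1\times S^2$.

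The remaining point, and the one I expect to be the main obstacle, is to close the loop: I must arrange that the final vertex $v_n$ equals (or is contact--isotopic to) the initial vertex $v_0$, so that $P$ is genuinely a \emph{loop} $L$, and that capping $X(L)$ with $\sqcup_g S^1\times B^3$ recovers $f^{-1}(S^2)=X\setminus N$ rather than some other filling. The monodromy around the whole equator $C$ is the composition of all the twists (positive ones from $D^+$, negative ones from $D^-$), and for $f$ to extend over $C$ as an open book this total monodromy must be the identity on $F$ up to isotopy; this is the defining compatibility of a folded Lefschetz fibration. Translating this into the contact cut graph, the condition ``total monodromy $=\mathrm{id}$'' says exactly that the cut system $C^n=C^0$ as a vertex of $CC(\Sigma,d)$ after the contact type $0$ normalization, hence $v_n=v_0$ and $P=L$ is a loop. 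One then checks, via Definition~\ref{def:XCP} and the computation that $H_0=H_n$ is the page-gluing handlebody of the equatorial open book, that $X_C(L)$ is obtained from $f^{-1}(S^2)$ by the canonical cap, i.e.\ $X_C(L)=X\setminus N$. The delicate part is making the folded Lefschetz fibration \emph{and} its trivialized equatorial monodromy coexist with the allowability condition after stabilization --- one must stabilize symmetrically on both sides so as not to destroy the monodromy-identity condition, which I would handle by inserting cancelling pairs of a positive and a negative vanishing cycle about the same curve, one on each hemisphere, preserving both allowability and the total monodromy.
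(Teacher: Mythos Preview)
Your approach is essentially correct and close to the paper's, but organized around a different decomposition of $S^2$. The paper also starts from Baykur's folded Lefschetz fibration, but instead of splitting into hemispheres $D^+\cup D^-$ it slices $S^2$ by meridian arcs $l_\theta$ running from the north pole to the south pole (after homotoping all critical values onto the equator). Each handlebody is the preimage of a meridian and the core surface is $f^{-1}(n)\cup_d f^{-1}(s)$; since $l_0=l_{2\pi}$, the first and last handlebodies literally coincide and the loop closes by a sequence of contact type~$0$ moves. This sidesteps your splicing step: as written, two separate applications of Theorem~\ref{thm:AlfsArePaths} each begin at a \emph{doubled} arc system, so the terminal vertex of $P^+$ need not equal the initial vertex of $P^-$ without further argument. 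The cleanest fix, within your own framework, is the one you allude to parenthetically: delete a single small regular disk from $S^2$, regard all of $f$ as one achiral Lefschetz fibration over the remaining big disk, and invoke Theorem~\ref{thm:AlfsArePaths} once; your trivial-monodromy observation (the boundary of the big disk bounds a disk with no critical values) then gives $C^n=C^0$ on the nose.

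One point to correct: your proposed remedy for allowability does not work. Inserting a cancelling pair $\tau_c,\tau_c^{-1}$ on opposite hemispheres preserves the total monodromy but does nothing to the fiber, so a separating vanishing cycle remains separating and still fails the hypothesis of Theorem~\ref{thm:AlfsArePaths}. If some $V_i$ is null-homologous in $F$, the way out is to rewrite $\tau_{V_i}$ as a product of twists about non-separating curves (via lantern or chain relations), which changes the number of critical points but not the total space. The paper's own proof does not explicitly address allowability either; it is implicit in the appeal to Theorem~\ref{thm:AlfsArePaths}.
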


\begin{proof}

Let $X$ be an arbitrary closed smooth 4-manifold. By Proposition 6.6 of \cite{Baykur06}, $X-N$ admits a folded Lefschetz fibration $f: X-N \to S^2$. Let $n$ and $s$ be the north and south poles respectively and let $\Sigma^-$ be $f^{-1}(s)$ and $\Sigma^+$ be $f^{-1}(n)$. Take the (latitude, longitude) coordinates $(\theta, \psi)$ on $S^2$. For $0 \leq i \leq 2 \pi$, we let $l_i$ be the line $l_{i} = \{ (\theta, \psi) \in S^2 | \theta = i, 0 \leq \psi \leq \pi$ with endpoints $n$ and $s$. Let $S_{[a,b]}$ be the sector of $S^2$ given by $S_{[a,b]} = { l_i |a \leq i \leq b}$.  By a homotopy through Lefschetz fibrations, we may assume that the (positive and negative) Lefschetz singularities are contained on the equator of $S^2$ and that no two critical points take the same critical value. Let $\theta_1, \dots , \theta_n$ be the angles where $l_{\theta_i}$ passes through a critical point and let $\epsilon \in \mathbb{R}$ be sufficiently small so that $S_{[\theta_i - \epsilon, \theta_i+ \epsilon]}$ contains only one Lefschetz singularity.  

Note that for a regular line, $l_r$, not passing through a critical point,  $f^{-1} (l_r)$ is a handlebody. We denote by $H_i$ the handlebodies corresponding to $f^{-1}(l_{\theta_i + \epsilon})$. Let $\Sigma = \Sigma^+ \cup_d \Sigma^-$ where $d = \partial \Sigma^+= \partial \Sigma^-$. As in the construction of a multisection with divides compatible with a Lefschetz fibration, this decomposition gives an achiral multisection with divides where $H_n$ is identified with $H_0$. 

As in Theorem~\ref{thm:AlfsArePaths}, we can construct a loop in the contact cut graph $CC(\Sigma,d)$ representing a diagram for the achiral multisection with divides. 
In this case, note that since $H_0$ and $H_n$ are the same standard contact handlebody, there is a sequence of contact type $0$ moves closing the loop.


\end{proof}



%

\section{Stabilizations}
\label{s:Stab}

Given an achiral Lefschetz fibration $\mathcal{F}$ with bounded fibers $\pi: X^4 \to D^2$ we can obtain a new Lefschetz fibration of $X$ as follows: attach a $1$--handle so that its attaching spheres are attached to $\partial X$  along neighborhoods of points in the boundary of a regular fiber. Attaching the $1$--handle in such a way has the result of attaching a $2$--dimensional $1$--handle to each fiber of $\mathcal{F}$. Next attach a canceling $2$--handle so that the attaching sphere is attached along a curve in a regular fiber, which necessarily goes over the new $2$--dimensional $1$--handle exactly once, where the framing differs from the fiber framing by $\pm1$. The resulting $4$-manifold is diffeomorphic to $X$ and is supported by a new Lefschetz fibration $\mathcal{F}'$ whose fibers differ from those of $\mathcal{F}$ by an additional $2$--dimensional $1$--handle and an additional Lefschetz singularity. The newly added $2$--handle corresponds to the new Lefschetz critical point, the attaching sphere gives us the vanishing cycle, and the framing of the $2$--handle determines the chirality of the Lefschetz singularity.

\begin{definition}
	The process described above is called a Lefschetz stabilization of $\mathcal{F}$. If the $2$--handle is $-1$--framed, we say it is a \emph{positive Lefschetz stabilization}. If it is $+1$--framed, we call it a \emph{negative Lefschetz stabilization}.
\end{definition}

If we have a Lefschetz fibration over the disk, it supports a Weinstein domain (Theorem~\ref{thm:LFWeinsteincorrespondence}). If we perform a positive stabilization, the resulting Lefschetz fibration supports a Weinstein domain which is Weinstein homotopic to the first. The Weinstein homotopy is a cancellation of the new 1-handle and 2-handle.

Any achiral Lefschetz fibration $\mathcal{F}$ of $X$ induces an open book decomposition $\partial \mathcal{F}$ of $\partial X$ such that the monodromy is given by a composition of Dehn twists of the regular fiber of $\mathcal{F}$ along the vanishing cycles. We obtain a positive (resp. negative) Dehn twist if the vanishing cycle corresponds to a negatively (resp. positively) framed $2$--handle. Thus, a positive/negative Lefschetz stabilization gives rise to a positive/negative \emph{Hopf stabilization} of the induced open book. We can view Hopf stabilization abstractly as attaching a handle to the page of the open book and performing an additional Dehn twist along any simple closed curve which goes over the new handle exactly once.

\begin{remark}
	It is important to note that a Hopf stabilization of a given open book can be done in many ways. The first choice is how the topology of the page changes; the new handle can be attached along the same boundary component or different boundary components. Choosing the same boundary component increases the genus of the page by one, while decreasing the number of boundary components by one. On the other hand, choosing different boundary components preserves the genus of the page and increases the number of boundary components by one. The second choice is the isotopy class of the curve along which the Dehn twist is performed.
\end{remark}

A positive stabilization of a Lefschetz fibration can be doubled as in~\cite[Definition 5.2]{IslambouliStarkston}, to give a stabilization of a multisection with divides.

Given a path $P \subset CC(\Sigma, d)$ corresponding to a Lefschetz fibration, we can obtain a new path $P'\subset CC(\Sigma',d')$ by performing a Lefschetz stabilization:



\begin{proposition}\label{prop:stab}
	Let $\pi: X \to D^2$ be a Lefschetz fibration with fiber surface $\Sigma^+$ and let $\Sigma$ be the double of $\Sigma^+$. Suppose $\pi': X \to D^2$ is a Lefschetz fibration obtained by stabilizing $\pi$ along a properly embedded arc $a \subset \Sigma^+$, and let $(\Sigma', d')$ be the surface with divides obtained by doubling the fiber surface of $\pi'$. If $P = (v_1, v_2, ... v_n)$ is a path of length $n$ in $CC(\Sigma, d)$ corresponding to $\pi$, then $\pi'$ corresponds to a path $P'$ of length $n+m+1$ in $CC(\Sigma', d')$ where $m = |a \cap v_n|$.
\end{proposition}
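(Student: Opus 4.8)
The plan is to build $P'$ in two stages: first reinterpret each vertex of $P$ as a contact cut system for $(\Sigma',d')$ without changing the type of any edge, obtaining a path $P_0$ of length $n$; then append a tail of $m$ contact type $0$ moves followed by a single contact type $1$ move installing the new Lefschetz singularity. For the first stage, write the fiber of $\pi'$ as $F' = \Sigma^+\cup h$, where $h$ is the $2$--dimensional $1$--handle of the stabilization attached to $\partial\Sigma^+$ along a neighborhood of $\partial a$; let $e$ be its co-core arc and $c = a\cup(\text{core of }h)\subset F'$ the new vanishing cycle, a simple closed curve disjoint from $d'$ that meets $e$ transversally in one point (so $c$ is homologically essential). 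Then $\Sigma' = F'\cup_{d'}\overline{F'}$ is obtained from $\Sigma = \Sigma^+\cup_d\overline{\Sigma^+}$ by attaching the doubled handle $h\cup\overline h$; write $E = e\cup\overline e$ for the small circle doubling $e$. After an isotopy of $a$ making $\partial a$ disjoint from the finitely many arc-slide paths and twisting curves used in the diagram $\mathfrak{MD}(P)$ -- which changes neither $a$ up to isotopy nor $\pi'$ up to isomorphism -- the circle $E$ is disjoint from all of that data. For a vertex $v_i$ of $P$ with cut system $C^i$ and arc system $\mathcal A^i = C^i\cap\Sigma^+$, the family $\mathcal A^i\cup\{e\}$ cuts $F'$ into a disk, so $C^i\cup\{E\}$ is a contact cut system for $(\Sigma',d')$; let $v_i'$ denote the corresponding vertex. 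Because $E$ is disjoint from everything occurring in the edges of $P$, consecutive $v_i'$ and $v_{i+1}'$ are related by a contact move of the same type as $v_i, v_{i+1}$, so we get a path $P_0 = (v_1',\dots,v_n')$ in $CC(\Sigma',d')$ of length $n$ whose type $1$ edges have the same twisting curves as those of $P$. By Lemma~\ref{l:LFMDdiagrams}, $LF(P_0)$ has fiber $F'$ and the vanishing cycles of $\pi$ (included via $\Sigma^+\hookrightarrow F'$) -- it is $\pi$ with a trivial $1$--handle attached to every fiber.

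For the second stage, note that $C^n\cup\{E\}$ meets $c$ in $m+1$ points: the intersection $c\cap e$, which persists under any handleslide fixing $d'$, and $m$ further points coming from $a\cap\mathcal A^n = a\cap v_n$. Sliding, one at a time, the arc of $\mathcal A^n$ carrying such a surplus point over $e$ (with the slide ribbon chosen so that the resulting intersection number with $c$ drops) removes that point, so after exactly $m$ arc slides we reach an arc system $\widetilde{\mathcal A}$ with $\widetilde{\mathcal A}\cap c$ a single point; doubling these gives $m$ contact type $0$ moves $v_n', v_{n+1}',\dots, v_{n+m}'$ with $v_{n+m}'$ represented by $\widetilde C := \widetilde{\mathcal A}\cup\overline{\widetilde{\mathcal A}}$. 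Since $c\subset F'$ is disjoint from $d'$ and meets $\widetilde C$ in one point, the right-handed Dehn twist about $c$ is an oriented contact type $1$ move; set $v_{n+m+1}' := \tau_c(\widetilde C)$ and $P' := (v_1',\dots,v_{n+m+1}')$, a path of length $n+m+1$. Its type $1$ edges are those of $P_0$ together with the twist about $c$ on the $F'$ side, so Lemma~\ref{l:LFMDdiagrams} identifies $LF(P')$ with $LF(P_0)$ with one more positive Lefschetz singularity with vanishing cycle $c$, i.e.\ with the positive Lefschetz stabilization $\pi'$ of $\pi$ along $a$. (A negative stabilization is handled the same way, using a left-handed twist for the last move and the achiral picture of Remark~\ref{rk:achiral}.)

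The crux is the claim in the second paragraph that \emph{exactly} $m$ arc slides are needed. Morally, the co-core arc $e$ is automatically correctly positioned -- it meets $c$ once and that crossing cannot be destroyed -- while each of the $m$ crossings of $v_n$ with $a$ must be cleared and each is cleared by a single slide over $e$; the work is in making this ``peel off one surplus crossing per slide'' argument precise and uniform over all positions of $a$ relative to $v_n$, in verifying that each slide can be taken to be an honest arc slide (a boundary-supported handleslide) and hence a contact type $0$ move for $d'$, and in confirming that the intersection count really drops all the way to $1$ rather than stabilizing at some larger value. Granting this, the rest is bookkeeping together with Lemma~\ref{l:LFMDdiagrams} and Theorem~\ref{thm:PtoLF}.
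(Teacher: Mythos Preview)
Your proposal is correct and follows essentially the same approach as the paper: embed $P$ into $CC(\Sigma',d')$ by adjoining the (doubled) co-core $e$ of the stabilizing handle to each cut system, then slide the $m$ arcs of $\mathcal{A}^n$ meeting $a$ over $e$ one at a time to reduce $|c\cap C^{n}|$ from $m+1$ to $1$, and finish with the contact type $1$ move along $c$. The paper is slightly more explicit about the slide recipe---it orders the surplus intersections $p_1,\dots,p_m$ along $c$ and removes each by following $c$ backwards from $p_i$ to the co-core---while you are more careful about invoking Lemma~\ref{l:LFMDdiagrams} to identify $LF(P')$ with $\pi'$; but these are cosmetic differences.
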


\begin{proof}
	Let $\phi$ be the global monodromy of the Lefschetz fibration $\pi$ corresponding to the path $P= (v_1, v_2, ... v_n)$. The Lefschetz stabilization along $a \subset \Sigma^+$ gives rise to a Lefschetz fibration $\pi'$ with monodromy $\tau_c\circ \phi$, where $\psi$ is a Dehn twist along the curve $c$ which is a union of the arc $a$ with the core of the new $2$--dimensional $1$--handle which gives rise to the fiber surface of $\pi'$.
	
	First we note that by including $\Sigma^+$ into the new fiber surface, for each $i = 1, \ldots, n$ we obtain a vertex in $CC(\Sigma',d')$ which corresponds to $v_i$. This is equivalent to obtaining a cut system for the stabilized surface from a cut system of $\Sigma^+$ by using the co-core of the newly added $2$--dimensional $1$--handle. Note that $c$ intersects the additional cutting arc exactly once by construction. Additionally, an arc slide or Dehn twist performed in $\Sigma^+$ can also be realized in the new fiber surface. Thus, an edge of type $0$ or type $1$ between $v_i$ and $v_{i+1}$ gives rise to an edge of the same type in $CC(\Sigma',d')$ between corresponding vertices. 
	
	Denote the cut system of arcs which corresponds to $v_n'$ by $\mathcal{A}'_n$. Let $p_0$ be the intersection point between $c$ and the co-core of the stabilizing $1$--handle. An orientation of $c$ give rise to an ordering of the points of intersection of $c$ with $\mathcal{A}'_n$, denoted $(p_0, p_1, \ldots, p_m)$. Perform a type 0 move on the arc whose intersection with $c$ is $p_1$ by following $c$ in the reverse orientation and arc-sliding over the co-core of the new handle. This removes the intersection point $p_1$ and takes us to a new vertex $v'_{n+1}$. We can remove each subsequent intersection point by performing the same type $0$ move on the arc corresponding to each $p_i$ which gives rise to an additional vertex $v'_{n+i}$. Once $c\cap \mathcal{A}_n' = p_0$, we can realize $\tau_c^{\pm}$ by a type $1$ move. We have constructed a path $P' = (v'_1, \ldots, v'_n, v'_{n+1}, \ldots, v'_{n+m}, v'_{n+m+1})$ corresponding to $\pi'$, where the final vertex is the result of the type $1$ move (i.e. $\tau_c$) on $v'_{n+m}.$
\end{proof}

\begin{corollary}
	Let $(W,\omega,V)$ be a Weinstein domain. Then we have \[ \cL_g(W,\omega,V) \ge \cL_{g+1}(W,\omega,V).\]
\end{corollary}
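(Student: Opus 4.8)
The plan is to reduce the statement to Proposition~\ref{prop:stab}. Recall that $\cL_g(W,\omega,V)$ is the minimum of $\cL_{(\Sigma,d)}(\mathfrak{M})$ over all multisections with divides $\mathfrak{M}$ supporting $(W,\omega,V)$ with $g(\Sigma)=g$, and that $\cL_{(\Sigma,d)}(\mathfrak{M})$ counts the minimal number of type $0$ moves in a path $P$ in $CC(\Sigma,d)$ with $\mathfrak{M}(P)=\mathfrak{M}$. First I would pick a multisection with divides $\mathfrak{M}$ of genus $g$ realizing $\cL_g(W,\omega,V)$, together with a path $P=(v_1,\dots,v_n)$ in $CC(\Sigma,d)$ achieving the minimum $N_0(P)=\cL_{(\Sigma,d)}(\mathfrak{M})=\cL_g(W,\omega,V)$. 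By Theorem~\ref{thm:PtoLF} (or directly Procedure~\ref{mainprocedure}) this path $P$ gives an allowable Lefschetz fibration $\pi:=LF(P)$ on $W$ with fiber $\Sigma^+$, where $g(\Sigma^+)$ is determined by $g$ and the number of boundary components of $d$.

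Next I would apply a positive Lefschetz stabilization to $\pi$ along a properly embedded arc $a\subset\Sigma^+$ chosen so that the new handle is attached to the \emph{same} boundary component of $\Sigma^+$: this increases the genus of the fiber by one (decreasing its number of boundary components by one only if we were careless, so more precisely I should choose $a$ with both endpoints on the same component and joining them around a handle; I want the double $\Sigma'$ to have genus exactly $g+1$). The key point is that a positive Lefschetz stabilization does not change the supported Weinstein domain up to Weinstein homotopy (discussed right after the definition of Lefschetz stabilization — the new $1$-handle and $2$-handle cancel), so the stabilized Lefschetz fibration $\pi'$ still supports $(W,\omega,V)$. By Proposition~\ref{prop:stab}, $\pi'$ corresponds to a path $P'$ in $CC(\Sigma',d')$ of length $n+m+1$ where $m=|a\cap v_n|$; more importantly for us, the $m+1$ new moves inserted by the construction consist of $m$ type $0$ moves and a single type $1$ move, so $N_0(P')=N_0(P)+m$. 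To get the cleanest bound I would additionally choose the arc $a$ so that it meets the cut system $v_n$ minimally; since $v_n\cap\Sigma^+$ is an arc system cutting $\Sigma^+$ into a disk, we can even isotope things so that $m=0$, i.e.\ $a$ is disjoint from $v_n$ except at its endpoints on $\partial\Sigma^+$ — indeed one can first apply contact type $0$ moves to arrange $v_n$ to be disjoint from a chosen $a$. Then $N_0(P')=N_0(P)$.

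Finally, $\mathfrak{M}':=\mathfrak{M}(P')$ is a multisection with divides supporting $(W,\omega,V)$ with core surface of genus $g+1$, so by definition
\[
\cL_{g+1}(W,\omega,V)\le \cL_{(\Sigma',d')}(\mathfrak{M}')\le N_0(P')= N_0(P)=\cL_g(W,\omega,V),
\]
which is the desired inequality.

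\textbf{Main obstacle.} The delicate point is the genus bookkeeping for the doubled surface and arranging $m=0$: I need to make sure that stabilizing along an arc attached to a single boundary component really raises the genus of the \emph{double} $\Sigma'$ by exactly one (not the number of boundary components), and that one is genuinely free to apply contact type $0$ moves to $v_n$ first so that the stabilizing arc $a$ is disjoint from $v_n$ in the interior — this uses Lemma~\ref{lem:arcslides} to move $v_n\cap\Sigma^+$ to any desired arc system. If one does not insist on $m=0$, the argument still works verbatim provided Proposition~\ref{prop:stab}'s construction only adds type $0$ moves plus one type $1$ move (which it does), since then $N_0(P')=N_0(P)+m\ge N_0(P)$ is the wrong direction — so arranging $m=0$, or at least bounding the added type $0$ moves by something that does not spoil the inequality, is essential and is where the real care is needed.
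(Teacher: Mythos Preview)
Your argument is correct and is essentially the paper's proof: choose a path $P$ realizing $\cL_g$, pick a stabilizing arc $a\subset\Sigma^+$ disjoint from $v_n$ so that $m=0$ in Proposition~\ref{prop:stab}, and observe that the single new edge added is a type~$1$ move, giving $N_0(P')=N_0(P)$ on a core surface of genus $g+1$.

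Two remarks on your ``Main obstacle,'' where you are overthinking slightly. First, the genus bookkeeping is automatic: since $g(\Sigma)=2p+b-1=1-\chi(F)$ for the double of a page $F$ of genus $p$ with $b$ boundary components, and attaching any $1$-handle to $F$ drops $\chi(F)$ by one, \emph{every} Lefschetz stabilization raises $g(\Sigma)$ by exactly one, regardless of whether the handle connects one or two boundary components. Second, to get $m=0$ you should simply \emph{choose} $a$ inside the disk $\Sigma^+\setminus(v_n\cap\Sigma^+)$ (which exists because $v_n\cap\Sigma^+$ is an arc system); do \emph{not} apply contact type~$0$ moves to $v_n$ first, since that would increase $N_0(P)$ and ruin the inequality. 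The paper does exactly this: ``since $v_n$ is a complete arc system when restricted to $\Sigma^+$, we can always choose a stabilization arc $a\subset\Sigma^+$ disjoint from $v_n$.''
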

\begin{proof}
	Let $P=(v_1,\dots,v_n)$ be a path in $CC(\Sigma,d)$ for some $\Sigma$ of genus $g$, realizing $\cL_g(W,\omega,V)$ and let $\pi:W\to D^2$ be a Lefschetz fibration corresponding to $P$. Since $v_n$ is a complete arc system when restricted to $\Sigma^+$, we can always choose a stabilization arc $a\subset \Sigma^+$ disjoint from $v_n$. 
	Stabilize $\pi$ along arc $a$ to get $\pi'$ and the corresponding path $P'\in CC(\Sigma',d')$, which has length $n$ by Proposition \ref{prop:stab}. As $g(\Sigma')=g+1$, we have
	$$\cL_{g+1}(W,\omega,V)\le \textrm{ length of $P'$} =\cL_g(W,\omega,V).$$ 
\end{proof}

\begin{corollary}
	Let $P\subset C(\Sigma, d)$ be a path which defines an open book decomposition $\ob$. Given any stabilization $\ob'$ of $\ob$, there is a path $P' \subset C(\Sigma', d')$ whose endpoints correspond  to $\ob'$.
\end{corollary}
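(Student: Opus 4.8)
The plan is to route through the Lefschetz‑fibration picture of Section~\ref{s:Stab} and then invoke the stabilization Proposition~\ref{prop:stab}. First I would recall that, by Theorem~\ref{thm:PtoLF}, the path $P$ determines an (achiral) allowable Lefschetz fibration $LF(P)\colon X\to D^2$ with fiber $\Sigma^+$, and that the open book $\ob$ which $P$ defines is the induced open book $\partial LF(P)$ on $\partial X$: its page is $\Sigma^+$ and its monodromy is the global monodromy of $LF(P)$, the ordered product of the (right‑ and left‑handed) Dehn twists about the vanishing cycles. Under this identification the first and last vertices of $P$ are the contact cut systems of the first and last contact handlebodies $H_0,H_n$ of the compatible multisection with divides, and $H_0\cup_\Sigma H_n$ is the contact Heegaard splitting carried by $\ob$. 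So the assertion ``the endpoints of $P'$ correspond to $\ob'$'' will mean that the Lefschetz fibration $LF(P')$ produced by $P'$ has $\partial LF(P')=\ob'$.

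Second, I would realize the given stabilization $\ob'$ as a Lefschetz stabilization of $LF(P)$. A Hopf stabilization attaches a $2$--dimensional $1$--handle $h$ to the page $\Sigma^+$ and composes the monodromy with a Dehn twist about a curve $\gamma\subset\Sigma^+\cup h$ meeting the cocore of $h$ exactly once. Set $a:=\gamma\cap\Sigma^+$, a properly embedded arc of $\Sigma^+$ whose endpoints are the feet of $h$. The handle $h$ is determined up to isotopy by those feet (orientability pins down the remaining choice), and $\gamma$ is isotopic to $a$ union the core of $h$. Hence $\ob'$ is precisely the open book induced by the Lefschetz stabilization of $LF(P)$ along the arc $a$, with $-1$--framed $2$--handle in the positive case and $+1$--framed in the negative case; call the resulting Lefschetz fibration $\pi'\colon X'\to D^2$, so $\partial\pi'=\ob'$, and let $(\Sigma',d')$ be the double of the fiber of $\pi'$.

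Third, I would apply Proposition~\ref{prop:stab} to the arc $a\subset\Sigma^+$ and the path $P=(v_1,\dots,v_n)$: this yields a path $P'\subset CC(\Sigma',d')$ of length $n+m+1$, where $m=|a\cap v_n|$, whose associated Lefschetz fibration is $\pi'$. When a negative stabilization is involved one uses the achiral versions (Remark~\ref{rk:achiral} and the achiral form of Lemma~\ref{l:LFMDdiagrams}); Proposition~\ref{prop:stab} still applies, the only change being that the concluding type~$1$ move realizes $\tau_c^{-1}$ rather than $\tau_c$. Since $LF(P')=\pi'$ and $\partial\pi'=\ob'$, the first and last vertices of $P'$ are the contact cut systems of the first and last handlebodies of the multisection compatible with $\pi'$, i.e.\ they give the contact Heegaard splitting carried by $\ob'$, as desired.

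The main obstacle I anticipate is the bookkeeping in the second step: checking that \emph{every} flavor of Hopf stabilization of $\ob$ — both ways the page can change (attaching $h$ along the same boundary component of $\Sigma^+$ or along two different ones) and the free choice of the twisting curve $\gamma$ — arises from a Lefschetz stabilization of $LF(P)$ along a suitable properly embedded arc $a\subset\Sigma^+$ with the appropriate $\mp1$--framing, and that the chirality of that Lefschetz stabilization matches the sign of the Hopf stabilization. Once this dictionary is in place, the remainder is a direct citation of Theorem~\ref{thm:PtoLF} and Proposition~\ref{prop:stab}.
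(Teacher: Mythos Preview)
Your proposal is correct and matches the paper's approach: the paper states this corollary with no proof at all, treating it as an immediate consequence of Proposition~\ref{prop:stab} together with the Hopf/Lefschetz stabilization dictionary spelled out in the paragraphs preceding it. Your argument is precisely the unwinding of that implicit proof---realize the given Hopf stabilization of $\ob=\partial LF(P)$ as a Lefschetz stabilization of $LF(P)$ along an arc $a\subset\Sigma^+$, then cite Proposition~\ref{prop:stab} (and its achiral variant for negative stabilizations, via the $\tau_c^{\pm}$ in its proof)---so there is nothing to add.
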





\section{Weinstein domains with Weinstein $\mathcal{L}$-invariant equal to zero}
\label{s:L0}
Thinking of $\cL$ as a complexity measure, the ``simplest'' multisections with divides are those with $\cL=0$. In this section, we will study the multisections with divides with $\cL_{(\Sigma,d)}(\mathfrak{M})=0$, and the Weinstein domains they support. This amounts to interpreting paths in the contact cut graph $CC(\Sigma,d)$ whose edges are all type $1$.

First, we recall some basic properties of arc systems for a surface with boundary, $F$. (We will apply this in the case $F=\Sigma^+$.) By definition, an arc system $\mathcal{A}=\{a_1,\dots, a_n\}$ for $F$ cuts $F$ into a disk. We identify this disk with a polygonal presentation where each arc appears twice in the boundary of the polygon (and gluing these pairs together gives back $F$). For each arc $a_i\in \mathcal{A}$, let $B_i$ denote the simple closed curve in $F$ whose image in the polygon is a straight line segment connecting the midpoints of the two instances of $a_i$. Observe that $B_i$ intersects $a_i$ in $F$ at a single point, and $B_i$ is disjoint from $a_j\in \mathcal{A}$ for $j\neq i$. The collection of curves $\mathcal{B}=\{B_1,\dots, B_n\}$ is called the set of ``dual curves'' to $\mathcal{A}$. Note that this set is uniquely characterized up to isotopy by the property that the geometric intersection number of $B_i$ with $a_j$ is $1$ if $i=j$ and $0$ otherwise. From the polygonal presentation we can observe that for $i\neq j$, $B_i$ and $B_j$ either intersect once or are disjoint (since they are straight line segments in the polygon). Note, different choices of arc systems can yield different intersection patterns among the curves in $\mathcal{B}$.

The following lemma will help us translate a path in $CC(\Sigma,d)$ with no type $0$ moves into a Lefschetz fibration.

\begin{lemma}\label{l:L0LF}
	Let $F$ be a surface with genus $g$ and $b$ boundary components, and let $m=2g+b-1$. Let $P$ be a path in $CC(\Sigma,d)$ (where $\Sigma = F\cup_{\partial} \overline{F}$, and $d=\partial F$) built from exactly $n$ edges of contact type $1$ and none of contact type $0$. 
	
	Then there exists a collection of $m$ simple closed curves $\mathcal{B}=\{B_1,\dots, B_m\}$ in $F$, and a subset $J\subseteq \{1,\dots, n\}$ such that $W(P)$ is supported by a Lefschetz fibration with fiber $F$ and vanishing cycles $V_1,\dots, V_n$ where
		$$V_i \in  \left(\prod_{j\in J, j<i} \tau_{V_j}  \right)(\mathcal{B})$$
	where the order of the product is smallest $j$ to largest $j$ going right to left using composition of functions as the product operation.
	(In particular if $J=\{1,\dots, n\}$, $V_i \in (\tau_{V_{i-1}}\circ \cdots \tau_{V_1})(\mathcal{B})$.)
\end{lemma}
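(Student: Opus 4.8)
The plan is to induct on the number of contact type $1$ edges in the path $P$, translating each type $1$ move into a vanishing cycle using the dictionary established in Lemma~\ref{l:LFMDdiagrams} and Theorem~\ref{thm:PtoLF}. Since $P$ has no type $0$ moves, the sequence of contact handlebodies $H_0, H_1, \dots, H_n$ coming from the vertices $v_0, \dots, v_n$ changes at every step, and each step is a Dehn twist about some curve $T_i$ disjoint from $d$, lying either in $\Sigma^+ \cong F$ or in $\Sigma^- \cong \overline F$. The set $J$ will be precisely the indices $i$ for which $T_i$ lies in $\Sigma^+$ (equivalently, the positive/right-handed side, or whichever side we designate as $F$), and the curves in $\mathcal B$ will be the dual curves to a fixed arc system $\mathcal A$ for $F$, recalled in the paragraph preceding the lemma.

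\textbf{Key steps.} First I would fix an arc system $\mathcal{A}=\{a_1,\dots,a_m\}$ for $F$ and let $\mathcal B = \{B_1,\dots, B_m\}$ be its dual curves; the key property is that each $B_i$ meets $a_i$ once and is disjoint from the other $a_j$. Since $P$ has no type $0$ moves, after possibly reindexing we may assume $v_0$ restricts to $\mathcal{A}$ on the $\Sigma^+$ side (if not, a single initial arc-slide adjustment handled by Proposition~\ref{prop:type0} shows we can arrange this without affecting the count, or we simply absorb the discrepancy into the choice of $\mathcal{A}$). Then each type $1$ edge $v_{i-1}\to v_i$ is a Dehn twist about $T_i$ disjoint from $d$. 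For the move to be a \emph{contact} type $1$ move starting from $v_{i-1}$, the curve $T_i$ must intersect the relevant arc system (the restriction of $v_{i-1}$ to $\Sigma^+$ or $\Sigma^-$) exactly once — this forces $T_i$ to be a dual curve to that arc system. Now I would apply the recipe in Theorem~\ref{thm:PtoLF}: if $T_i \subset \Sigma^+$, set $V_i = T_i$, and we record $i \in J$; if $T_i \subset \Sigma^-$, set $V_i = \tau_{V_{i-1}}^{\pm}\circ\cdots\circ\tau_{V_1}^{\pm}(T_i)$, and $i\notin J$. The inductive claim to carry is that the arc system underlying $v_i$ on the $\Sigma^+$ side equals $\bigl(\prod_{j\in J,\, j\le i}\tau_{V_j}\bigr)(\mathcal A)$, so that $T_{i+1}$ (being a dual curve to this arc system when $T_{i+1}\subset\Sigma^+$) lies in $\bigl(\prod_{j\in J, j\le i}\tau_{V_j}\bigr)(\mathcal B)$, matching the desired formula for $V_{i+1}$; a parallel bookkeeping handles the $\Sigma^-$ side. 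The base case $i=0$ is the choice $v_0|_{\Sigma^+}=\mathcal A$. Finally, Theorem~\ref{thm:PtoLF} (applied to this $P$, possibly unoriented if some twists are left-handed) produces $LF(P)$ with these vanishing cycles and fiber $\Sigma^+ = F$, and Theorem~\ref{thm:LFWeinsteincorrespondence} identifies the supported Weinstein domain with $W(P)$.

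\textbf{Main obstacle.} The technical heart is verifying that the arc system on each side evolves exactly by the product of Dehn twists over the indices in $J$ seen so far, and correspondingly that the dual curves (which govern the only legal type $1$ moves out of $v_i$) are the push-forwards of the original $\mathcal B$ under that same product. This is essentially a careful unwinding of the monodromy formula~\eqref{eqn:monodromy} and the conjugation identity already proven in Lemma~\ref{l:LFMDdiagrams}, together with the observation that a contact type $1$ move out of a vertex is forced to be a twist about a dual curve of the ambient arc system. I also need to be slightly careful about the two-sided nature of the problem: a type $1$ move performed on the $\Sigma^-$ side only changes the $\Sigma^-$ arc system, so the $\Sigma^+$ arc system is updated precisely by the twists indexed by $J$, which is what keeps the product in the statement ranging only over $j\in J$ with $j<i$. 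Once this inductive statement is in place, the conclusion is immediate; the $\pm$ ambiguity in individual twists is harmless since the statement only asserts membership $V_i\in\bigl(\prod_{j\in J,\, j<i}\tau_{V_j}\bigr)(\mathcal B)$ with right-handed twists, which is the convention fixed by reading $P$ as an oriented path (or by Remark~\ref{rk:achiral} in the achiral case).
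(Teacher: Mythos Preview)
Your approach is the paper's: normalize so $v_0$ is the double of an arc system $\mathcal{A}$ with dual curves $\mathcal{B}$, set $J=\{i:T_i\subset\Sigma^+\}$, read off $V_i$ via the recipe in Lemma~\ref{l:LFMDdiagrams}/Theorem~\ref{thm:PtoLF}, and track the evolving arc systems inductively.

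Two points worth sharpening. First, to make $v_0$ the \emph{double} of $\mathcal{A}$ you should choose the identification $\overline{F}\cong F$ (equivalently, apply a diffeomorphism of $\overline{F}$ fixing $\partial\overline{F}$); invoking Proposition~\ref{prop:type0} would insert type~$0$ edges, contrary to hypothesis. Second, the $\Sigma^-$ case is not genuinely ``parallel bookkeeping.'' When $i\notin J$ you need $V_i=\tau_{V_{i-1}}\circ\cdots\circ\tau_{V_1}(T_i)\in\bigl(\prod_{j\in J,\,j<i}\tau_{V_j}\bigr)(\mathcal{B})$, and this requires proving the auxiliary identity
\[
\tau_{V_{i-1}}\circ\cdots\circ\tau_{V_1}\bigl(\mathcal{B}'_{i-1}\bigr)\;=\;\Bigl(\prod_{j\in J,\,j<i}\tau_{V_j}\Bigr)(\mathcal{B}),
\]
where $\mathcal{B}'_{i-1}=\tau_{T_{i-1}}\circ\cdots\circ\tau_{T_1}(\mathcal{B}')$. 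The paper proves this by a separate induction on $i$, itself splitting on whether $i-1\in J$; the $i-1\notin J$ sub-case is exactly where the conjugation formula you mention enters, cancelling $\tau_{V_{i-1}}$ against a $\tau_{V_{i-1}}^{-1}$ hidden in $\tau_{T_{i-1}}^{-1}$. You have the right ingredients, but this is the step that carries the actual content.
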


\begin{proof}
	Let $(v_0,v_1,\dots, v_n)$ denote the path of vertices in $CC(\Sigma,d)$ of contact type $1$ edges in the path $P$. Denote the curves in the cut system corresponding to $v_i$ by $C_1^i,\dots, C_m^i$.
	
	Via a diffeomorphism of $\overline{F}$ fixing the boundary, we can assume that $C_1^0,\dots, C_m^0$ is the double of an arc system $\mathcal{A}=(a_1,\dots, a_m)$ for $F$.
	
	Let $\mathcal{B}=\{B_1,\dots, B_m\}$ be the set of dual curves for $\mathcal{A}$. Let $\mathcal{B}'=\{B_1',\dots, B_m'\}$ denote the copies of these curves on the $\overline{F}$ side.
	
	Since $C_1^1,\dots, C_m^1$ is related to $C_1^0,\dots,C_m^0$ by a contact type 1 move, there exists a closed curve $T_1\subset \Sigma$ disjoint from $d$ such that 
	\begin{enumerate}
		\item $C_j^1 = \tau_{T_1}C_j^0$ for $j=1,\dots, m$, and
		\item $T_1\cap (C_1^0\cup \dots \cup C_m^0)$ is a single point.
	\end{enumerate}
	
	Since $T_1$ is disjoint from $d$, it is either contained in the interior of $F$ or the interior of $\overline{F}$. The requirement that $T_1$ intersects $C_1^0\cup\dots \cup C_m^0$ in a single point implies $T_1\in \mathcal{B}\cup\mathcal{B}'$.
	
	Similarly, since $v_i$ is related to $v_{i-1}$ by a contact type 1 move, there exists a closed curve $T_i\subset \Sigma$ disjoint from $d$ such that 
	\begin{enumerate}
		\item $C_j^i = \tau_{T_i}C_j^{i-1}$ for $j=1,\dots,m$, and
		\item $T_i\cap (C_1^{i-1}\cup\dots \cup C_m^{i-1})$ is a single point.
	\end{enumerate}
	
	We claim that the condition that $T_i$ intersects the cut system $C_1^{i-1} \cup \dots \cup C_m^{i-1}$ in a single point implies that $T_i\in \tau_{T_{i-1}}\circ \cdots \circ \tau_{T_1}(\mathcal{B}\cup\mathcal{B}')$. This is because $\tau_{T_{i-1}}\circ \cdots \circ \tau_{T_1}$ is a diffeomorphism which takes $C_1^0,\dots, C_m^0$ to $C_1^{i-1},\dots, C_m^{i-1}$, so it takes the dual curves to $C_1^0,\dots, C_m^0$ to the dual curves of $C_1^{i-1},\dots, C_m^{i-1}$.
	
	Next, we will recover a Lefschetz fibration for the Weinstein domain $W(P)$.
	By Lemma~\ref{l:LFMDdiagrams}, the vanishing cycles are determined by the $T_i$ by 
	\begin{itemize}
		\item $V_i = T_i$ if $T_i\subset F$ or 
		\item $V_i = \tau_{V_{i-1}}\circ \cdots \tau_{V_1}(T_i)\subset F$ if $T_i\subset \overline{F}$.
	\end{itemize}
	
	Let $J\subseteq \{1,\dots, n\}$ be the subset of indices such that $T_i\subset F$. Since $T_1\in \mathcal{B}\cup\mathcal{B}'$, $V_1\in \mathcal{B}$ (note that $B_1',\dots, B_m'$ are just copies of $B_1,\dots, B_m$ in the $\overline{F}$ side, and that for $i=1$ the formula for $V_1$ is the same whether $T_1\subset F$ or $T_1\subset \overline{F}$). This establishes a base case. Now we will prove that for $i>1$,
	$$V_i \in  \left(\prod_{j\in J, j<i} \tau_{V_j}  \right)(\mathcal{B}).$$
	
	Let $\mathcal{B}_j = \tau_{T_j}\circ \cdots \circ \tau_{T_1}(\mathcal{B})$ and $\mathcal{B}_j' = \tau_{T_j}\circ \cdots \circ \tau_{T_1}(\mathcal{B}')$. 
	
	Case 1: If $i\in J$ then 
	$$V_i=T_i\in \mathcal{B}_{i-1}=\tau_{T_{i-1}}\circ \cdots \circ \tau_{T_1}(\mathcal{B}) = \left(\prod_{j\in J, j<i} \tau_{V_j}  \right)(\mathcal{B})$$
	since for $j\notin J$, the Dehn twist along $T_j$ acts as the identity on the $F$ side.
	
	Case 2: For $i\notin J$, $T_i\in \mathcal{B}_{i-1}'$. 
	
	We will prove inductively in $i$ that 
	$$\tau_{V_{i-1}}\circ \cdots \circ \tau_{V_1}(\mathcal{B}_{i-1}') = \left(\prod_{j\in J, j<i} \tau_{V_j}  \right)(\mathcal{B})$$
	(after identifying $\overline{F}$ with $F$).
	Since $T_i\subset \overline{F}$, by Lemma~\ref{l:LFMDdiagrams}, $V_i = (\tau_{V_{i-1}}\circ \cdots \circ \tau_{V_1})(T_i)\in \tau_{V_{i-1}}\circ \cdots \circ \tau_{V_1}(\mathcal{B}_{i-1}')$, so the result will follow once we prove this claim. 
	
	If $i-1\in J$, then $\mathcal{B}_{i-1}' = \tau_{T_{i-1}}(\mathcal{B}_{i-2}') = \mathcal{B}_{i-2}'$ since the Dehn twist along $T_{i-1}$ is performed on the $F$ side. 
	
	Therefore
	$$\tau_{V_{i-1}}\circ \cdots \circ \tau_{V_1}(\mathcal{B}_{i-1}') = \tau_{V_{i-1}}\circ \cdots \circ \tau_{V_1}(\mathcal{B}_{i-2}').$$
	By inductive hypothesis
	$$\tau_{V_{i-2}}\circ \cdots \circ \tau_{V_1}(\mathcal{B}_{i-2}') = \left(\prod_{j\in J, j<i-1} \tau_{V_j}  \right)(\mathcal{B}).$$
	Therefore
	$$\tau_{V_{i-1}}\circ \cdots \circ \tau_{V_1}(\mathcal{B}_{i-1}') = \tau_{V_{i-1}}\circ \left(\prod_{j\in J, j<i-1} \tau_{V_j}  \right)(\mathcal{B}) = \left(\prod_{j\in J, j<i} \tau_{V_j}  \right)(\mathcal{B}).$$
	
	If $i-1\notin J$, then with respect to the $F$ orientation $\mathcal{B}_{i-1}' = \tau_{T_{i-1}}^{-1}(\mathcal{B}_{i-2}')$.
	
	Using the conjugation formula
	$$\tau_{T_{i-1}}^{-1} = \tau_{(\tau_{V_{i-2}}\circ \cdots \circ \tau_{V_1})^{-1}(V_{i-1})}^{-1} = (\tau_{V_{i-2}}\circ \cdots \circ \tau_{V_1})^{-1}\tau_{V_{i-1}}^{-1}(\tau_{V_{i-2}}\circ \cdots \circ \tau_{V_1}).$$
	
	Therefore
	$$(\tau_{V_{i-1}}\circ \cdots \circ \tau_{V_1})(\mathcal{B}_{i-1}') = (\tau_{V_{i-1}}\circ \cdots \circ \tau_{V_1}) \tau_{T_{i-1}}^{-1}(\mathcal{B}_{i-2}')$$
	$$ = (\tau_{V_{i-1}}\circ \cdots \circ \tau_{V_1}) (\tau_{V_{i-2}}\circ \cdots \circ \tau_{V_1})^{-1}\tau_{V_{i-1}}^{-1}(\tau_{V_{i-2}}\circ \cdots \circ \tau_{V_1}) (\mathcal{B}_{i-2}')$$
	$$=(\tau_{V_{i-2}}\circ \cdots \circ \tau_{V_1}) (\mathcal{B}_{i-2}')$$
	which is equal to
	$$\left(\prod_{j\in J, j<i-1} \tau_{V_j}  \right)(\mathcal{B})$$
	by inductive hypothesis. Since $i-1\notin J$, this is the same as
	$$\left(\prod_{j\in J, j<i} \tau_{V_j}  \right)(\mathcal{B}).$$
	
\end{proof}

\begin{definition}\label{def:visibleinvisible}
	There are two different kinds of vanishing cycles in the Lefschetz fibrations constructed in the previous lemma. The $V_i$ so that $i\in J$ coming from $T_i$ on the $\Sigma^+=F$ side which we will refer to as \emph{visible} curves, and the $V_j$, where $j\notin J$ coming from $T_i$ on the $\Sigma^-=\overline{F}$ side, which we will refer to as \emph{invisible} curves. 
\end{definition}

Now we begin a classification of Weinstein domains supported by multisections with divides with $\cL=0$.
First we consider the simplest case of $(\Sigma,d)$, where $\Sigma$ is a genus $1$ surface divided by $d$ into two annuli.

\begin{proposition}\label{p:L0annulus}
	Suppose $F$ is an annulus. Let $(\Sigma,d) = (F\cup_\partial \overline{F},\partial F)$. Let $(W,\omega,V)$ be a Weinstein domain supported by a multisection with divides $\mathfrak{M}$ with core surface $(\Sigma,d)$ and $n$ sectors. If $\mathcal{L}_{(\Sigma,d)}(\mathfrak{M}) = 0$, then $X$ is a linear plumbing of $n-1$ copies of $D^*S^2$.
\end{proposition}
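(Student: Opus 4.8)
The plan is to feed the hypothesis into Lemma~\ref{l:L0LF} and then recognize the resulting Lefschetz fibration as the Milnor fiber of an $A_k$ singularity. First I would record the combinatorics of the annulus: here $F$ has genus $0$ and $b=2$ boundary components, so $m=2g+b-1=1$; an arc system for $F$ is a single properly embedded arc $a_1$ joining the two boundary circles, and its unique dual curve $B_1$ is the core circle $c$ of the annulus. The feature I will exploit is that $c$ is the \emph{only} essential simple closed curve on $F$ up to isotopy.

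Since $\cL_{(\Sigma,d)}(\mathfrak{M})=0$, there is an oriented path $P$ in $CC(\Sigma,d)$ with $N_0(P)=0$ and $\mathfrak{M}(P)=\mathfrak{M}$, built from $n$ contact type $1$ edges (one per sector), so that $W$ is Weinstein homotopic to $W(P)$. Applying Lemma~\ref{l:L0LF} with $\mathcal{B}=\{B_1\}=\{c\}$, the domain $W(P)$ is supported by a Lefschetz fibration over $D^2$ with fiber $F$ and vanishing cycles $V_1,\dots,V_n$ satisfying $V_i\in\left(\prod_{j\in J,\,j<i}\tau_{V_j}\right)(\{c\})$. I would then argue by induction on $i$ that every $V_i$ is isotopic to $c$: the base case is $V_1=c$, and if $V_1,\dots,V_{i-1}$ are all isotopic to $c$ then $\prod_{j\in J,\,j<i}\tau_{V_j}$ is isotopic to a power of $\tau_c$, which fixes $c$ up to isotopy, forcing $V_i\simeq c$. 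After isotoping the vanishing cycles we may take $V_1=\cdots=V_n=c$, so $W(P)$ is supported by the Lefschetz fibration $\pi_n\colon E_n\to D^2$ with annulus fiber and all $n$ vanishing cycles equal to the core (all positive, since $P$ is oriented).

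It remains to identify $E_n$. Because a Lefschetz fibration over the disk is determined up to isomorphism by its fiber together with the ordered tuple of vanishing cycles, $\pi_n$ is the Lefschetz fibration carried by the Milnor fiber of the Brieskorn singularity $\{x^2+y^2+z^n=0\}$ under the projection $(x,y,z)\mapsto z$: the regular fiber $\{x^2+y^2=\epsilon-z^n\}$ is an annulus, there are exactly $n$ critical values (the $n$-th roots of $\epsilon$), and each vanishing cycle is the core $\{x^2+y^2=\mathrm{const}\}$. Equivalently, via handle calculus, $\pi_n$ presents $E_n$ as $D^4$ together with one $1$-handle (producing the annulus fiber) and $n$ two-handles attached along parallel copies of $c$ with framing $-1$ relative to the fiber; the first such two-handle cancels the $1$-handle, and the remaining $n-1$ two-handles form a chain of unknots with framing $-2$ and consecutive ones Hopf linked --- the standard Kirby diagram for the linear plumbing of $n-1$ copies of $D^*S^2$ (with the convention that this is $D^4$ when $n=1$). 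By Theorem~\ref{thm:LFWeinsteincorrespondence} and uniqueness of the supported Weinstein domain, $W$ is this linear plumbing.

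The main obstacle is this last identification: it rests on the standard fact that the $A_{n-1}$ Milnor fiber (equivalently, the annulus Lefschetz fibration with $n$ core vanishing cycles) is the linear plumbing of $n-1$ copies of $D^*S^2$. I would either cite this directly or run the short Kirby-calculus computation sketched above, being careful that the $-1$ fiber-framing of the vanishing-cycle two-handles --- which holds precisely because $P$ is an oriented path, hence all Lefschetz singularities are positive --- yields the $-2$-framed chain rather than its mirror, and that the linking number of consecutive surviving two-handles is exactly $\pm 1$.
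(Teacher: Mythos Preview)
Your argument is correct and follows the same overall route as the paper: reduce via Lemma~\ref{l:L0LF} to the annulus Lefschetz fibration whose $n$ vanishing cycles are all the core, and then identify the resulting $4$-manifold as the $A_{n-1}$ linear plumbing.

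The only difference is in how the final identification is carried out. The paper works entirely in the Weinstein category: it draws the Legendrian lift of the core in $S^1\times S^2$ (following~\cite{CasalsMurphy}), takes $n$ Reeb-pushoff copies, and performs explicit Legendrian handleslides and a handle cancellation to arrive at a chain of $n-1$ Legendrian unknots with $tb=-1$, which is visibly the Weinstein plumbing of $n-1$ copies of $D^*S^2$. You instead recognize the annulus fibration with $n$ core vanishing cycles as the $A_{n-1}$ Milnor fiber (or do the smooth Kirby calculus) and then invoke the uniqueness clause of Theorem~\ref{thm:LFWeinsteincorrespondence} to recover the Weinstein structure. Both are valid; the paper's version is self-contained and exhibits the Weinstein presentation directly, while yours is shorter but leans on the standard identification of the $A_{n-1}$ Milnor fiber.
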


\begin{proof}
	Since $\mathcal{L}_{(\Sigma,d)}(\mathfrak{M})=0$, there is an oriented path $P$ in $CC(\Sigma,d)$ such that $\mathfrak{M}(P)=\mathfrak{M}$ whose edges are all contact type 1 moves. Then $(W,\omega,V)$ is supported by a Lefschetz fibration with annulus fiber and vanishing cycles as in Lemma~\ref{l:L0LF}. In this case the set $\mathcal{B}$ is a single curve $\alpha$, the core of the annulus. Since $\tau_\alpha(\mathcal{B})=\mathcal{B}$, each type 1 move corresponds to a right-handed Dehn twist about the core $\alpha$ of the annulus, contributing a single vanishing cycle $\alpha$ to the Lefschetz fibration. 
	
	\begin{figure}
		\centering
		\includegraphics[scale=.2]{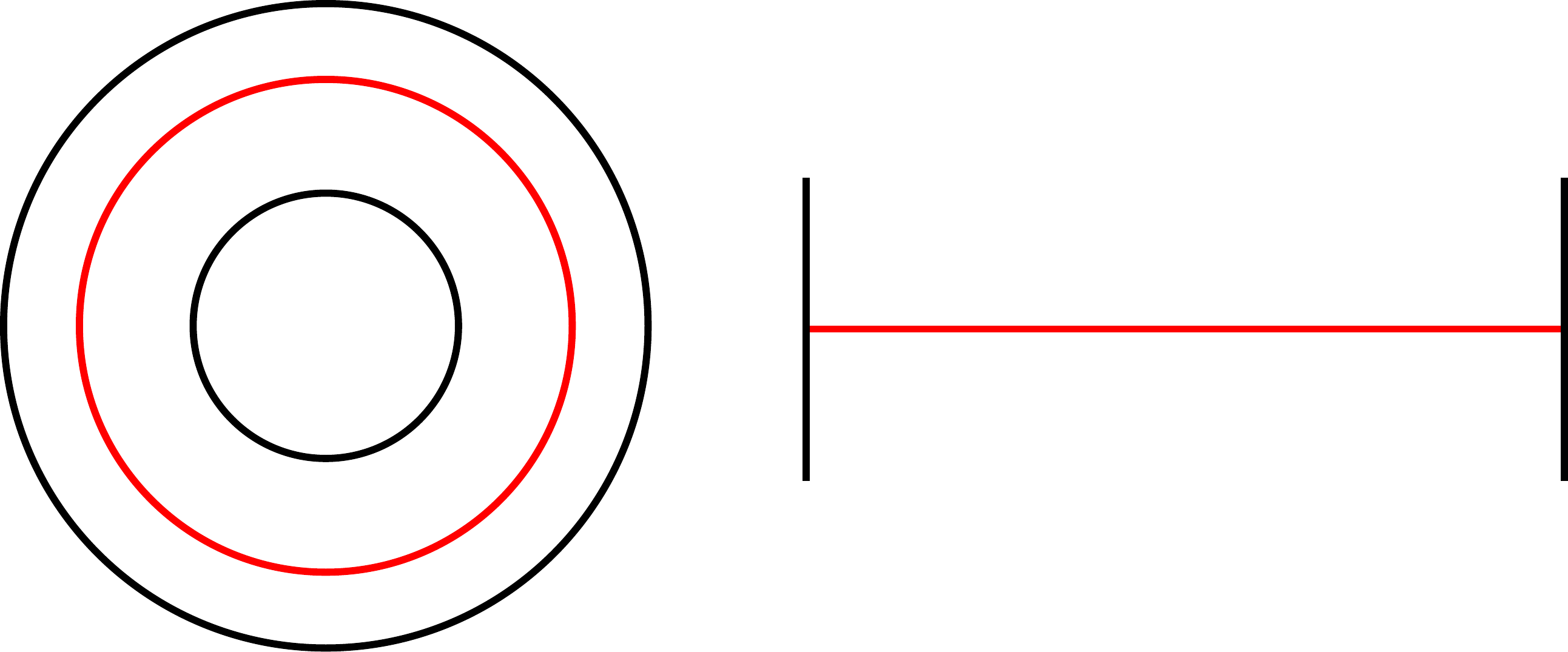}
		\caption{The curve $\alpha$ and its Legendrian lift $\Lambda_\alpha$.}
		\label{fig:LF_annulus_one_VC}
	\end{figure}
	
	Given a Lefschetz fibration with fiber $F$, a strategy was developed in~\cite{CasalsMurphy} to draw a front projection of the Legendrian attaching spheres corresponding to the vanishing cycles in $\#_k S^1\times S^2$, where $k=b_1(F)$. The key is to find the Legendrian lift of each vanishing cycle.
	
	In this case, $b_1(F)=1$, so there is a unique $1$-handle, denoted by two vertical bars. The Legendrian lift of the vanishing cycle $\alpha$ is the curve $\Lambda_\alpha$ depicted in Figure~\ref{fig:LF_annulus_one_VC}. 
	
	If the oriented path corresponding to $X$ consists of $n$ edges, then the Weinstein presentation of $X$ consists of $n$ parallel copies of $\Lambda_\alpha$, each offset slightly in the Reeb direction. This is depicted in Figure~\ref{fig:WD_repeated_alpha}. Also depicted in this figure is a sequence of handleslides, then isotopies, then a single handle cancellation transforming the presentation into a link of unknots. The sequence of handleslides and isotopies depicted in Figure~\ref{fig:WD_chain_of_unknots} demonstrates how to simplify this link into a linear chain of unknots. 
	
	Because a Weinstein presentation for $D^*S^2$ is represented by a single Legendrian unknot with $tb=-1$, and linear chain of such Legendrian unknots is a Weinstein presentation for a plumbing of copies of $D^*S^2$, yielding the result.
	
	\begin{figure}
		\centering
		\includegraphics[scale=.2]{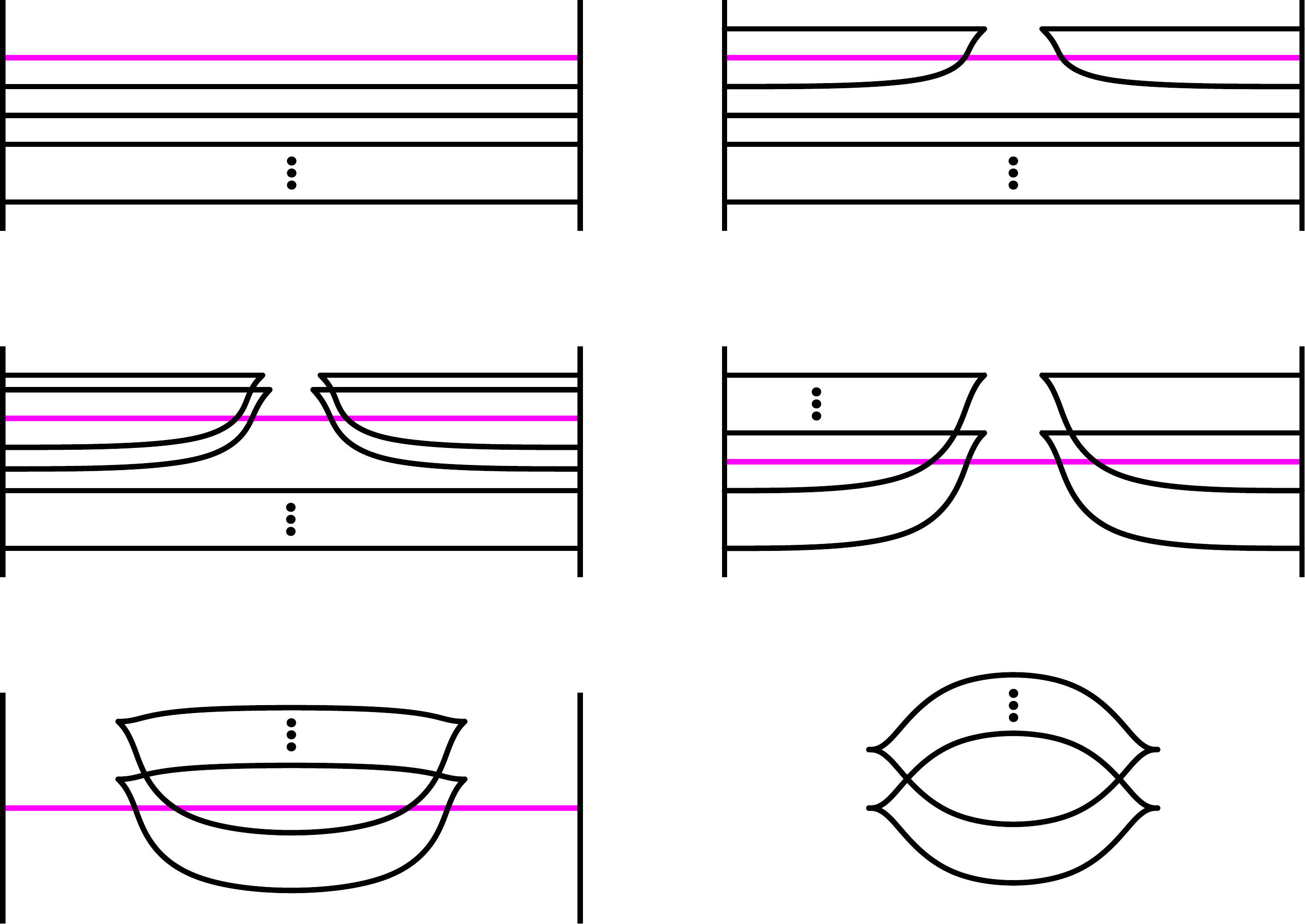}
		\caption{First we handleslide each black 2-handle over the pink 2-handle. Then we move the black unknots through the 1-handle. Finally we cancel the pink 2-handle with the 1-handle to obtain a link of $n-1$ Legendrian unknots.}
		\label{fig:WD_repeated_alpha}
	\end{figure}
	
	\begin{figure}
		\centering
		\includegraphics[scale=.2]{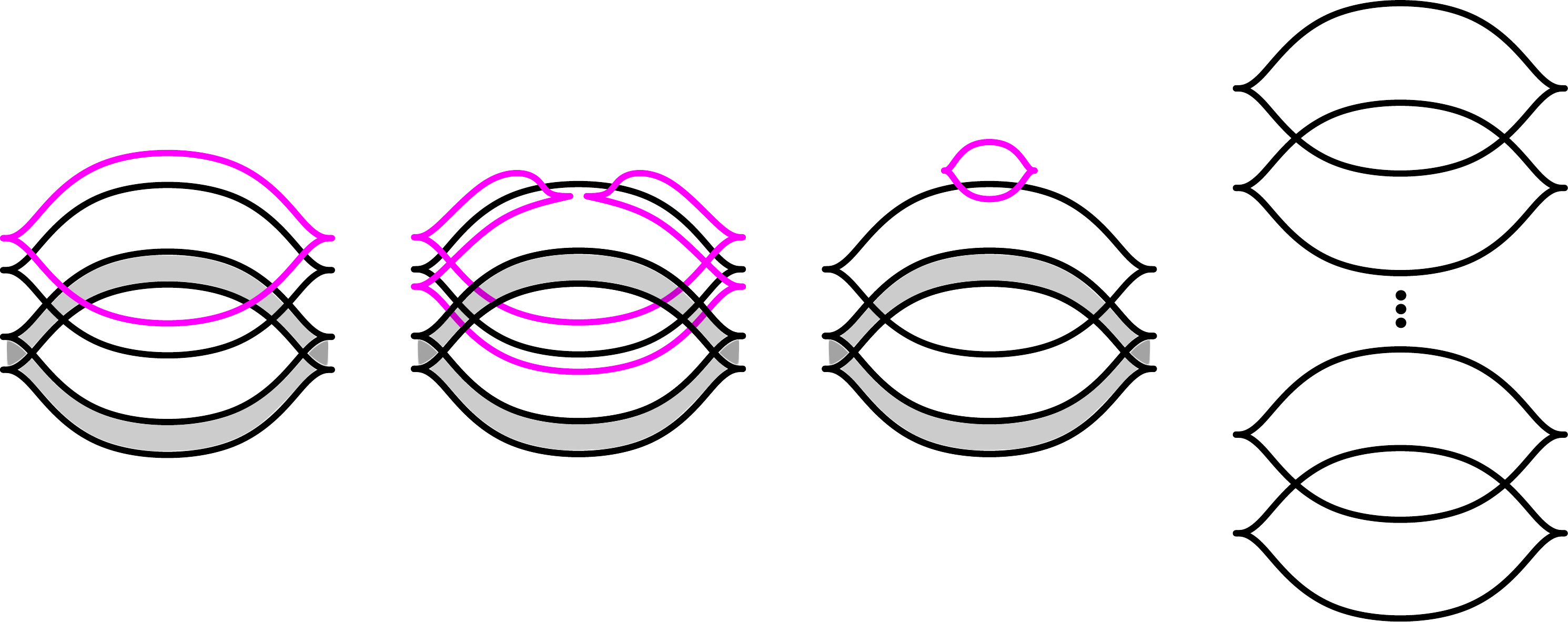}
		\caption{The grey band represents some number of parallel unknots. A handleslide and a series of isotopies unlinks the topmost pink unknot from all but all but one other unknot. The result of repeating this process on all of the parallel unknots is a chain of unknots.}
		\label{fig:WD_chain_of_unknots}
	\end{figure}

\end{proof}

\begin{theorem}\label{thm:L0Weinsteinhandles}
	Let $W$ be a Weinstein domain with $\cL(W)=0$. Then $W$ admits a Weinstein presentation where none of the $2$-handles pass through $1$-handles, and each $2$-handle is attached along a Legendrian unknot with $tb=-1$. (There can be nontrivial linking between the attaching spheres of the $2$-handles.)
\end{theorem}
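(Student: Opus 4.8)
The plan is to reduce the general statement to the annulus case handled in Proposition~\ref{p:L0annulus}, using Lemma~\ref{l:L0LF} to control the structure of the Lefschetz fibration. Since $\cL(W)=0$, there is a pair $(\Sigma,d)$ with $\Sigma = F \cup_\partial \overline{F}$, $d = \partial F$, and an oriented path $P$ in $CC(\Sigma,d)$ with $N_0(P)=0$ such that $W=W(P)$. By Lemma~\ref{l:L0LF}, $W$ is supported by a Lefschetz fibration with fiber $F$ whose vanishing cycles are $V_i \in \left(\prod_{j\in J, j<i}\tau_{V_j}\right)(\mathcal{B})$ for a fixed set $\mathcal{B}=\{B_1,\dots,B_m\}$ of dual curves to some arc system $\mathcal{A}$ for $F$, and a subset $J\subseteq\{1,\dots,n\}$.

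The key structural observation is that each vanishing cycle $V_i$ is, up to the ambient mapping class $\prod_{j\in J, j<i}\tau_{V_j}$, one of the finitely many dual curves $B_k$, and that $B_k$ meets exactly one arc $a_k$ of $\mathcal{A}$ transversally once and is disjoint from the others. First I would pass to the Weinstein handle picture via the Casals--Murphy procedure \cite{CasalsMurphy}: choosing the arc system $\mathcal{A}$ for $F$ gives $m$ one-handles, and each dual curve $B_k$ admits a Legendrian lift $\Lambda_{B_k}$ that passes over exactly the $k$-th one-handle exactly once (and no others), so $\Lambda_{B_k}$ together with that one-handle forms a canceling pair. The $2$-handles corresponding to the other vanishing cycles $V_i$ (which are images of various $B_k$ under products of Dehn twists $\tau_{V_j}$) are obtained by handleslides: the Dehn twist $\tau_{V_j}$ acting on a curve corresponds at the level of attaching spheres to sliding over the $2$-handle attached along $\Lambda_{V_j}$. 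Thus the entire Weinstein handle diagram is built from $m$ canceling $1$--$2$ pairs (one per one-handle), plus extra $2$-handles each of which, after a sequence of handleslides undoing the Dehn twist conjugations, becomes a parallel copy of one of the canceling $2$-handles.

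The heart of the argument is then an induction on the number $m$ of one-handles, generalizing the two figures (Figures~\ref{fig:WD_repeated_alpha} and \ref{fig:WD_chain_of_unknots}) in the proof of Proposition~\ref{p:L0annulus}. For a fixed one-handle index $k$, collect all $2$-handles whose Legendrian attaching sphere runs over the $k$-th one-handle: after handleslides these are the canceling $2$-handle $\Lambda_{B_k}$ together with some number of parallel push-offs of it. Handlesliding each parallel push-off over $\Lambda_{B_k}$, pushing the result through the $k$-th one-handle, and then canceling $\Lambda_{B_k}$ against the $k$-th one-handle (exactly as in Figure~\ref{fig:WD_repeated_alpha}) removes that one-handle and replaces the push-offs by Legendrian unknots. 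One must check that this cancellation does not reintroduce $2$-handles over the remaining one-handles, which holds because $\Lambda_{B_k}$ runs over only the $k$-th one-handle; any $2$-handle linking $\Lambda_{B_k}$ but running over other one-handles simply acquires some linking/Reeb-direction pushoffs after sliding, which is permitted by the statement. Iterating over $k=1,\dots,m$ eliminates all one-handles and leaves a Weinstein presentation in which every $2$-handle is attached along a Legendrian unknot; tracking the Thurston--Bennequin invariant through handleslides (each handleslide over a $tb=-1$ unknot, suitably positioned, preserves $tb=-1$, just as in Figure~\ref{fig:WD_chain_of_unknots}) shows each such unknot has $tb=-1$, while the linking among them is unconstrained.

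The main obstacle I anticipate is the bookkeeping in the inductive step: ensuring that the handleslides needed to ``undo'' the Dehn-twist conjugations $\prod_{j\in J,j<i}\tau_{V_j}$ applied to a dual curve $B_k$ can be carried out entirely among the $2$-handles (so that no one-handle is traversed an extra time in the process) and that, after canceling the $k$-th $1$--$2$ pair, the attaching spheres of the surviving $2$-handles are genuinely Legendrian unknots rather than more complicated Legendrians. This requires a careful local analysis near each one-handle, using the fact that the dual curves $B_1,\dots,B_m$ pairwise intersect at most once (as straight segments in the polygonal model of $F$), so the corresponding Legendrian lifts can be arranged in a standard front where the relevant handleslides are visibly $tb$-preserving. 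Once this local model is set up, the global induction and the $tb=-1$ computation follow the template of Proposition~\ref{p:L0annulus}.
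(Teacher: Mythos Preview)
Your outline is correct and follows essentially the same strategy as the paper, but you are missing one clean simplification that resolves exactly the ``bookkeeping obstacle'' you identify. Rather than lifting the twisted curves $V_i\in(\prod_{j\in J,j<i}\tau_{V_j})(\mathcal{B})$ to the Kirby diagram and then undoing the conjugations by handleslides there, the paper performs \emph{Hurwitz moves} on the Lefschetz fibration itself, \emph{before} lifting. Concretely, separate the $V_i$ into visible ($i\in J$) and invisible ($i\notin J$) curves; each invisible curve $U=\tau_{W_{i-1}}\cdots\tau_{W_1}(B_s)$ is Hurwitz-moved leftward past the preceding visible curves, stripping off one Dehn twist at each step, until it becomes $B_s$ itself; then each visible curve $W_j=\tau_{W_{j-1}}\cdots\tau_{W_1}(B_r)$ is Hurwitz-moved leftward similarly until it becomes $B_r$. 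After these moves the ordered list of vanishing cycles consists entirely of elements of $\mathcal{B}$. Now the Casals--Murphy lift is trivial: every $2$-handle passes through exactly one $1$-handle exactly once, and the cancellation step (your Figure~\ref{fig:WD_repeated_alpha} move) goes through immediately for each $1$-handle independently, with no induction needed and no worry about reintroducing crossings.

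Two small corrections to your write-up: not every $1$-handle need have a $2$-handle running over it (some $B_k$ may fail to appear among the vanishing cycles), so you do not always get $m$ canceling pairs---those $1$-handles simply remain, which is consistent with the theorem's conclusion. And the $tb=-1$ claim follows directly once all vanishing cycles are in $\mathcal{B}$: each lift is a cuspless arc through one $1$-handle, and after the slide-and-cancel it becomes a standard $tb=-1$ unknot, so no separate $tb$-tracking through iterated slides is required.
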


%
%

\begin{proof}
	If $\cL(W)=0$, there exists a multisection with divides diagram for $W$ with $(\Sigma,d) = (F\cup_\partial \overline{F},\partial F)$ corresponding to an oriented path in $CC(\Sigma,d)$ with $n$ edges all of which are contact type 1. By a diffeomorphism of $\Sigma$ supported on $\overline{F}$, we assume that the first contact cut system in the path is the double of an arc system $\mathcal{A}_0$ on $F$.
	
	Let the genus of $F$ be $g$ and the number of boundary components $b$, and let $m=2g+b-1$. Let $\mathcal{B}=\{B_1,\dots, B_m\}$ denote the set of dual curves in $F$ to the arc system $\mathcal{A}$.
	
	
	By Lemma~\ref{l:L0LF}, $W$ is supported by a Lefschetz fibration with fiber $F$ and vanishing cycles $(V_1, \dots, V_n)$ where
	$$V_i \in  \left(\prod_{j\in J, j<i} \tau_{V_j}  \right)(\mathcal{B}).$$
	
	We will modify this sequence by certain Hurwitz moves. A Hurwitz move changes the ordered list $(V_1, \dots, V_i, V_{i+1},\dots, V_n)$ to the list $(V_1, \dots, \tau_{V_i}^{-1}(V_{i+1}), V_{i},\dots, V_n)$. Lefschetz fibrations related by Hurwitz moves are equivalent and thus support the same underlying Weinstein domain. We will choose Hurwitz moves so that at the end, all the vanishing cycles are elements of $\mathcal{B}$. 
	
	First we will replace the notation $V_1,\dots, V_n$ with an interspersed combination of  $U_1,\dots, U_l$ and $W_1,\dots, W_j$ where the $U_i$ are the invisible curves, and the $W_i$ are visible curves. We consider the entire sequence of vanishing cycles from left to right. If $U_1$ is the first invisible curve, we note that $U_1 = \tau_{W_{i-1}}(\tau_{W_{i-2}}(\dots(\tau_{W_1}(B_{s_1}))))$ where  $B_{s_1}\in \mathcal{B}$. We modify the sequence as follows:
	\begin{align*}
		&(W_1, W_2, \dots, W_{i-2}, W_{i-1}, U_1, V_{i+1} \dots, V_n)\\
		=& (W_1, W_2, \dots, W_{i-2}, W_{i-1}, \tau_{W_{i-1}}(\tau_{W_{i-2}}(\dots(\tau_{W_1}(B_{s_1})))),V_{i+1}, \dots, V_n)\\
		\mapsto &(W_1, W_2, \dots, W_{i-2}, \tau_{W_{i-2}}(\dots(\tau_{W_1}(B_{s_1}))), W_{i-1},V_{i+1}, \dots, V_n)\\
		&\dots\\
		\mapsto &(B_{s_1}, W_1, W_2, \dots, W_{i-2}, W_{i-1}, V_{i+1}, \dots, V_n)
	\end{align*}
	We repeat this procedure for each of the invisible curves $U_i$ from the smallest index to the largest, resulting in a new sequence of vanishing cycles of the form 
	$(B_{s_{1}}, B_{s_{2}}, \dots, B_{s_{l}},W_1,\dots,W_j)$.
	
	Next we note that $W_j = \tau_{W_{j-1}}(\tau_{W_{j-2}}(\dots(\tau_{W_1}(B_{r_j}))))$ for some $B_{r_j}\in \mathcal{B}$ and we modify the visible curves in the sequence from right to left as follows:
	\begin{align*}
		&(B_{s_{1}}, \dots, B_{s_{l}},W_1,\dots,W_{j-2}, W_{j-1}, W_j)  \\
		=&(B_{s_{1}},  \dots, B_{s_{l}},W_1,\dots,W_{j-2}, W_{j-1}, \tau_{W_{j-1}}(\tau_{W_{j-2}}(\dots(\tau_{W_1}(B_{r_j})))))\\
		\mapsto &(B_{s_{1}},  \dots, B_{s_{l}},W_1,\dots,W_{j-2}, \tau_{W_{j-2}}(\dots(\tau_{W_1}(B_{r_j}))), W_{j-1})\\
		&\dots\\
		\mapsto &(B_{s_{1}},\dots, B_{s_{l}},B_{r_j},W_1,\dots,W_{j-2}, W_{j-1})
	\end{align*}
	We repeat this procedure for each the visible curves $W_j$ from the largest index to the smallest, resulting in a final sequence of vanishing cycles of the form $(B_{s_{1}}, \dots, B_{s_{l}},B_{r_j},B_{r_{j-1}},\dots, B_{r_1})$ where every curve is in $\mathcal{B}$. 
	
	Now to obtain a Weinstein presentation corresponding to this Lefshetz fibration, we take the Legendrian lifts of all of these curves. Since all the curves are in $\mathcal{B}$, the front projections of the Legendrian lifts will each be a simple arc through a single $1$-handle with no cusps and no self-crossings. For each 1-handle which intersects a non-zero number of 2-handles, we may handleslide all but one of these 2-handles over the lowest 2-handle, then isotope these 2-handles through that 1-handle to detach them from the 1-handle, and then cancel the 1-handle with the remaining 2-handle. The result is a presentation consisting of $k$ 1-handles and $n-m+k$ 2-handles attached along a link whose components are all $tb=-1$ unknots, where $k$ is the number of curves in $\mathcal{B}$ not represented by $(B_{s_{1}}, \dots, B_{s_{l}},B_{r_j},\dots, B_{r_1})$. See Figures~\ref{fig:genus1_curves} and~\ref{fig:L0_genus1} for an example on a genus 1 surface. See Figures~\ref{fig:genus2_curves} and~\ref{fig:L0_genus2} for an example on a genus 2 surface. 
\end{proof}

\begin{figure}
	\centering
	\includegraphics[scale=0.2]{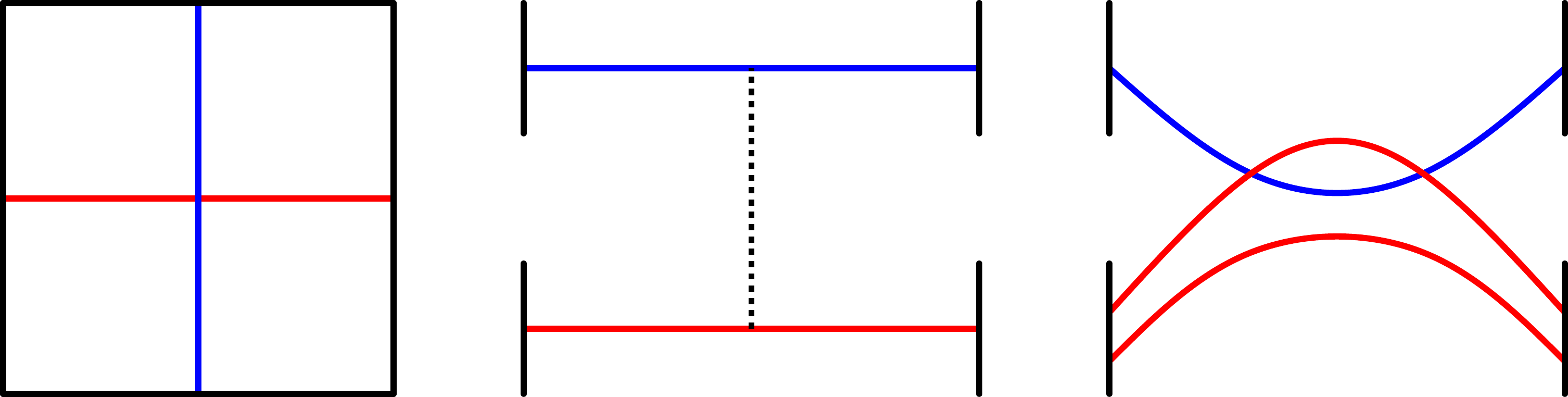}
	\caption{The curves $\alpha$ (red) and $\beta$ (blue) on a genus 1 surface, and their corresponding Legendrian lifts. The lifted curves may be linked near the dotted black line depending on the order they appear in the monodromy of the Lefshetz fibration. For instance in the rightmost figure, the monodromy consists of a Dehn twist about $\alpha$, then $\beta$, and then $\alpha$.}
	\label{fig:genus1_curves}
\end{figure}

\begin{figure}
	\centering
	\includegraphics[scale=0.2]{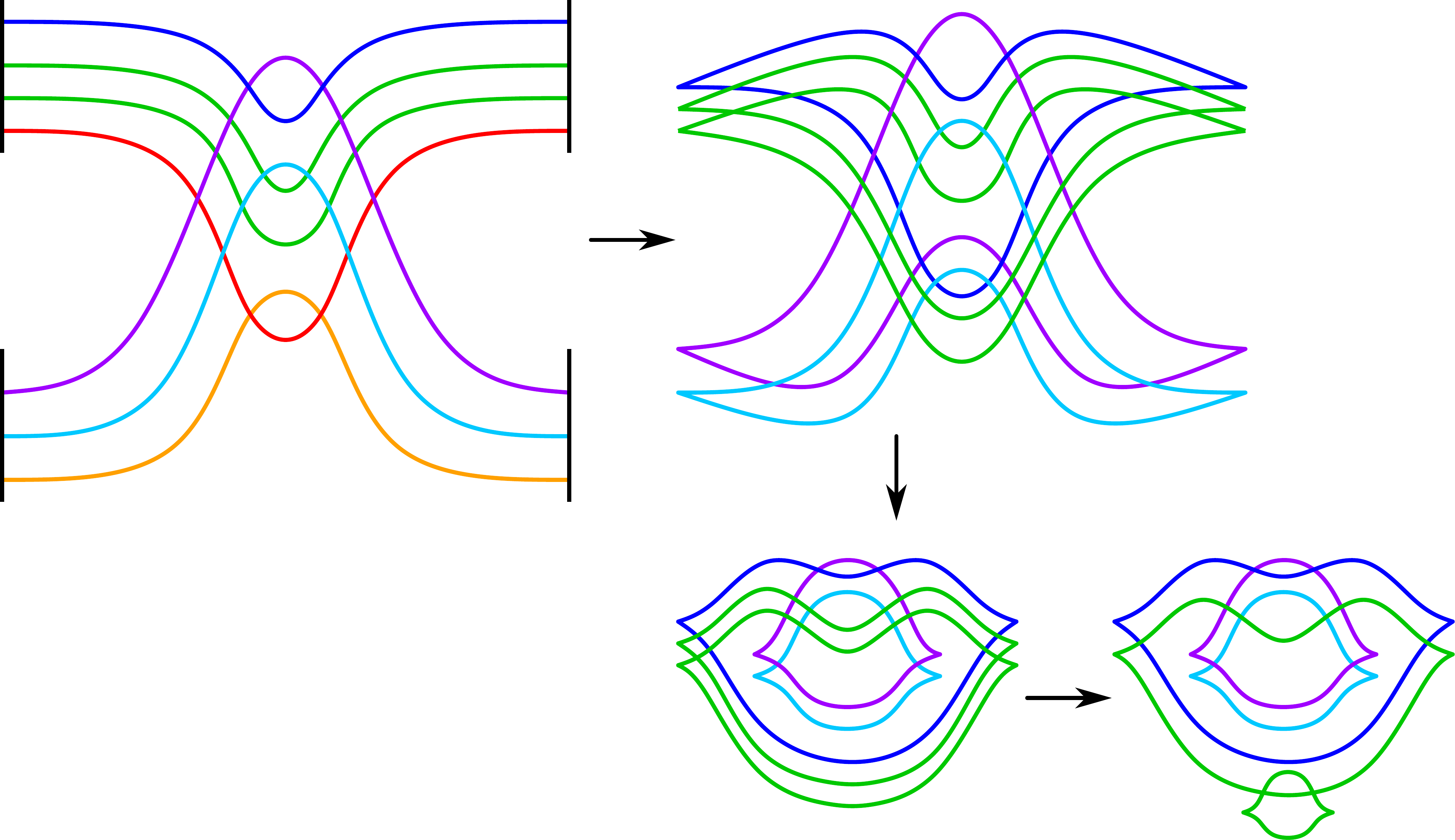}
	\caption{Construction of a Weinstein presentation for a Weinstein domain with $\mathcal{L}(W) = 0$ and $\Sigma$ of genus 1. The total monodromy of the Lefshetz fibration (after an application of relevant Hurwitz moves) is given by the vanishing cycles $(\beta,\alpha,\beta,\beta,\alpha,\beta,\alpha)$. The result is a link of five $tb=-1$ unknots.}
	\label{fig:L0_genus1}
\end{figure}

\begin{figure}
	\centering
	\includegraphics[scale=0.2]{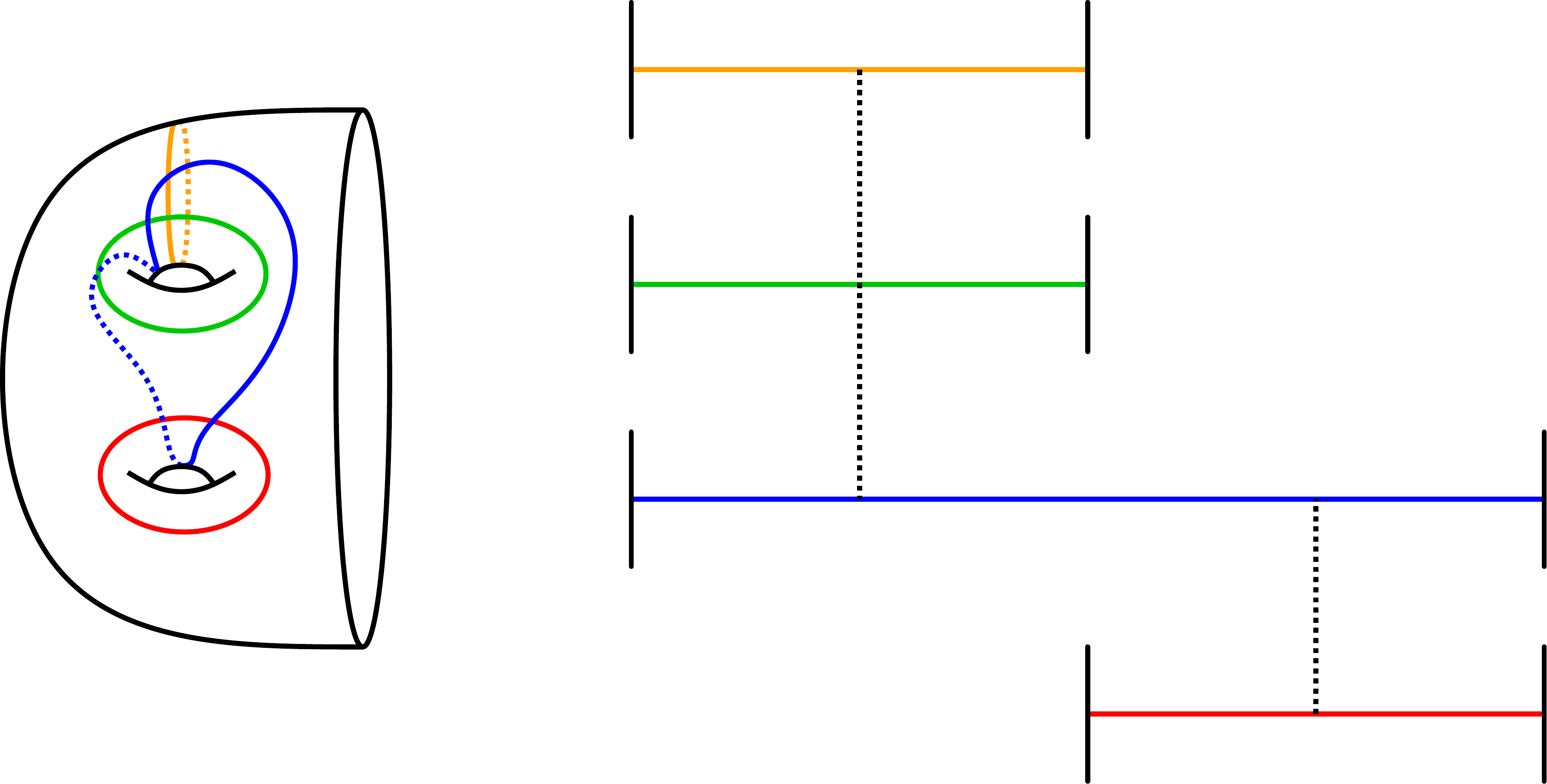}
	\caption{A choice of curves $\alpha$ (red), $\beta$ (blue), $\gamma$ (green), and $\delta$ (orange) on a genus 2 surface, and their corresponding Legendrian lifts. The lifted curves may be linked near the dotted black lines (determined by whether these curves intersect on the surface) depending on the order they appear in the monodromy of the Lefshetz fibration.}
	\label{fig:genus2_curves}
\end{figure}

\begin{figure}
	\centering
	\includegraphics[scale=0.2]{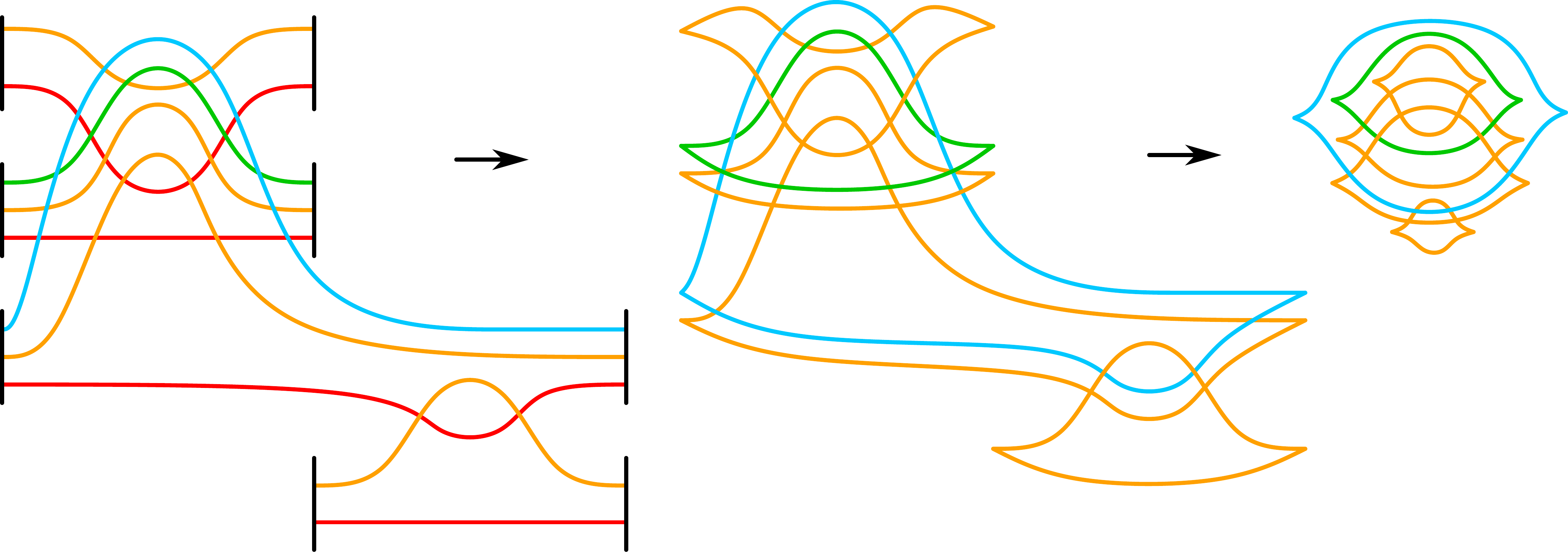}
	\caption{Construction of a Weinstein presentation for a Weinstein domain with $\mathcal{L}(W) = 0$ and $\Sigma$ of genus 2. The total monodromy of the Lefshetz fibration (after an application of relevant Hurwitz moves) is given by the vanishing cycles $(\alpha,\beta,\gamma,\delta,\alpha,\beta,\gamma,\delta,\gamma,\beta)$. The result is a link of six $tb=-1$ unknots.}
	\label{fig:L0_genus2}
\end{figure}

\begin{corollary}\label{cor:pi1}
	If $(W,\omega,V)$ is a Weinstein domain with $\cL = 0$, then $\pi_1(W)$ is a free group.
\end{corollary}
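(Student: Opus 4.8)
The plan is to reduce immediately to the handle picture supplied by Theorem~\ref{thm:L0Weinsteinhandles} and then read off $\pi_1$ from the $1$-skeleton. First I would recall the general principle: a $4$-dimensional Weinstein domain is built by attaching Weinstein handles of index at most $2$, so it has a handle decomposition with a single $0$-handle, some number $k$ of $1$-handles, and some $2$-handles, with no handles of index $3$ or $4$. Hence $\pi_1(W)$ is determined entirely by the handles of index $\le 2$: it is the free group $F_k$ on generators dual to the $1$-handles, modulo the normal subgroup generated by the words in $\pi_1$ of the $1$-skeleton spelled out by the attaching circles of the $2$-handles.

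Next I would invoke Theorem~\ref{thm:L0Weinsteinhandles}: since $\cL(W)=0$, we may choose a Weinstein presentation in which no $2$-handle passes through a $1$-handle (and each is attached along a $tb=-1$ Legendrian unknot). The key observation is that an attaching circle which does not pass over any $1$-handle can be isotoped into the part of the boundary coming from the $0$-handle alone, i.e.\ into a $3$-ball $\partial_- (D^4)$ before the $1$-handles are glued on. Such a circle is nullhomotopic in the $1$-skeleton $\natural_k (S^1\times D^3)$, so each $2$-handle contributes the trivial relation. Therefore $\pi_1(W)\cong F_k$, a free group, which is the claim.

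I do not expect a serious obstacle: essentially all of the work is carried by Theorem~\ref{thm:L0Weinsteinhandles}. The only points that need a sentence of care are (i) that ``the $2$-handle attaching sphere does not pass through any $1$-handle'' really does imply it is nullhomotopic in the $1$-skeleton, and (ii) that the van Kampen computation of $\pi_1$ from a handle decomposition receives no contribution from handles of index $\ge 3$ — here using that Weinstein handles in dimension $4$ have index at most $2$. Both are routine once the handle structure from Theorem~\ref{thm:L0Weinsteinhandles} is in place, so the proof is short.
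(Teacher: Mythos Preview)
Your proposal is correct and follows essentially the same approach as the paper: both invoke Theorem~\ref{thm:L0Weinsteinhandles} and observe that since the $2$-handle attaching spheres avoid the $1$-handles, they contribute trivial relations and $\pi_1(W)$ is free on the $1$-handle generators. The paper's proof is a single sentence to this effect; your version simply spells out the van Kampen computation in more detail.
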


\begin{proof}
	This follows immediately from the handle presentation since none of the $2$-handle attaching spheres pass through the $1$-handles.
\end{proof}

A slightly stronger corollary that also follows immediately from Theorem~\ref{thm:L0Weinsteinhandles} is that $(W,\omega)$ splits as a boundary sum of some number of copies of $S^1\times D^3$ with a simply connected Weinstein domain.

\begin{corollary}\label{cor:L0c1}
	If $(W,\omega,V)$ is a Weinstein domain with $\cL = 0$, then $c_1(\omega)=0$.
\end{corollary}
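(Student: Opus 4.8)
The plan is to read $c_1(\omega)$ directly off the Weinstein handle presentation produced by Theorem~\ref{thm:L0Weinsteinhandles}, using the standard fact (Gompf, Gompf--Stipsicz) that for a $4$--dimensional Weinstein domain built from one $0$--handle, some $1$--handles, and $2$--handles attached along Legendrian knots $\Lambda_1,\dots,\Lambda_r$ in the contact boundary of the subcritical skeleton, the relative first Chern class is computed on the cores of the $2$--handles: if $\widehat\Sigma_j$ is the core disk of the $j$th $2$--handle capped off by a surface in the subcritical skeleton bounded by $\Lambda_j$, then $\langle c_1(\omega),[\widehat\Sigma_j]\rangle=\operatorname{rot}(\Lambda_j)$, and the $0$-- and $1$--handles contribute nothing. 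So the first step is to invoke Theorem~\ref{thm:L0Weinsteinhandles} to obtain a presentation in which no $2$--handle runs over a $1$--handle and every $\Lambda_j$ is a Legendrian unknot with $tb(\Lambda_j)=-1$.

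The second step is to observe that $\operatorname{rot}(\Lambda_j)=0$ for all $j$. This is the sharp Bennequin bound together with the Eliashberg--Fraser classification of Legendrian unknots: $tb+|\operatorname{rot}|\le -1$, and the equality case $tb=-1$ forces $\operatorname{rot}=0$. Hence $\langle c_1(\omega),[\widehat\Sigma_j]\rangle=0$ for every $j$.

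The third step is to promote this vanishing on the classes $[\widehat\Sigma_j]$ to vanishing of $c_1(\omega)$ in $H^2(W;\Z)$. Since no attaching circle passes through a $1$--handle, each $\Lambda_j$ sits in a ball inside the subcritical skeleton, and since a Weinstein domain has no handles of index $\ge 3$, the group $H_2(W;\Z)$ is free and generated by the $[\widehat\Sigma_j]$. Moreover $H_1(W;\Z)$ is free by Corollary~\ref{cor:pi1}, so $\operatorname{Ext}(H_1(W;\Z),\Z)=0$ and universal coefficients give $H^2(W;\Z)\cong\operatorname{Hom}(H_2(W;\Z),\Z)$; thus a class in $H^2(W;\Z)$ that pairs to zero with every $[\widehat\Sigma_j]$ is itself zero. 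Combining the three steps yields $c_1(\omega)=0$.

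The only real content is assembling these standard inputs correctly; I expect the main point requiring care is the rotation-number formula for $c_1$ in the presence of nontrivial linking among the $\Lambda_j$. This is not actually an obstruction: the linking affects the framings and hence the intersection form (this is what underlies Corollary~\ref{cor:even}), but $\langle c_1(\omega),[\widehat\Sigma_j]\rangle$ depends only on $\operatorname{rot}(\Lambda_j)$, so capping the core disks with subcritical Seifert surfaces is unobstructed for the purpose at hand.
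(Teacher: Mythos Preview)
Your proof is correct and follows essentially the same approach as the paper: invoke Theorem~\ref{thm:L0Weinsteinhandles}, use Gompf's rotation-number formula for $c_1$ on the $2$--handles, and observe that $tb=-1$ unknots have $\operatorname{rot}=0$. The paper's version is slightly terser---it simply asserts that the rotation number of a standard $tb=-1$ Legendrian unknot is $0$ and that $c_1$ is then represented by the zero cocycle---whereas you supply the Bennequin/Eliashberg--Fraser justification for $\operatorname{rot}=0$ and the UCT argument (via freeness of $H_1$) to pass from vanishing on the generators $[\widehat\Sigma_j]$ to vanishing in $H^2(W;\Z)$; these are useful details but do not constitute a different route.
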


\begin{proof}
	Given a Weinstein handle decomposition for a $4$-dimensional Weinstein domain, the first Chern class can be computed by~\cite[Proposition 2.3]{GompfHandlebody}. Specifically, when none of the $2$-handles pass through the $1$-handles, each of the $2$-handles corresponds to a $2$-cycle. The class $c_1(\omega)$ is represented by a cocycle which evaluates on each $2$-handle $h_i$ as the rotation number of the Legendrian attaching knot. Since Theorem~\ref{thm:L0Weinsteinhandles} tells us the Weinstein domain has a handle decomposition where all $2$-handles are attached along standard Legendrian unknots with $tb=-1$, their rotation number must be $0$. Thus $c_1(\omega)$ is represented by the zero cocycle.
\end{proof}

\begin{corollary} \label{cor:even}
	If $(W,\omega,V)$ is a Weinstein domain with $\cL = 0$, then the intersection form on $H_2(W)$ is even.
\end{corollary}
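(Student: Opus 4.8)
The plan is to deduce the corollary directly from Theorem~\ref{thm:L0Weinsteinhandles} together with the preceding Corollary~\ref{cor:L0c1}, using the standard characteristic-element description of the parity of the intersection form. First I would recall the relevant piece of Wu's formula / adjunction: for a smooth oriented $4$-manifold $W$, the intersection form $Q_W$ on $H_2(W;\Z)$ is even if and only if $Q_W(x,x)\equiv \langle w_2(W),x\rangle \pmod 2$ vanishes for all $x$, i.e. if and only if $w_2(W)$ is zero as a functional on $H_2(W;\Z)$ (equivalently $0$ in $H^2(W;\Z/2)$ in the torsion-free case). Since $w_2(W)$ is the mod $2$ reduction of $c_1(\omega)$ (an almost complex structure exists because $W$ is Weinstein/symplectic), Corollary~\ref{cor:L0c1} gives $c_1(\omega)=0$, hence $w_2(W)=0$, hence $Q_W$ is even.

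A cleaner and more self-contained route, which I would actually write out, is to argue from the explicit handle picture. By Theorem~\ref{thm:L0Weinsteinhandles}, $W$ has a Weinstein handle decomposition with no $2$-handle running over a $1$-handle, and with every $2$-handle attached along a standard Legendrian unknot with $tb=-1$. After possibly sliding $2$-handles over one another we may take $H_2(W;\Z)$ to be freely generated by classes $e_1,\dots,e_r$ corresponding to the cores of the $2$-handles capped off by Seifert surfaces pushed into the $0$-handle; the self-intersection $Q_W(e_i,e_i)$ equals the Thurston--Bennequin framing of the attaching unknot, which is the smooth framing $tb-1$. Wait --- I need to be careful about conventions: the $2$-handle framing relative to the Seifert framing of an unknot is exactly $tb(\Lambda_i)$, and here $tb(\Lambda_i)=-1$ for all $i$. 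Hmm, that gives odd diagonal entries, so this naive computation is the wrong bookkeeping; the resolution is that the generators $e_i$ are not the correct basis when the attaching link is nontrivially linked, and one must instead use the linking matrix. So the honest statement is: $Q_W$ is presented (in the obvious basis from the $2$-handles) by the linking matrix of the attaching link, whose $(i,j)$ entry for $i\neq j$ is the linking number $\ell k(\Lambda_i,\Lambda_j)$ and whose $(i,i)$ entry is the $2$-handle framing. For the $c_1=0$ conclusion to be consistent we must have all rotation numbers zero, and by the formula relating the $2$-handle framing to $tb$ and the Seifert framing the diagonal entries are $tb(\Lambda_i)$. That is $-1$, which is odd, contradicting evenness --- so this elementary path does not work and I should not pursue it.

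Therefore I would commit to the topological argument of the first paragraph, phrased carefully. The key steps, in order: (1) note $W$ is an (almost) complex manifold, so $w_2(W)=c_1(\omega)\bmod 2$; (2) invoke Corollary~\ref{cor:L0c1} to get $c_1(\omega)=0$ in $H^2(W;\Z)$, hence $w_2(W)=0$ in $H^2(W;\Z/2)$; (3) recall Wu's formula: for any closed (or, with appropriate care, compact) oriented $4$-manifold, $\langle x\cup x,[W]\rangle \equiv \langle w_2(W)\cup x,[W]\rangle \pmod 2$ for $x\in H^2(W;\Z/2)$ --- but $W$ has boundary, so I must use the version for manifolds with boundary, working with $H_2(W;\Z)$ and its (possibly degenerate) intersection form; the clean statement is that for $x\in H_2(W;\Z)$ one has $Q_W(x,x)\equiv \langle w_2(W),x\rangle \pmod 2$, which holds because the self-intersection of an embedded surface representing $x$ is congruent mod $2$ to its normal Euler number paired against $w_2$; (4) conclude $Q_W(x,x)\equiv 0\pmod 2$ for all $x$, i.e. $Q_W$ is even. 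The main obstacle is step (3): making sure the mod-$2$ characteristic-class identity is correctly invoked for a manifold with boundary, and in particular that ``even'' for the (possibly presymplectic/degenerate) intersection form on $H_2(W)$ means exactly $Q_W(x,x)\in 2\Z$ for all $x$, which is the hypothesis-free statement we actually prove. Everything else is a short deduction from the two previous corollaries.
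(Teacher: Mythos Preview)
Your final argument via $c_1(\omega)=0\Rightarrow w_2(W)=0\Rightarrow Q_W$ even is correct, and indeed the paper records this as one of two routes. However, you abandoned the direct handle computation because of a framing error, and that direct computation is actually the paper's primary (and simpler) argument.

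The mistake is in the sentence ``the $2$-handle framing relative to the Seifert framing of an unknot is exactly $tb(\Lambda_i)$.'' For a \emph{Weinstein} $2$-handle attached along a Legendrian $\Lambda$, the smooth attaching framing is $tb(\Lambda)-1$, not $tb(\Lambda)$. Since Theorem~\ref{thm:L0Weinsteinhandles} gives $tb(\Lambda_i)=-1$ for every $2$-handle, the diagonal entries of the linking matrix are all $-2$, which is even. Because none of the $2$-handles run over the $1$-handles, $H_2(W;\Z)$ is freely generated by the $2$-handle cores capped with Seifert disks, and the intersection form is exactly the linking matrix of the attaching link with these framings on the diagonal. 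An integral symmetric form with all diagonal entries even is even (expand $Q(\sum x_ie_i,\sum x_ie_i)$), so $Q_W$ is even. This is precisely what the paper does in one line.

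Your $c_1$ route is fine, but it carries the extra burden you yourself flagged: justifying the characteristic-element identity $Q_W(x,x)\equiv\langle w_2(W),x\rangle\pmod 2$ for a $4$-manifold with boundary. The handle argument sidesteps this entirely.
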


\begin{proof}
	This follows directly from Theorem~\ref{thm:L0Weinsteinhandles} since $H_2(W)$ is generated by 2-handles attached with framing $-2$ (even). It also follows from Corollary~\ref{cor:L0c1} since the first Chern class is characteristic.
\end{proof}

\subsection{Boundary sum decompositions and $\cL=0$}

In the smooth setting, closed $4$-manifolds with Kirby-Thompson $\cL$-invariant equal to zero satisfy a decomposition theorem: they decompose into connected summands of standard $4$-manifolds ($\cptwo$, $\cptwobar$, $S^1\times S^3$, $S^2\times S^2$). For Weinstein domains, we do not generally have this decomposition into basic summands. The reason for the difference is that up to diffeomorphism, one can always assume the first cut system in a trisection is standard. This is not the case with arc systems. Different arc systems cannot always be related by a diffeomorphism (this is easiest to see by noticing different intersection patterns for the dual curve systems to different arc systems). However, if the initial arc system respects the boundary sum decomposition, we do obtain a decomposition theorem, as we will now prove.

\begin{lemma}\label{l:L0decompose}
	Let $\mathfrak{M}$ be a multisection with divides diagram corresponding to a path in the contact cut graph $CC(\Sigma,d)$ with $\cL(\mathfrak{M})=0$. 
	
	Identify $\Sigma$ as $F\cup \overline{F}$ via a diffeomorphism such that the first contact cut system is the double of an arc system $\mathcal{A}_1$.
	Let $\mathcal{B}$ be the dual curve system to $\mathcal{A}_1$. 
	
	Suppose $\Gamma=\{\gamma_1,\dots, \gamma_d\}$ is a collection of arcs on $F$ which are disjoint from $\mathcal{A}_1\cup \mathcal{B}$. 
	
	Then all curves in $\mathfrak{M}$ are disjoint from $\Gamma$.
\end{lemma}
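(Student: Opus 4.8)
The strategy is to track the curves through the Lefschetz fibration structure supplied by Lemma~\ref{l:L0LF} and show that disjointness from $\Gamma$ is preserved at every stage. Since $\cL(\mathfrak{M})=0$, there is an oriented path $P$ in $CC(\Sigma,d)$ realizing $\mathfrak{M}$ whose edges are all contact type $1$ moves. By Lemma~\ref{l:L0LF}, after identifying the first contact cut system with the double of $\mathcal{A}_1$, all vanishing cycles $V_1,\dots,V_n$ of the associated Lefschetz fibration satisfy
$$V_i \in \left(\prod_{j\in J,\,j<i}\tau_{V_j}\right)(\mathcal{B}),$$
and the ``curves in $\mathfrak{M}$'' are, up to the type~$0$ moves encoded in the path, obtained from the double of $\mathcal{A}_1$ by successively applying Dehn twists along the $T_i$, where each $T_i$ lies in $\mathcal{B}\cup\mathcal{B}'$ (on the $F$ or $\overline F$ side respectively) after the previous twists. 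So it suffices to show that each $T_i$ — equivalently each $V_i$ together with its counterpart on the $\overline F$ side — is disjoint from $\Gamma$, and that the cut system curves $C^i_k$ remain disjoint from $\Gamma$.

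\textbf{Key steps.} First I would record the base case: the first contact cut system is the double $\mathcal{A}_1\cup\overline{\mathcal{A}_1}$, and by hypothesis $\mathcal{A}_1$ is disjoint from $\Gamma$ on the $F$ side (and $\Gamma$ lies entirely in $F$, so $\overline{\mathcal{A}_1}$ is trivially disjoint from it). Next, the inductive step: assume the cut system $C^{i-1}$ and all twisting curves $T_1,\dots,T_{i-1}$ are disjoint from $\Gamma$. The curve $T_i$ lies in $\tau_{T_{i-1}}\circ\cdots\circ\tau_{T_1}(\mathcal{B}\cup\mathcal{B}')$; since $\mathcal{B}$ is disjoint from $\Gamma$ by hypothesis, $\mathcal{B}'$ (a copy in $\overline F$) is automatically disjoint from $\Gamma\subset F$, and each $\tau_{T_k}$ is supported in a neighborhood of $T_k$ which is disjoint from $\Gamma$ by the inductive hypothesis, the composition $\tau_{T_{i-1}}\circ\cdots\circ\tau_{T_1}$ fixes $\Gamma$ pointwise; hence $T_i$, being the image of a curve in $\mathcal{B}\cup\mathcal{B}'$ already disjoint from $\Gamma$, is disjoint from $\Gamma$. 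Then $C^i=\tau_{T_i}(C^{i-1})$ is disjoint from $\Gamma$ since both $T_i$ and $C^{i-1}$ are. Finally, the intermediate vertices along contact type~$0$ subpaths arise from $C^{i-1}$ by contact handleslides whose sliding ribbons are regular neighborhoods of arcs contained in $d=\partial F$; such ribbons are disjoint from the arcs $\gamma_\ell$ (which have endpoints on $d$ but whose interiors can be taken disjoint from a collar of $d$, and in any case the handleslide curves stay in the isotopy class of concatenations of curves already disjoint from $\Gamma$) — so every vertex of $P$, and hence every curve appearing in $\mathfrak{M}$, is disjoint from $\Gamma$. I would phrase the induction so that the statement carried is ``$C^i$ and $T_1,\dots,T_i$ are all disjoint from $\Gamma$.''

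\textbf{Main obstacle.} The crux is the claim that a composition of Dehn twists along curves disjoint from $\Gamma$ fixes $\Gamma$ up to isotopy — this requires being slightly careful because $\Gamma$ consists of \emph{arcs} with endpoints on $\partial F$, not closed curves, so ``disjoint from'' should mean disjoint as subsets with the twisting curves supported in the interior of $F$ away from $\Gamma$; one must verify that the relevant isotopy can be taken rel endpoints and that the twisting supports really can be chosen disjoint from the $\gamma_\ell$'s (not merely disjoint from $\mathcal{A}_1\cup\mathcal{B}$). A second subtlety is making precise which curves constitute ``all curves in $\mathfrak{M}$'': I would clarify that these are the cut-system curves $C^i_k$ for every vertex of the realizing path $P$ (including the type~$0$ intermediate vertices), since the statement is meant to apply to an arbitrary diagram realizing $\mathfrak{M}$ with $\cL=0$. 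Once the bookkeeping of ``disjoint supports $\Rightarrow$ twist fixes $\Gamma$'' is set up cleanly, the induction itself is routine.
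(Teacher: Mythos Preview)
Your proposal is correct and follows essentially the same inductive argument as the paper: show each twisting curve $T_i$ is disjoint from $\Gamma$ because it lies in $\tau_{T_{i-1}}\circ\cdots\circ\tau_{T_1}(\mathcal{B}\cup\mathcal{B}')$, and Dehn twists along curves disjoint from $\Gamma$ preserve disjointness from $\Gamma$; then $C^{i+1}=\tau_{T_i}\circ\cdots\circ\tau_{T_1}(C_1)$ is disjoint from $\Gamma$ as well. The paper's version is slightly leaner in two respects: it does not route through Lemma~\ref{l:L0LF} or the Lefschetz fibration description at all (it argues directly from the definition of contact type~$1$ moves and the characterization of dual curves), and it omits any discussion of type~$0$ intermediate vertices, since $\cL(\mathfrak{M})=0$ means precisely that the realizing path has \emph{no} type~$0$ edges---so your final paragraph about handleslide ribbons is unnecessary.
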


\begin{proof}
Since $\cL(\mathfrak{M})=0$, consecutive contact cut systems $C_i$ and $C_{i+1}$ in the path in $CC(\Sigma,d)$ are related by a right-handed Dehn twist about a curve $T_i$, which is disjoint from the dividing set $d$ and intersects $C_i$ in exactly one point. We will show that each $T_i$ is disjoint from $\Gamma$.

Since $C_1$ is the double of $\mathcal{A}_1$, the only curves which intersect $C_1$ in a single point and are disjoint from $d$ are the curves in $\mathcal{B}$ on the $F$ or $\overline{F}$ side. By assumption these are disjoint from $\Gamma$, so $T_1$ is disjoint from $\Gamma$.

For $i>1$, inductively assume that $T_1,\dots, T_{i-1}$ are disjoint from $\Gamma$. We have $C_i = \tau_{T_{i-1}}\circ \cdots \circ \tau_{T_1}(C_1)$, so the only curves which intersect $C_i$ in a single point and are disjoint from $d$ are the curves in $\tau_{T_{i-1}}\circ \cdots \circ \tau_{T_1}(\mathcal{B}\cup \overline{\mathcal{B}})$. Since $T_1,\dots, T_{i-1}$ and $\mathcal{B}\cup \overline{\mathcal{B}}$ are assumed to be disjoint from $\Gamma$, and Dehn twists are supported in a small neighborhood of the curve we are twisting along, $\tau_{T_{i-1}}\circ \cdots \circ \tau_{T_1}(\mathcal{B}\cup \overline{\mathcal{B}})$ is disjoint from $\Gamma$. Thus $T_i$ is disjoint from $\Gamma$ for all $i$, so $C_{i+1} = \tau_{T_{i}}\circ \cdots \circ \tau_{T_1}(C_1)$ is disjoint from $\Gamma$ for all $i$.

\end{proof}

\begin{figure}
	\centering
	\includegraphics[scale=.2]{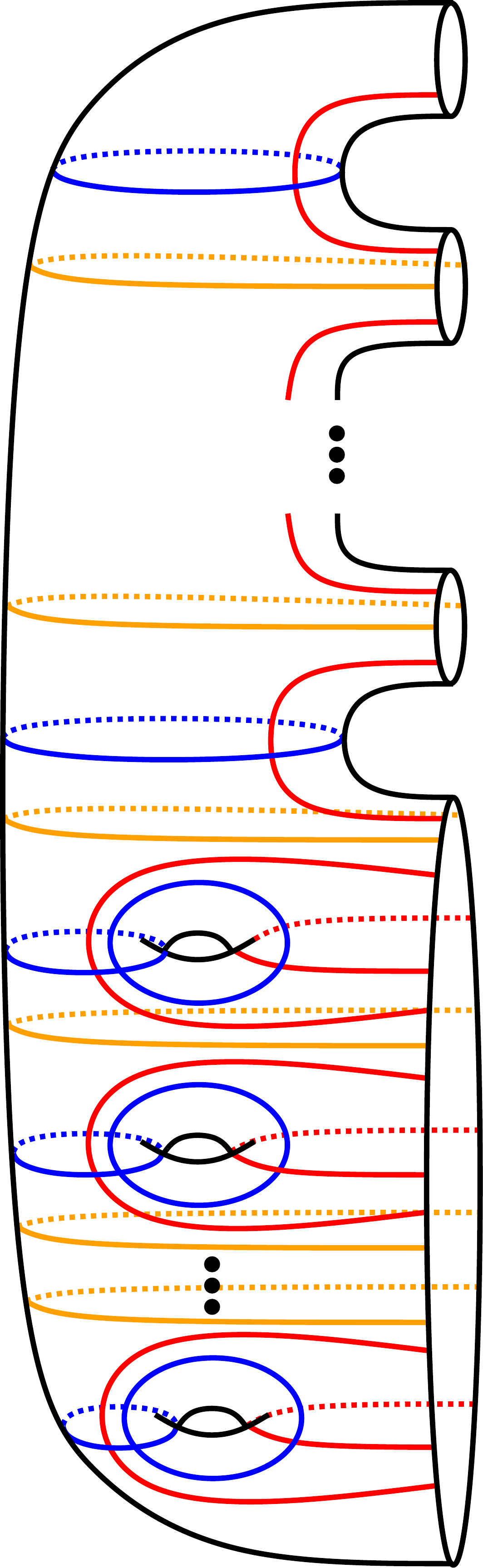}
	\caption{Surface $F$ with red arcs $\mathcal{A}_1=\{a_1^1,\dots, a_1^k\}$, dual blue curves $V_1,\dots, V_k$, and orange separating arcs $\gamma_1,\dots, \gamma_{g+b-2}$.}
	\label{fig:bdrysum}
\end{figure}

\begin{corollary}\label{cor:L0decomp}
	Suppose $\mathfrak{M}$ is a multisection with divides diagram corresponding to a path in the contact cut graph $CC(\Sigma,d)$ with $\cL(\mathfrak{M})=0$, such that the first contact cut system is the double of the standard arc system shown in Figure~\ref{fig:bdrysum}.
	
	Then the Weinstein domain $X(\mathfrak{M})$ is a boundary sum of Weinstein domains supported by multisections with divides where $F_i$ an annulus or genus $1$ surface with a single boundary component.
\end{corollary}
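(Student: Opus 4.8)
The plan is to combine Lemma~\ref{l:L0decompose} with the Lefschetz fibration description of Lemma~\ref{l:L0LF}, and then invoke the fact that a Lefschetz fibration over the disk whose vanishing cycles all avoid a system of separating arcs of the fiber decomposes as a boundary connected sum. Write $F$ for the fiber surface, of genus $g$ with $b$ boundary components. The standard arc system of Figure~\ref{fig:bdrysum} exhibits $F$ as an iterated boundary connected sum $F = F_1 \natural \cdots \natural F_{g+b-1}$ of $g$ copies of a genus $1$ surface with one boundary component and $b-1$ annuli, where the separating arcs $\Gamma = \{\gamma_1,\dots,\gamma_{g+b-2}\}$ are the arcs along which consecutive pieces are glued, and both the arc system $\mathcal{A}_1$ and its dual curve system $\mathcal{B}$ lie in $F_1\sqcup\cdots\sqcup F_{g+b-1}$ (each dual curve $B_i$ sitting in the piece containing the arc $a_1^i$). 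In particular $\Gamma$ is disjoint from $\mathcal{A}_1\cup\mathcal{B}$, so Lemma~\ref{l:L0decompose} applies.

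First I would record that, by Lemma~\ref{l:L0decompose}, every curve appearing in $\mathfrak{M}$ -- in particular each twisting curve $T_i$ relating consecutive contact cut systems -- is disjoint from $\Gamma$. By Lemma~\ref{l:L0LF}, $X(\mathfrak{M}) = W(P)$ is supported by a Lefschetz fibration with fiber $F$ and vanishing cycles $V_1,\dots,V_n$ with $V_i \in (\prod_{j\in J,\, j<i}\tau_{V_j})(\mathcal{B})$; since each $T_i$ is disjoint from $\Gamma$ and Dehn twists are supported in a neighborhood of their curves, a straightforward induction on $i$ shows every $V_i$ is disjoint from $\Gamma$. (Equivalently, the Hurwitz reduction carried out in the proof of Theorem~\ref{thm:L0Weinsteinhandles} replaces this Lefschetz fibration by an equivalent one whose vanishing cycles are literally elements of $\mathcal{B}$, which visibly lie in the pieces $F_s$.) Hence each $V_i$, being a simple closed curve in $F$ disjoint from all the separating arcs, is contained in exactly one piece $F_{s(i)}$.

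Next I would partition the vanishing cycles according to which piece they lie in: for each $s = 1,\dots,g+b-1$, let $\pi_s : W_s \to D^2$ be the Lefschetz fibration with fiber $F_s$ and vanishing cycles $\{V_i : s(i) = s\}$ (if no vanishing cycle lies in $F_s$, take $W_s = F_s\times D^2$). Each such vanishing cycle is the image of a dual curve, hence homologically essential in $F_s$, so $\pi_s$ is allowable. Because $F = F_1\natural\cdots\natural F_{g+b-1}$ we have $F\times D^2 = (F_1\times D^2)\natural\cdots\natural(F_{g+b-1}\times D^2)$ (the boundary sums taking place along the $\gamma_s\times D^2\cong D^3$), and since the $2$-handle attached along each $V_i$ can be taken to be supported inside $F_{s(i)}\times D^2$, the Weinstein domain $W(P)$ is the boundary connected sum $W_1\natural\cdots\natural W_{g+b-1}$. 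Finally, applying the construction of the multisection with divides compatible with a Lefschetz fibration (or Theorem~\ref{thm:AlfsArePaths}) to each $\pi_s$ with core surface $(\Sigma_s,d_s) = (F_s\cup_{\partial F_s}\overline{F_s},\partial F_s)$ shows that $W_s$ is supported by a multisection with divides on $(\Sigma_s,d_s)$, where $\Sigma_s$ is a genus $1$ surface divided by $d_s$ into two annuli when $F_s$ is an annulus, and a genus $2$ surface divided into two genus $1$ one-boundary surfaces when $F_s$ is a genus $1$ one-boundary surface. This yields the asserted boundary sum decomposition.

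The step I expect to be the main obstacle is the identification $W(P) = W_1\natural\cdots\natural W_{g+b-1}$: one must verify that cutting a Lefschetz fibration over $D^2$ along an arc of the fiber disjoint from every vanishing cycle corresponds, at the level of the supported Weinstein domain, to a boundary connected sum -- a Murasugi-sum type statement for Lefschetz fibrations over the disk. This is morally standard, but the cleanest justification is via the handle picture above, tracking the Liouville form through the boundary sum of the fiber, and some care is needed to confirm that the Weinstein (and not merely the smooth) structures split compatibly. Everything else is bookkeeping once Lemma~\ref{l:L0decompose} is in hand.
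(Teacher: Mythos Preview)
The paper states Corollary~\ref{cor:L0decomp} without proof, treating it as an immediate consequence of Lemma~\ref{l:L0decompose} applied with the separating arcs $\Gamma=\{\gamma_1,\dots,\gamma_{g+b-2}\}$ of Figure~\ref{fig:bdrysum}. Your proposal supplies exactly the intended argument: verify $\Gamma$ is disjoint from $\mathcal{A}_1\cup\mathcal{B}$, invoke Lemma~\ref{l:L0decompose} to conclude all twisting curves (and hence all vanishing cycles, after the Hurwitz reduction of Theorem~\ref{thm:L0Weinsteinhandles}) avoid $\Gamma$, and then split the Lefschetz fibration along the resulting boundary-sum decomposition of $F$.

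One small remark: a slightly more direct route, avoiding the pass through Lemma~\ref{l:L0LF}, is to note that Lemma~\ref{l:L0decompose} (applied symmetrically on the $\overline{F}$ side as well) shows every contact cut system $C^i$ in the path is disjoint from the closed separating curves $\gamma_j\cup\overline{\gamma_j}\subset\Sigma$, so the entire multisection with divides \emph{diagram} restricts to each piece $\Sigma_s=F_s\cup_{\partial F_s}\overline{F_s}$. This yields the boundary sum at the level of multisections with divides directly, without explicitly translating to vanishing cycles. Your Lefschetz-fibration phrasing is equivalent and equally valid; the caution you flag about the Weinstein (not just smooth) boundary sum is handled by the fact that the Weinstein $1$-handles of $F\times D^2$ corresponding to arcs crossing $\gamma_j$ have their cancelling $2$-handles among the $V_i$ only on one side, so the Weinstein structure splits along the $3$-balls $\gamma_j\times D^2$.
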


\section{Multisections with divides with arbitrarily large Weinstein $\mathcal{L}$-invariant}
\label{ss:largeL}

Now we will consider the other extreme--examples where the Weinstein $\cL$-invariant grows arbitrarily large. We will give two families of examples of multisections with divides (coming from Lefschetz fibrations) where $\cL_{(\Sigma_n,d)}(\mathfrak{M}_n)$ grows arbitrarily large. We suspect that the Weinstein $\cL$-invariants of the underlying Weinstein domains also grow arbitrarily large and make some remarks about strategies towards proving this at the end of this section.

\begin{examples} \label{ex:large1}
	Let $F_n$ be a planar surface with $2n+1$ boundary components. We will label the boundary components $\{0,1,\dots, 2n\}$, and draw $F_n$ so that $0$ is the ``outer'' boundary component, and the other boundary components are the boundaries of holes, ordered left to right as in Figure~\ref{fig:largeLF}. Consider the Lefschetz fibration $LF_n$ with fiber $F_n$ and vanishing cycles $(a_1,b_1,c_1,\dots, a_n,b_n,c_n)$, where $a_i$ is boundary parallel to boundary component $2i-1$, $b_i$ is boundary parallel to boundary component $2i$, and $c_i$ is a convex simple closed curve enclosing the $2i-1$ and $2i$ holes as in Figure~\ref{fig:largeLF}. All of the vanishing cycles are disjoint, so their order does not matter (i.e. Hurwitz moves do not change the set of vanishing cycle curves).
	
	\begin{figure}
		\centering
		\includegraphics[scale=.2]{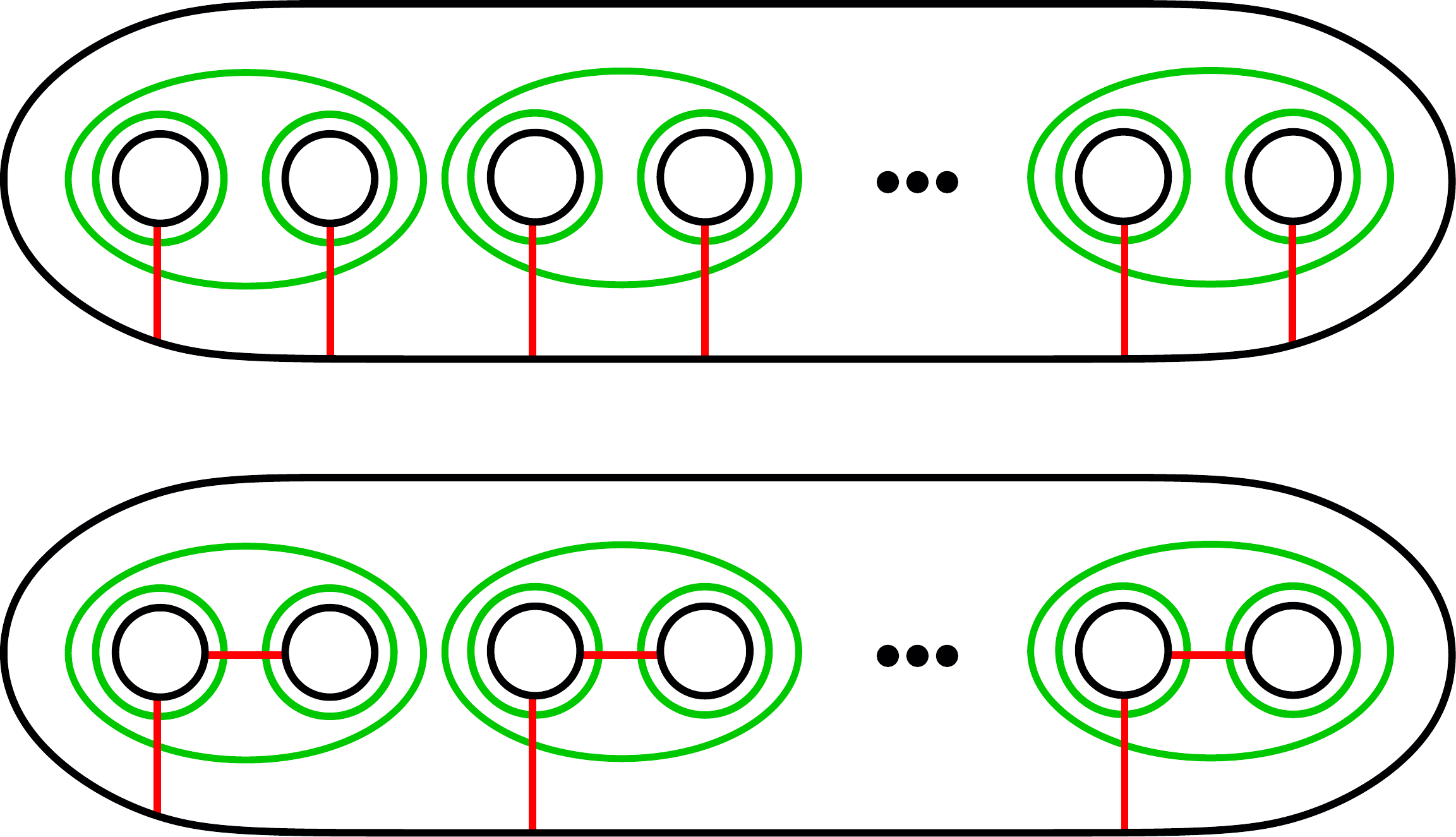}
		\caption{Fiber and vanishing cycles for the Lefschetz fibration of Example~\ref{ex:large1}, together with two arc systems, related through $n$ arc slides.}
		\label{fig:largeLF}
	\end{figure}
	
	Let $\Sigma_n=F_n\cup_{\partial F_n} \overline{F_n}$ and $d_n=\partial F_n$. Let $\mathfrak{M}_n$ be the multisection with divides compatible with $LF_n$.
	
	The Weinstein domain supported by $LF_n$ is a boundary sum of disk bundles over $S^2$ with Euler number $-3$. It has a Weinstein presentation  consisting of $n$ split Legendrian unknots which have each been positively stabilized once from the standard max $tb$ unknot as in Figure~\ref{fig:many_stabilized_unknot}.
\end{examples}

\begin{figure}
	\centering
	\includegraphics[scale=.2]{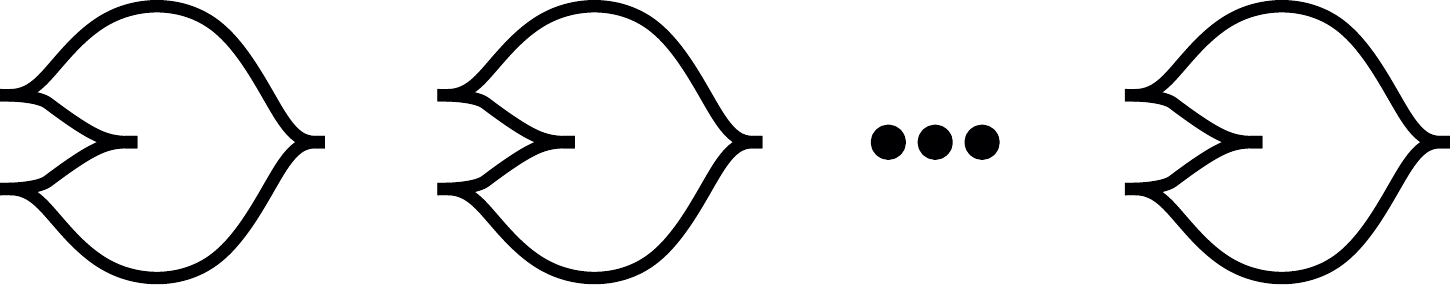}
	\caption{A Weinstein presentation for the Weinstein domain of Example~\ref{ex:large1}.}
	\label{fig:many_stabilized_unknot}
\end{figure}

\begin{proposition}\label{prop:arblargeL1}
	$\cL_{(\Sigma_n,d_n)}(\mathfrak{M}_n)=n$.
\end{proposition}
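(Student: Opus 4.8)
The plan is to establish the two inequalities $\cL_{(\Sigma_n,d_n)}(\mathfrak{M}_n)\le n$ and $\cL_{(\Sigma_n,d_n)}(\mathfrak{M}_n)\ge n$ separately. For the upper bound, I will exhibit an explicit oriented path $P$ in $CC(\Sigma_n,d_n)$ with $\mathfrak{M}(P)=\mathfrak{M}_n$ and $N_0(P)=n$. Start with the contact cut system that is the double of the first arc system drawn in Figure~\ref{fig:largeLF}. The vanishing cycles $a_i$ and $b_i$ are boundary-parallel, hence each is dual to one arc of a suitable arc system, so the Dehn twists about the $a_i$'s and $b_i$'s can each be realized by a single contact type $1$ move without any type $0$ moves (these are all ``visible'' curves simultaneously present as dual curves, since they are pairwise disjoint and each hits exactly one arc). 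The curves $c_i$ are also disjoint from all the $a_j,b_j$ but are \emph{not} simultaneously dual to the chosen arc system; one arc slide (shown as the transition between the two arc systems in Figure~\ref{fig:largeLF}) reconfigures the arc system so that $c_i$ becomes a dual curve, contributing one type $0$ move per index $i$. Carefully bookkeeping: we perform the $2n$ type $1$ moves for the $a_i,b_i$ at the start (no type $0$ needed), then for each $i=1,\dots,n$ do one arc slide (type $0$) followed by one type $1$ move for $c_i$, and finally perform type $0$ moves to return to the starting vertex if needed for the diagram — but since we only need $\mathfrak{M}(P)=\mathfrak{M}_n$ and not a loop, the $n$ arc slides are the only type $0$ moves, giving $N_0(P)=n$.

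For the lower bound, which is the main obstacle, I need to show that \emph{every} oriented path $P$ in $CC(\Sigma_n,d_n)$ with $\mathfrak{M}(P)=\mathfrak{M}_n$ has at least $n$ type $0$ moves. The strategy is to argue by contradiction: suppose $N_0(P)<n$. A path with $N_0(P)=0$ (all type $1$) produces, by Lemma~\ref{l:L0LF}, vanishing cycles all lying in $\prod_{j\in J, j<i}\tau_{V_j}(\mathcal{B})$ for a \emph{single fixed} dual curve system $\mathcal{B}$ of a \emph{single fixed} arc system $\mathcal{A}$ on $F_n$; each type $0$ move allows one change of the underlying arc system. The key geometric input is that the three curves $a_i,b_i,c_i$ cannot all be dual curves to any \emph{single} arc system of the planar surface $F_n$: indeed $a_i$ and $b_i$ are boundary-parallel to distinct holes, and $c_i$ encircles both, so in any polygonal presentation $c_i$ would have to be parallel to $a_i b_i$ in a way incompatible with being a straight dual segment — more precisely, the subsurface filled by $a_i\cup b_i\cup c_i$ (a pair of pants) forces $c_i$ to intersect the arc separating the two holes, while $a_i,b_i$ do not. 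I would make this precise using the dual-curve intersection-pattern discussion in Section~\ref{s:L0}: each index $i$ requires its own ``reconfiguration,'' and each reconfiguration costs at least one type $0$ move. Since the $n$ pairs-of-pants regions around $\{2i-1,2i\}$ are disjoint, the $n$ required reconfigurations are independent, forcing $N_0(P)\ge n$.

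To organize the lower-bound argument cleanly, I would isolate the following lemma: if a path $P$ with $\mathfrak{M}(P)=\mathfrak{M}_n$ uses type $0$ moves only at steps in some set $S\subseteq\{1,\dots,|P|\}$, then for each $i\in\{1,\dots,n\}$ there is some $s\in S$ whose associated arc slide is ``essential for index $i$'' in the sense that, restricting attention to the pair-of-pants neighborhood $N_i$ of holes $2i-1,2i$, the arc systems before and after $s$ differ; and the map $i\mapsto$ (such an $s$) can be taken injective by disjointness of the $N_i$. The heart of this is a local-to-global argument: the vanishing cycles of $\mathfrak{M}_n$ restricted to $N_i$ are exactly $(a_i,b_i,c_i)$, and Lemma~\ref{l:L0LF} applied ``within $N_i$'' shows no single arc system dual system realizes all three as dual curves, so the arc system on $N_i$ must change at least once along $P$. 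Summing the injective correspondence over $i=1,\dots,n$ yields $|S|=N_0(P)\ge n$, completing the proof.
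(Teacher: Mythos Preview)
Your upper bound construction is correct and matches the paper's. The lower bound argument, however, has a genuine gap.

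The main problem is the injectivity claim at the end: you assert that the map $i\mapsto s$ (sending each index to a type $0$ move that ``reconfigures'' the arc system on the pair-of-pants $N_i$) can be taken injective ``by disjointness of the $N_i$,'' but you do not prove this. An arc slide is a global operation on $F_n$: the arc being slid need not be contained in any single $N_i$, and a single slide can certainly change how an arc system meets several of the $N_i$'s at once. Similarly, your appeal to Lemma~\ref{l:L0LF} ``within $N_i$'' is not justified---the arc system on $F_n$ does not restrict to an arc system on $N_i$, and the dual-curve formalism of that lemma does not obviously localize.

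The paper's proof avoids these difficulties by finding a discrete invariant that is genuinely unchanged by contact type~$1$ moves: the pattern of which boundary components of $F_n$ are connected by arcs (equivalently, since $F_n$ is planar, the tree whose vertices are boundary components and whose edges are the arcs). A Dehn twist about any curve in the interior of $F_n$ fixes the arc endpoints, so type~$1$ moves leave this tree unchanged; an arc slide moves exactly one endpoint, hence changes the tree by a single edge move. One then checks directly what tree structure is forced at the moment of each type~$1$ move: for $a_i$ the vertex $2i-1$ must be a leaf, for $b_i$ the vertex $2i$ must be a leaf, and for $c_i$ there must be an edge joining $2i-1$ to $2i$ together with exactly one further edge meeting $\{2i-1,2i\}$. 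These constraints are incompatible for a single tree, so for each $i$ a specific arc slide involving the $(2i-1)$--$(2i)$ edge is forced between the $c_i$ move and one of $a_i,b_i$. Because that arc has both endpoints on $\{2i-1,2i\}$, such slides for different $i$ are necessarily distinct.

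Your intuition that ``the three curves $a_i,b_i,c_i$ cannot all be dual to a single arc system'' is correct and is essentially the same incompatibility, but to turn it into a counting argument you need an invariant that is honestly fixed by type~$1$ moves and changes in a controlled, \emph{index-localized} way under type~$0$ moves. The endpoint-connectivity tree is exactly such an invariant; your pair-of-pants restriction is not.
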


\begin{proof}
	We will show that any path in the contact cut graph $CC(\Sigma,d)$ with type $1$ moves realizing the vanishing cycles $\{a_1,b_1,c_1,\dots, a_n,b_n,c_n\}$, necessarily contains at least $n$ type $0$ moves. Note that since the vanishing cycles are disjoint, their order does not matter (Hurwitz moves do not change the curves).
	
	Each vertex in $CC(\Sigma,d)$ gives an arc system $\mathcal{A}^+$ for $F_n$ and another arc system $\mathcal{A}^-$ for $\overline{F_n}$. Each arc system consists of $2n$ arcs. Note that the endpoints of $\mathcal{A}^+$ are the same as those of $\mathcal{A}^-$. Because $F_n$ is planar, each arc in $\mathcal{A}^\pm$ has endpoints on two distinct boundary components, and each boundary component contains at least one endpoint of an arc.
	
	To perform a contact type $1$ move which corresponds to a Dehn twist about a curve $V\subset F_n$, the arc system $\mathcal{A}^+$ must be dual to $V$, meaning a single arc intersects $V$ in one point and the other arcs are disjoint. (Similarly for $\mathcal{A}^-$ if we perform the Dehn twist in $\overline{F_n}$.) Observe that if $V=a_i$, the arc systems $\mathcal{A}^\pm$ corresponding to the vertex before that type $1$ move must have exactly one arc with an endpoint on the $2i-1$ boundary component. Similarly if $V=b_i$, the preceding arc systems must have exactly one arc with an endpoint on the $2i$ boundary component. On the other hand, if $V=c_i$, the preceding arc systems must have exactly one arc which connects the $2i-1$ and $2i$ boundary components, and exactly one other arc which connects either the $2i-1$ or $2i$ boundary component to a different boundary component. (This follows from the facts above about arc systems for $F$ and the fact that $c_i$ separates the $2i-1$ and $2i$ boundary components from the others.)
	
	Observe that contact type $1$ moves do not change the endpoints of the arcs in the arc systems, so in particular they do not change which boundary components are connected by the arcs. Type $0$ moves can change this since they correspond to arc slides performed on both sides.

	Thus (regardless of the ordering of the vanishing cycles), there must be at least one type $0$ move in between the $a_i$ and $c_i$ type $1$ moves \emph{or} in between the $b_i$ and $c_i$ type $1$ moves. The type $0$ move must correspond to an arc slide exchanging an arc with one endpoint on boundary $2i$ and the other on $2i+1$ with an arc with exactly one of its endpoints on $2i$ or $2i+1$. Thus for each $i=1,\dots, n$, we must have at least one type $0$ move, so $\mathcal{L}_{(\Sigma_n,d_n)}(\mathfrak{M}_n)\geq n$.
	
	For the upper bound, we can find a path in $CC(\Sigma_n,d_n)$ starting with the arc system where there is a unique arc with an endpoint on boundary components $1,\dots, 2n$, and each of these arcs has their other end point on boundary component $0$ (see top of Figure~\ref{fig:largeLF}). We can then perform contact type $1$ moves which Dehn twist about $a_1,b_1,\dots, a_n,b_n$. Next we perform $n$ type $0$ moves, corresponding to the arc slides where the arc with endpoint on boundary component $2i-1$ slides over the arc with endpoint on boundary component $2i$. This provides an arc system (bottom of Figure~\ref{fig:largeLF}) where we can then perform contact type $1$ moves which Dehn twist about $c_1,\dots, c_n$.
\end{proof}

\begin{remark}
	The underlying Weinstein domains $(W_n,\omega_n,V_n)$ in these examples have non-vanishing $c_1(\omega)$ (this can be computed from the Weinstein Kirby diagram of figure~\ref{fig:many_stabilized_unknot}). Therefore by Corollary~\ref{cor:L0c1}, they cannot have Weinstein $\cL$-invariant equal to zero. However, to prove that $\cL(W_n,\omega_n,V_n)=n$ would require more sophisticated lower bounds. See questions~\ref{q:chern}, \ref{q:int}.
\end{remark}

\begin{examples} \label{ex:large2}
	Let $F_n$ be a planar surface with $n+1$ boundary components labeled $\{0,1,\dots, n\}$ for $n>1$. Let $V_0,V_1,\dots, V_n$ denote curves which are parallel to the boundary components. Let $LF_n$ denote the Lefschetz fibration with fiber $F_n$ and vanishing cycles $V_0,V_1,\dots, V_n$. The underlying Weinstein domain is a disk bundle over $S^2$ with Euler number $-(n+1)$. Let $\mathfrak{M}_n$ denote the multisection with divides compatible with $LF_n$, and $\Sigma_n=F_n\cup_{\partial F_n} \overline{F_n}$ and $d_n=\partial F_n$.
\end{examples}

\begin{figure}
	\centering
	\includegraphics[scale=.2]{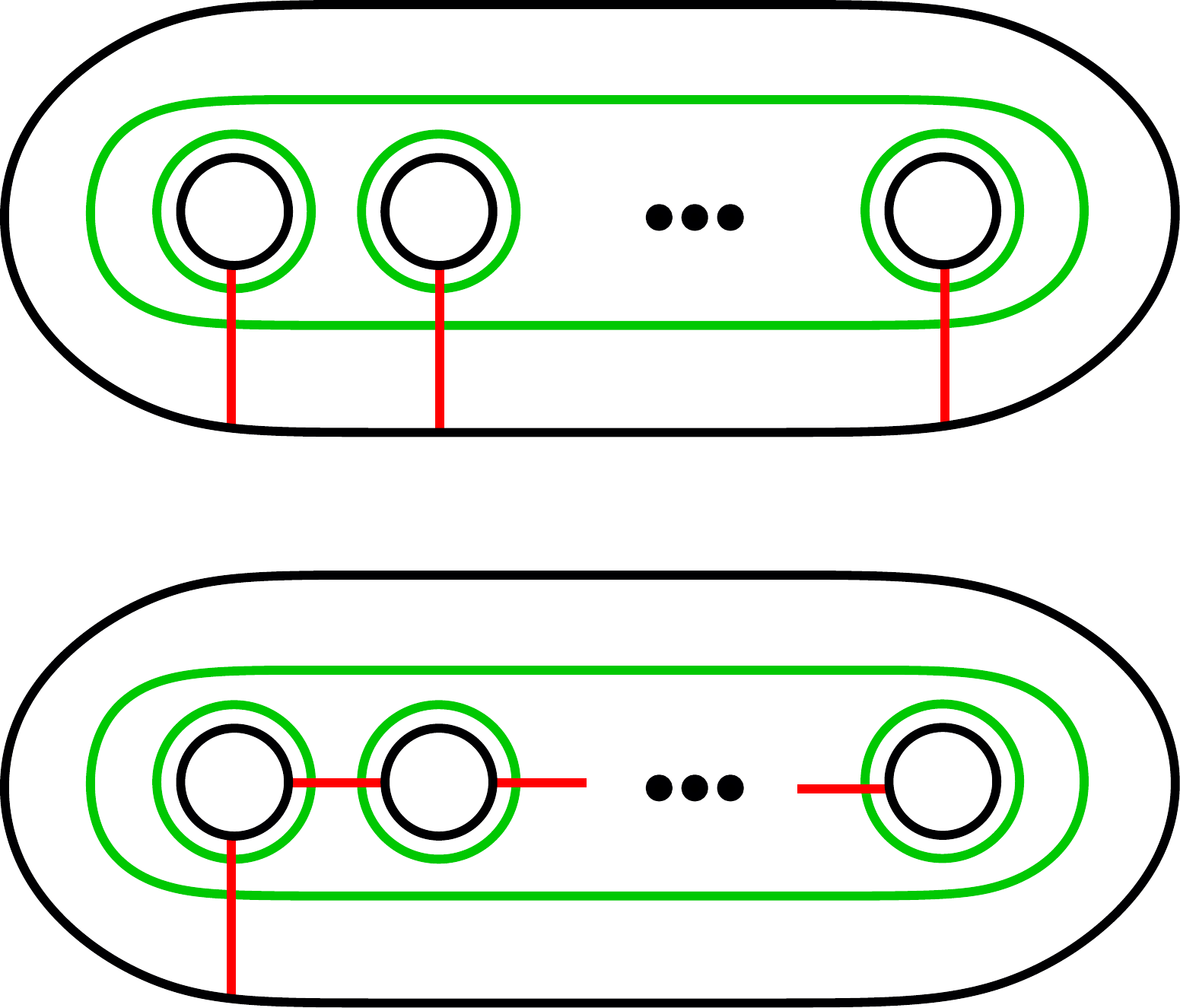}
	\caption{Fiber and vanishing cycles for the Lefschetz fibration of Example~\ref{ex:large2}, together with two arc systems, related through $n$ arc slides.}
	\label{fig:eg2}
\end{figure}

\begin{figure}
	\centering
	\includegraphics[scale=.2]{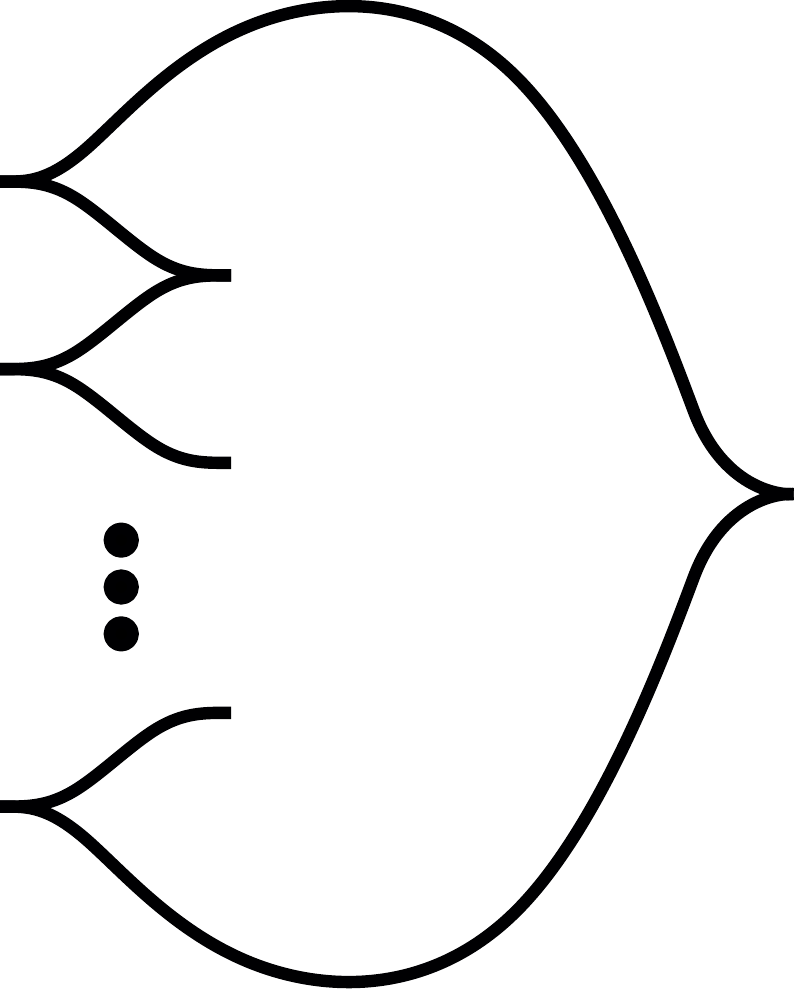}
	\caption{A Weinstein presentation for the Weinstein domain of Example~\ref{ex:large2}.}
	\label{fig:very_stabilized_unknot}
\end{figure}

\begin{proposition}\label{prop:arblargeL2}
	$\cL_{(\Sigma_n,d_n)}(\mathfrak{M}_n)=n-1$.
\end{proposition}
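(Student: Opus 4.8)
The plan is to mirror the structure of the proof of Proposition~\ref{prop:arblargeL1}: establish the upper bound $\cL_{(\Sigma_n,d_n)}(\mathfrak{M}_n)\le n-1$ by exhibiting an explicit path, and the lower bound $\cL_{(\Sigma_n,d_n)}(\mathfrak{M}_n)\ge n-1$ by a combinatorial argument about how arc systems on the planar surface $F_n$ can be dual to the boundary-parallel curves $V_0,\dots,V_n$. Note first that since $V_0,\dots,V_n$ are all boundary parallel and mutually disjoint, Hurwitz moves do not change the set of vanishing cycle curves, so we may ignore their ordering throughout.

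For the upper bound, I would start the path with the ``radial'' arc system on $F_n$ in which there is a unique arc $e_i$ running from boundary component $i$ to boundary component $0$, for $i=1,\dots,n$ (top of Figure~\ref{fig:eg2}). With respect to this arc system the dual curve to $e_i$ is exactly $V_i$, so we may perform contact type~$1$ moves realizing Dehn twists about $V_1,\dots,V_n$ with no type~$0$ moves; this accounts for $n$ of the $n+1$ vanishing cycles. To realize the remaining vanishing cycle $V_0$ (parallel to the outer boundary), note $V_0$ is isotopic to the boundary curve enclosing all of $1,\dots,n$, and in a planar surface $V_0\simeq V_1\cdot\ldots\cdot V_n$ up to isotopy is \emph{not} true, but after $n-1$ arc slides (sliding $e_2$ over $e_1$, then the result over $e_3$, etc., as indicated at the bottom of Figure~\ref{fig:eg2}) one reaches an arc system possessing an arc dual to $V_0$; performing that last type~$1$ move completes a path whose only type~$0$ moves are these $n-1$ arc slides. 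Hence $\cL_{(\Sigma_n,d_n)}(\mathfrak{M}_n)\le n-1$.

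For the lower bound, consider any path in $CC(\Sigma_n,d_n)$ built from type~$0$ and type~$1$ moves whose type~$1$ moves realize Dehn twists about the curves $V_0,V_1,\dots,V_n$ (in some order, possibly on either the $F_n$ or $\overline{F_n}$ side). As in the previous proof, each vertex gives arc systems $\mathcal{A}^\pm$ on $F_n$ and $\overline{F_n}$ with common endpoints; since $F_n$ is planar, each arc joins two distinct boundary components and each boundary component carries at least one endpoint. To perform a type~$1$ move twisting about $V_j$, the relevant arc system must have \emph{exactly one} arc with an endpoint on boundary component $j$ and that arc must otherwise be disjoint from $V_j$ — in other words boundary component $j$ must be ``simple'' (meet only one arc end, hence be connected by a single arc to exactly one other boundary component) at that moment. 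Type~$1$ moves do not change which boundary components are joined by arcs (they preserve endpoints), while a type~$0$ move changes the ``connection pattern'' of boundary components by at most one arc slide. The combinatorial heart is then: in an arc system for a planar surface with $n+1$ boundary components (so $n$ arcs, forming a tree on the boundary components after contracting), at most how many boundary components can simultaneously be ``simple''? Counting: a tree on $n+1$ nodes with $n$ edges has at least two leaves, and a ``simple'' boundary component corresponds roughly to a leaf; but we need all of $V_0,\dots,V_n$ to become available, and each can only be twisted when its boundary component is a leaf of the current tree, and arc slides change the tree one edge at a time. I would make this precise by assigning to each configuration the rooted-tree/connection data and tracking a potential function — e.g. the number of boundary components not yet ``processed'' that fail to be leaves — showing it can decrease by at most one per type~$0$ move, and that it must go from roughly $n-1$ down to $0$. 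This yields at least $n-1$ type~$0$ moves.

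The main obstacle will be making the lower-bound combinatorics rigorous: precisely formalizing the ``connection pattern'' invariant (a forest/tree on the boundary components induced by an arc system), proving that a type~$1$ move about $V_j$ forces boundary component $j$ to be a leaf of the current tree disjoint from the twisting region, and verifying that a single arc slide alters this tree by exactly one edge so that the relevant potential drops by at most one. I expect the bookkeeping — especially handling the possibility that twists are performed on the $\overline{F_n}$ side and that the two arc systems $\mathcal{A}^+,\mathcal{A}^-$ can temporarily have different connection patterns — to require care, exactly as in the proof of Proposition~\ref{prop:arblargeL1}, which is the template to follow.
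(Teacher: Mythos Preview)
Your upper bound construction is correct and matches the paper's. The lower bound, however, has a genuine gap: the potential you propose---the number of unprocessed boundary components that fail to be leaves---does not deliver the bound $n-1$. Starting from the radial arc system, only boundary $0$ is a non-leaf, so your potential begins at $1$, not ``roughly $n-1$''; yet $n-1$ type~$0$ moves are still required, since each one lowers the degree of boundary $0$ by at most one and that degree must drop from $n$ down to $1$. A potential that records only \emph{which} vertices are leaves cannot see this; you would need to track degrees (equivalently, endpoint multiplicities).

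The paper's argument is simpler and avoids the tree formalism entirely. An arc system on $F_n$ consists of $n$ arcs, hence $2n$ endpoints distributed among the $n+1$ boundary components, each receiving at least one; thus there are exactly $n-1$ ``extra'' endpoints. A type~$1$ move about $V_j$ requires boundary component $j$ to carry exactly one endpoint at that moment, so every extra endpoint initially on some boundary $j$ must have been displaced before the $V_j$ move occurs. Since each type~$0$ move displaces exactly one endpoint, at least $n-1$ type~$0$ moves are forced. Note that this endpoint distribution is determined by where the contact cut system meets $d$, so it is identical for $\mathcal{A}^+$ and $\mathcal{A}^-$; and type~$1$ moves on either side leave endpoints fixed. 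Hence your concern about the two sides acquiring different connection patterns does not arise in this argument.
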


\begin{proof}
	We proceed similarly to the proof of Proposition~\ref{prop:arblargeL1}. In this case, before performing a contact type $1$ move corresponding to vanishing cycle $V_i$, the arc systems for $F_n$ and $\overline{F_n}$ must have a unique arc with an end point on the $i^{th}$ boundary component. Each arc system for $F_n$ has $n$ arcs, with a total of $2n$ endpoints. Each boundary component of $F_n$ contains at least one of these endpoints. This leaves $n-1$ ``extra'' endpoints to distribute among the $n+1$ boundary components. To perform the $V_i$ contact type $1$ move, we must first move any the ``extra'' endpoints of arcs off of the $i^{th}$ boundary component using type $0$ moves. Each type $0$ move, can move exactly one endpoint from one boundary component to another. Thus, to perform all $n+1$ type $1$ moves needed in $\mathfrak{M}_n$, we must perform at least $n-1$ type $0$ moves, since each ``extra'' endpoint must move at least once.
	
	To realize the path with exactly $n-1$ type $0$ moves, start with the double of an arc system where the $i^{th}$ arc connects boundary component $0$ to boundary component $i$ (top of Figure~\ref{fig:eg2}). Perform contact type $1$ moves on $V_1,\dots, V_n$. Then perform $n-1$ type $0$ moves corresponding to arc slides which lead to an arc system with arcs connecting boundary component $j-1$ with boundary component $j$ for $j=1,\dots, n$ (bottom of Figure~\ref{fig:eg2}). Finally, we can perform the contact type $1$ move for $V_0$.
\end{proof}

\begin{remark}
	In these examples, the Chern class is again non-vanishing when $n>1$ so $\cL\neq 0$. However, more work is needed to show that the underlying Weinstein domains have arbitrarily large Weinstein $\cL$-invariant. We leave this as an open conjecture.
\end{remark}

\begin{conjecture}
	The Weinstein domains in Example~\ref{ex:large1} and~\ref{ex:large2} have arbitrarily large Weinstein $\cL$-invariants, minimized over all possible supporting multisections with divides.
\end{conjecture}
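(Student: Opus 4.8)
The plan is to prove a structural refinement of Theorem~\ref{thm:L0Weinsteinhandles} that interpolates between $\cL = 0$ and general $\cL$, and then extract from it a numerical invariant of the Weinstein domain that grows without bound in each family while remaining controlled by $\cL$. Concretely, suppose $\mathfrak{M}$ is a multisection with divides supporting $(W,\omega,V)$ with $\cL_{(\Sigma,d)}(\mathfrak{M}) = k$, realized by a path $P$ in $CC(\Sigma,d)$ with exactly $k$ contact type $0$ moves. Grouping the maximal runs of contact type $1$ moves, $P$ decomposes into at most $k+1$ blocks. Running the argument of Lemma~\ref{l:L0LF} inside each block (with the arc system fixed at the start of that block) together with the argument of Lemma~\ref{l:LFMDdiagrams} across blocks should yield: $W$ is supported by a Lefschetz fibration with fiber $F = \Sigma^+$ whose vanishing cycles split into $\le k+1$ consecutive blocks $B_1, \dots, B_{k+1}$, where the curves in $B_j$ are dual curves of a fixed arc system $\mathcal{A}_j$ on $F$, and $\mathcal{A}_j$ and $\mathcal{A}_{j+1}$ differ by the arc slides encoded in the type $0$ moves between the corresponding runs. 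This is the $\cL \le k$ analogue of the ``all vanishing cycles are dual curves'' conclusion in the $\cL = 0$ case, and its proof is a bookkeeping exercise in the tools already developed.

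Next I would feed this block structure into the Casals--Murphy construction (as in the proofs of Proposition~\ref{p:L0annulus} and Theorem~\ref{thm:L0Weinsteinhandles}) to produce a Weinstein handle decomposition. Within a single block every vanishing cycle lifts to a Legendrian front that is a cusp-free arc through the $1$-handles, i.e.\ a $tb = -1$, $r = 0$ Legendrian, and handleslides among such circles preserve $r = 0$. The only source of complexity is the transition between blocks: re-expressing the curves of $B_{j+1}, \dots, B_{k+1}$ relative to the global arc system $\mathcal{A}_1$ introduces at most a bounded number of cusps per arc slide into each later front. The goal is to show that, after simplification, $W$ admits a Weinstein presentation in which all $2$-handles are attached along Legendrians whose rotation numbers, and whose deviation from a chain of $tb = -1$ unknots, are bounded by an explicit (ideally linear) function $f(k)$. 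Granting this, $c_1(\omega)$ evaluated on any homology class, the divisibility of $c_1(\omega)$, and the number of odd diagonal entries in a diagonalized intersection form would all be $\le f(k)$.

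The conjecture then follows by computing these invariants for the two families. For Example~\ref{ex:large2}, $W_n'$ is the disk bundle over $S^2$ of Euler number $-(n+1)$, so $H_2$ is generated by a class $A$ with $A \cdot A = -(n+1)$ and $|\langle c_1(\omega), A\rangle| = n-1$ (read off from Figure~\ref{fig:very_stabilized_unknot}); both quantities are unbounded, forcing $\cL(W_n') \to \infty$. For Example~\ref{ex:large1}, $W_n$ is a boundary sum of $n$ disk bundles over $S^2$ of Euler number $-3$, so its intersection form has $n$ odd diagonal entries and $c_1(\omega)$ is nonzero on each of the $n$ sphere generators; again the relevant count grows linearly in $n$, so $\cL(W_n) \to \infty$.

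The hard part is the quantitative estimate in the second step: an inter-block arc slide acts simultaneously on all vanishing cycles of all later blocks, and on a high-genus fiber a single arc slide can interact with many of these curves at once, so it is not obvious that the cusp count (equivalently, the rotation-number growth) per arc slide is bounded independently of the number of $2$-handles. Making this accounting work---or replacing $c_1$ and the intersection form by an invariant that is manifestly monotone under arc slides---is the crux. A parallel, more combinatorial route would be to generalize the endpoint and boundary-component bookkeeping of Propositions~\ref{prop:arblargeL1} and~\ref{prop:arblargeL2} from the specific $(\Sigma_n, d_n)$ to an arbitrary $(\Sigma, d)$ supporting these domains; the obstacle there is the absence of a canonical arc system on a higher-genus surface and the combinatorial explosion of possible dual-curve intersection patterns, which is precisely why we state this as a conjecture rather than a theorem.
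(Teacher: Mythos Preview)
This statement is a \emph{conjecture} in the paper, not a theorem; the paper gives no proof and explicitly leaves it open, establishing only $\cL\neq 0$ for these families via Corollary~\ref{cor:L0c1}. So there is no ``paper's own proof'' to compare against. Your proposal is a strategy outline rather than a proof, and you yourself flag the main obstruction in step~2. That is an honest assessment and aligns with why the paper states this as a conjecture.

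One further gap in the outline, beyond what you identify: even if step~2 succeeded and produced per-$2$-handle bounds on $|r|$ and $|tb+1|$ in terms of $k=\cL$, your conclusion for Example~\ref{ex:large1} would not follow. The invariants you invoke there---the number of odd diagonal entries in the intersection form, or the number of sphere generators on which $c_1$ is nonzero---are \emph{counts} over the set of $2$-handles, and the number of $2$-handles is not controlled by $\cL$ (a path with $\cL=0$ can have arbitrarily many type~$1$ edges). A bound of the form ``each $2$-handle satisfies $|r|\le f(k)$'' says nothing about how many of them have $r\neq 0$ or odd framing. Your divisibility argument for Example~\ref{ex:large2} survives this objection, since $\gcd(r_1,\dots,r_m)\le\max_i|r_i|$, but for Example~\ref{ex:large1} the divisibility of $c_1$ equals $1$ for every $n$, so that route is closed. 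You would need an invariant that aggregates across $2$-handles in a way both bounded by $\cL$ and unbounded in $n$; the paper's Question~\ref{q:chern} about $c_1(\omega)^2$ points in this direction, but making sense of $c_1^2$ on a manifold with boundary and then bounding it by $\cL$ are both genuine problems.
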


We mention some quantities which appear to impact the Weinstein $\cL$ invariant, which could be explored to find lower bounds.

\begin{question}\label{q:chern}
	Can one prove a lower bound on $\cL(W,\omega,V)$ in terms of the first Chern class, for example $c_1(\omega)^2$?
\end{question}

\begin{question}\label{q:int}
	Can one prove a lower bound on $\cL(W,\omega, V)$ in terms of the intersection form of $W$?
\end{question}

Recall that Weinstein domains with $\cL=0$ have free fundamental group, and thus free $H_1$. For smooth closed 4-manifolds, Asano-Naoe-Ogawa proved a lower bound on the Kirby-Thompson invariant in terms of the order of $H_1$ when it is finite. A similar bound may hold in the Weinstein setting.

\begin{question}\label{q:H1}
	Can one formulate a lower bound on $\cL(W,\omega, V)$ in terms of the number of torsion elements in $H_1(W)$, or based on measurements of non-freeness of $\pi_1(W)$?
\end{question}

\bibliography{References}

\newcommand{\etalchar}[1]{$^{#1}$}
\providecommand{\bysame}{\leavevmode\hbox to3em{\hrulefill}\thinspace}
\providecommand{\MR}{\relax\ifhmode\unskip\space\fi MR }
\providecommand{\MRhref}[2]{%
  \href{http://www.ams.org/mathscinet-getitem?mr=#1}{#2}
}
\providecommand{\href}[2]{#2}
\begin{thebibliography}{BEVHM12}

\bibitem[ABG{\etalchar{+}}23]{ABGKKMP23}
Román Aranda, Sarah Blackwell, Devashi Gulati, Homayun Karimi, Geunyoung Kim,
  Nicholas~Paul Meyer, and Puttipong Pongtanapaisan, \emph{Pants distances of
  knotted surfaces in 4-manifolds}, preprint, arXiv:2307.13874 (2023).

\bibitem[AO01]{AkbulutOzbagci}
Selman Akbulut and Burak Ozbagci, \emph{Lefschetz fibrations on compact {S}tein
  surfaces}, Geom. Topol. \textbf{5} (2001), 319--334. \MR{1825664}

\bibitem[APTZ21]{APTZ21}
Román Aranda, Puttipong Pongtanapaisan, Scott~A. Taylor, and Suixin Zhang,
  \emph{Bounding the kirby-thompson invariant of spun knots}, preprint, to
  appear in Algebr. Geom. Topol., arXiv:2112.02420 (2021).

\bibitem[Bay06]{Baykur06}
R.~\.{I}nan\c{c} Baykur, \emph{K\"{a}hler decomposition of 4-manifolds},
  Algebr. Geom. Topol. \textbf{6} (2006), 1239--1265. \MR{2253445}

\bibitem[BCTT22]{BCTT22}
Ryan Blair, Marion Campisi, Scott~A. Taylor, and Maggy Tomova,
  \emph{Kirby-{T}hompson distance for trisections of knotted surfaces}, J.
  Lond. Math. Soc. (2) \textbf{105} (2022), no.~2, 765--793. \MR{4400936}

\bibitem[BEVHM12]{BEVHM}
Kenneth~L. Baker, John~B. Etnyre, and Jeremy Van Horn-Morris, \emph{Cabling,
  contact structures and mapping class monoids}, J. Differential Geom.
  \textbf{90} (2012), no.~1, 1--80. \MR{2891477}

\bibitem[BW23]{BW}
Vitalijs Brejevs and Andy Wand, \emph{Stein-fillable open books of genus one
  that do not admit positive factorisations}, Math. Res. Lett. \textbf{30}
  (2023), no.~3, 709--719. \MR{4696427}

\bibitem[CIMT21]{CIMT21}
Nickolas~A. Castro, Gabriel Islambouli, Maggie Miller, and Maggy Tomova,
  \emph{The relative {${\mathcal L}$}-invariant of a compact 4-manifold},
  Pacific J. Math. \textbf{315} (2021), no.~2, 305--346. \MR{4366745}

\bibitem[CM19]{CasalsMurphy}
Roger Casals and Emmy Murphy, \emph{Legendrian fronts for affine varieties},
  Duke Math. J. \textbf{168} (2019), no.~2, 225--323. \MR{3909897}

\bibitem[Don99]{Don99}
S.~K. Donaldson, \emph{Lefschetz pencils on symplectic manifolds}, J.
  Differential Geom. \textbf{53} (1999), no.~2, 205--236. \MR{1802722}

\bibitem[Gir02]{Gir02}
Emmanuel Giroux, \emph{G\'{e}om\'{e}trie de contact: de la dimension trois vers
  les dimensions sup\'{e}rieures}, Proceedings of the {I}nternational
  {C}ongress of {M}athematicians, {V}ol. {II} ({B}eijing, 2002), Higher Ed.
  Press, Beijing, 2002, pp.~405--414. \MR{1957051}

\bibitem[GK16]{GayKir16}
David Gay and Robion Kirby, \emph{Trisecting 4--manifolds}, Geom. Topol.
  \textbf{20} (2016), no.~6, 3097--3132. \MR{3590351}

\bibitem[Gom98]{GompfHandlebody}
Robert~E. Gompf, \emph{Handlebody construction of {S}tein surfaces}, Ann. of
  Math. (2) \textbf{148} (1998), no.~2, 619--693. \MR{1668563}

\bibitem[Gom01]{GompfLF}
\bysame, \emph{The topology of symplectic manifolds}, Turkish J. Math.
  \textbf{25} (2001), no.~1, 43--59. \MR{1829078}

\bibitem[GS99]{GompfStipsicz}
Robert~E. Gompf and Andr\'{a}s~I. Stipsicz, \emph{{$4$}-manifolds and {K}irby
  calculus}, Graduate Studies in Mathematics, vol.~20, American Mathematical
  Society, Providence, RI, 1999. \MR{1707327}

\bibitem[Hak68]{Hak68}
Wolfgang Haken, \emph{Some results on surfaces in {$3$}-manifolds}, Studies in
  {M}odern {T}opology, Math. Assoc. Amer. (distributed by Prentice-Hall,
  Englewood Cliffs, N.J.), 1968, pp.~39--98. \MR{0224071}

\bibitem[Hem01]{Hem01}
John Hempel, \emph{3-manifolds as viewed from the curve complex}, Topology
  \textbf{40} (2001), no.~3, 631--657. \MR{1838999}

\bibitem[HT80]{HT80}
A.~Hatcher and W.~Thurston, \emph{A presentation for the mapping class group of
  a closed orientable surface}, Topology \textbf{19} (1980), no.~3, 221--237.
  \MR{579573}

\bibitem[HT22]{HT20}
\bysame, \emph{A presentation for the mapping class group of a closed
  orientable surface}, Collected works of {W}illiam {P}. {T}hurston with
  commentary. {V}ol. {I}. {F}oliations, surfaces and differential geometry,
  Amer. Math. Soc., Providence, RI, [2022] \copyright 2022, Reprint of
  [0579573], pp.~457--473. \MR{4554450}

\bibitem[IN20]{IslNay20}
Gabriel {Islambouli} and Patrick {Naylor}, \emph{{Multisections of
  4-manifolds}}, preprint, to appear in Trans. Amer. Math. Soc. (2020),
  arXiv:2010.03057.

\bibitem[IS23]{IslambouliStarkston}
Gabriel Islambouli and Laura Starkston, \emph{Multisections with divides and
  weinstein 4-manifolds}, preprint, to appear in J. Symplectic Geom.,
  arXiv:2303.00906 (2023).

\bibitem[{Isl}21]{Isl21}
Gabriel {Islambouli}, \emph{{Uniqueness of 4-manifolds described as sequences
  of 3-d handlebodies}}, preprint, to appear in Michigan Math. J. (2021),
  arXiv:2111.08924.

\bibitem[KT18a]{KT18}
Robion Kirby and Abigail Thompson, \emph{A new invariant of 4-manifolds}, Proc.
  Natl. Acad. Sci. USA \textbf{115} (2018), no.~43, 10857--10860. \MR{3871787}

\bibitem[KT18b]{KirTho18}
\bysame, \emph{A new invariant of 4-manifolds}, Proc. Natl. Acad. Sci. USA
  \textbf{115} (2018), no.~43, 10857--10860. \MR{3871787}

\bibitem[LP01]{LoiPie01}
Andrea Loi and Riccardo Piergallini, \emph{Compact {S}tein surfaces with
  boundary as branched covers of {$B^4$}}, Invent. Math. \textbf{143} (2001),
  no.~2, 325--348. \MR{1835390}

\bibitem[Rei33]{Rei33}
Kurt Reidemeister, \emph{Zur dreidimensionalen {T}opologie}, Abh. Math. Sem.
  Univ. Hamburg \textbf{9} (1933), no.~1, 189--194. \MR{3069596}

\bibitem[Sin33]{Sin33}
James Singer, \emph{Three-dimensional manifolds and their {H}eegaard diagrams},
  Trans. Amer. Math. Soc. \textbf{35} (1933), no.~1, 88--111. \MR{1501673}

\bibitem[Tor00]{Tor00}
Ichiro Torisu, \emph{Convex contact structures and fibered links in
  3-manifolds}, Internat. Math. Res. Notices (2000), no.~9, 441--454.
  \MR{1756943}

\bibitem[Waj98]{Waj98}
Bronislaw Wajnryb, \emph{Mapping class group of a handlebody}, Fundamenta
  Mathematicae \textbf{158} (1998), 195--228.

\bibitem[Wal68]{Wald68}
Friedhelm Waldhausen, \emph{Heegaard-{Z}erlegungen der {$3$}-{S}ph\"are},
  Topology \textbf{7} (1968), 195--203. \MR{0227992}

\bibitem[Wan15]{Wand}
Andy Wand, \emph{Factorizations of diffeomorphisms of compact surfaces with
  boundary}, Geom. Topol. \textbf{19} (2015), no.~5, 2407--2464. \MR{3416107}

\bibitem[Wei91]{Wei91}
Alan Weinstein, \emph{Contact surgery and symplectic handlebodies}, Hokkaido
  Math. J. \textbf{20} (1991), no.~2, 241--251. \MR{1114405}

\end{thebibliography}
\bibliographystyle{amsalpha}

\vspace{0.3in}

\end{document}